\numberwithin{equation}{section}
\newtheorem{theorem}{Theorem}[section]
\newtheorem{lemma}[theorem]{Lemma}
\newtheorem{proposition}[theorem]{Proposition}
\newtheorem{corollary}[theorem]{Corollary}
\newtheoremstyle{remarkstyle}
{}{}{}{}{\bfseries}{.}{ }{\thmname{#1}\thmnumber{ #2}\thmnote{ (#3)}}
\theoremstyle{remarkstyle}
\newtheorem{remark}{Remark}[section]
\newtheorem{definition}{Definition}[section]
\newtheorem{observation}{Observation}[section]
\newcommand{\N}{\mathbb N}
\newcommand{\Z}{\mathbb Z}
\newcommand{\R}{\mathbb R}
\newcommand{\C}{\mathbb C}
\newcommand{\Ac}{\mathcal A}
\newcommand{\Bc}{\mathcal B}
\newcommand{\Fc}{\mathcal F}
\newcommand{\Sc}{\mathcal S}
\newcommand{\Vc}{\mathcal V}
\newcommand{\Mcal}{\mathcal M}
\newcommand{\vareps}{\varepsilon}
\newcommand{\scal}[1]{\left\langle #1 \right\rangle}
\DeclareMathOperator*{\loc}{loc}
\DeclareMathOperator*{\rad}{rad}
\DeclareMathOperator*{\opt}{opt}
\DeclareMathOperator*{\rea}{Re}
\DeclareMathOperator*{\ima}{Im}
\DeclareMathOperator*{\sigc}{\sigma_c}
\DeclareMathOperator*{\gamc}{\gamma_c}
\def\({\left(}
\def\){\right)}
\def\<{\left\langle}
\def\>{\right\rangle}
\title[Scattering IBNLS]
{Energy scattering for a class of inhomogeneous biharmonic nonlinear Schr\"odinger equations in low dimensions}
\author[V. D. Dinh]{Van Duong Dinh}
\address[V. D. Dinh]{Ecole Normale Sup\'erieure de Lyon \& CNRS, UMPA (UMR 5669), France}
\email{contact@duongdinh.com}
\author[S. Keraani]{Sahbi Keraani}
\address[S. Keraani]{Laboratoire Paul Painlev\'e UMR 8524, Universit\'e de Lille CNRS, 59655 Villeneuve d'Ascq Cedex, France}
\email{sahbi.keraani@univ-lille.fr}
\subjclass[2010]{35Q55; 35P25}
\keywords{Biharmonic nonlinear Schr\"odinger equation; Inhomogeneous nonlinearity; Global existence; Scattering; Variational analysis; Lorentz spaces}
\begin{document}
	
\begin{abstract}
We consider a class of biharmonic nonlinear Schr\"odinger equations with a focusing inhomogeneous power-type nonlinearity
\[
i\partial_t u -\Delta^2 u+\mu\Delta u +|x|^{-b} |u|^\alpha u=0, \quad \left. u\right|_{t=0}=u_0 \in H^2(\R^d)
\]
with $d\geq 1, \mu\geq 0$, $0<b<\min\{d,4\}$, $\alpha>0$, and $\alpha<\frac{8-2b}{d-4}$ if $d\geq 5$. We first determine a region in which solutions to the equation exist globally in time. We then show that these global-in-time solutions scatter in $H^2(\R^d)$ in three and higher dimensions. In the case of no harmonic perturbation, i.e., $\mu=0$, our result extends the energy scattering proved by Saanouni [Calc. Var. 60 (2021), art. no. 113] and Campos and Guzm\'an [Calc. Var. 61 (2022), art. no. 156] to three and four dimensions. Our energy scattering is new in the presence of a repulsive harmonic perturbation $\mu>0$. The proofs rely on estimates in Lorentz spaces which are properly suited for handling the weight $|x|^{-b}$.   
\end{abstract}
		
\maketitle
	
\section{Introduction}
	\label{S1}
	\setcounter{equation}{0}
	
	Biharmonic (or fourth-order) nonlinear Schr\"odinger equations are equations of the form
	\begin{align} \label{4NLS}
	i \partial_t u - \Delta^2 u+\mu \Delta u = F(x,u) u, \quad (t,x) \in \R\times \R^d
	\end{align} 
	which have been  introduced by Karpman \cite{Karpman} and Karpman--Shagalov \cite{KS} in order to take into consideration the role of small fourth-order dispersion terms in the propagation of laser beams in a bulk medium with Kerr nonlinearity. The function $F(x,u)$ can be understood as a nonlinear potential affected by electron density (see e.g. \cite{Berge}).
	
	\vspace{1mm}
	
	In the case of homogeneous power-type nonlinearity $F(x,u)=\pm|u|^\alpha u$, biharmonic nonlinear Schr\"odinger equations have been studied in many works. In \cite{FIP}, Fibich, Ilan, and Papanicolaou showed the existence of global-in-time $H^2$-solutions and gave numerical simulations supporting the existence of finite time blow-up solutions. Ben-Artzi, Koch, and Saut \cite{BKS} established sharp dispersive estimates for the fourth-order Schr\"odinger operator. Thanks to these estimates, local well-posedness results with $H^2$-initial data have been proved in \cite{Pausader-DPDE} (see also \cite{Dinh-NON, LZ}).  The energy scattering results were investigated in \cite{Pausader-DPDE, Pausader-JFA, Pausader-DCDS, MXZ-JDE-09, MXZ-JDE-11} for the energy-critical nonlinearity, in \cite{PX, Guo, Dinh-NON} for the mass and energy intercritical nonlinearity, and in \cite{PS} for the mass-critical nonlinearity. The existence of finite time blow-up radial $H^2$-solutions was established in \cite{BL, Dinh-NON} (see also a series of works of Fibich et al. \cite{BFM-NON, BFM-SIAM, BF} for numerical analysis of blow-up solutions). Dynamical properties such as mass-concentration and limiting profile of blow-up $H^2$-solutions were studied in \cite{ZZY}. The existence of standing waves as well as their stability/instability were studied in \cite{BCSN, BCGJ, FLNZ}.
	
	\vspace{1mm}
	
	In this paper, we consider the Cauchy problem for biharmonic nonlinear Schr\"odinger equations with a focusing inhomogeneous power-type nonlinearity, namely
	\begin{align} \label{IBNLS}
	\left\{
	\begin{array}{rcl}
	i\partial_t u - \Delta^2 u + \mu \Delta u &=& - |x|^{-b}|u|^\alpha u, \quad (t,x) \in \R \times \R^d, \\
	\left.u\right|_{t=0} &=& u_0 \in H^2(\R^d),
	\end{array}
	\right.
	\end{align}
	where $d\geq 1$, $u:\R \times \R^d \rightarrow \C$, $u_0: \R^d\rightarrow \C$, $\mu\geq 0$, and $\alpha>0$. To our knowledge, the first work addressed this type of equation belongs to Cho, Ozawa, and Wang \cite{COW} where the special case $b=2$ and $\alpha=\frac{4}{d}$ was considered. After proving the existence of local solutions via a regularizing argument and Strichartz estimates, they showed the existence of finite time blow-up solutions with initial data either having finite variance or being radially symmetric. The finite variance and radial symmetry conditions have been removed by the first author in a recent work \cite{Dinh-AML}. Following the influential work \cite{COW}, the inhomogeneous biharmonic nonlinear Schr\"odinger equation has attracted considerable interest from many researchers in the past few years. In \cite{GP-1}, Guzm\'an and Pastor used the contraction mapping argument and Strichartz estimates to prove the local well-posedness (LWP) for \eqref{IBNLS} with $\mu=0$ in dimensions $d\geq 3$. They also obtained conditions for the small data global existence and scattering in the mass and energy intercritical regime. These small data results were recently improved in \cite{GP-2} for dimensions $d=6,7$ and in \cite{CG} for dimension $d=5$. In \cite{LZ}, Liu and Zhang established the local theory for the inhomogeneous biharmonic nonlinear Schr\"odinger equation in Sobolev spaces $H^s(\R^d)$ with $0<s\leq 2$. In particular, their result for $s=2$ improves an earlier LWP of Guzm\'an and Pastor by removing an extra condition on $\alpha$ and extending it to all dimensions $d\geq 1$. The proof in \cite{LZ} relies on a bi-linear Strichartz estimate in Besov spaces. We also mention a recent paper of An, Ryu, and Kim \cite{ARK} for another approach for the local theory using Sobolev-Lorentz spaces. From the above-mentioned works, especially \cite{LZ}, equation \eqref{IBNLS} is known to be locally well-posed. More precisely, for $d\geq 1$, $0<b<\min\left\{\frac{d}{2},4\right\}$, $\alpha>0$, and $\alpha<\frac{8-2b}{d-4}$ if $d\geq 5$, there exist $T^*, T_* \in (0,\infty]$ and a unique solution 
	\[
	u \in C((-T_*,T^*), H^2(\R^d)) \bigcap_{(q,r)\in B} L^q_{\loc}((-T_*,T^*), W^{2,r}(\R^d))
	\]
	to \eqref{IBNLS}, where $(q,r)\in B$ means that $(q,r)$ is a biharmonic admissible pair (see Section \ref{S2} for the definition). The maximal time of existence satisfies the blow-up alternative: if $T^*<\infty$ (resp. $T_*<\infty$), then 
	\[
	\lim_{t\nearrow T^*} \|u(t)\|_{H^2(\R^d)}=\infty \quad \left(\text{resp. } \lim_{t\searrow -T_*} \|u(t)\|_{H^2(\R^d)}=\infty\right).
	\]
	In addition, there are conservation laws of mass and energy
	\[
	M(u(t))= \int |u(t,x)|^2 dx = M(u_0) \tag{Mass}
	\]
	and
	\[
	E_\mu(u(t))= \frac{1}{2}\int |\Delta u(t,x)|^2 dx +\frac{\mu}{2}\int  |\nabla u(t,x)|^2 dx - \frac{1}{\alpha+2} \int |x|^{-b} |u(t,x)|^{\alpha+2} dx = E_\mu(u_0). \tag{Energy}
	\]
	In the case of no harmonic term (i.e., $\mu =0$), equation \eqref{IBNLS} has a scaling invariance
	\[
	u_\lambda(t,x) = \lambda^{\frac{4-b}{\alpha}} u(\lambda^4 t, \lambda x), \quad \lambda>0.
	\] 
	A direct computation shows that this scaling leaves the $\dot{H}^{\gamc}$-norm of initial data invariant, i.e., $\|u_\lambda(0)\|_{\dot{H}^{\gamc}(\R^d)} = \|u_0\|_{\dot{H}^{\gamc}(\R^d)}$, where
	\begin{align}\label{gamc}
	\gamc:= \frac{d}{2}-\frac{4-b}{\alpha}.
	\end{align}
	When $\gamc=0$ or $\alpha=\frac{8-2b}{d}$, equation \eqref{IBNLS} is called mass-critical. When $\gamc=2$ or $\alpha=\frac{8-2b}{d-4}$ and $d\geq 5$, equation \eqref{IBNLS} is called energy-critical. 
	
	\vspace{1mm}
	
	The main purpose of this paper is to show the energy scattering for \eqref{IBNLS} with a mass-supercritical and energy-subcritical nonlinearity, i.e., $0<\gamc<2$. For the reader convenience, we recall the notion of energy scattering as follows: a solution $u(t)$ to \eqref{IBNLS} scatters in $H^2(\R^d)$ forward in time (resp. backward in time) if there exists $u_+ \in H^2(\R^d)$ (resp. $u_- \in H^2(\R^d)$) such that 
	\begin{align}\label{def-scat}
	\lim_{t\to + \infty} \|u(t) - U_\mu(t) u_+\|_{H^2(\R^d)}=0 \quad \left(\text{resp. } \lim_{t\to - \infty} \|u(t) - U_\mu(t) u_-\|_{H^2(\R^d)}=0\right),
	\end{align}
	where 
	\[
	U_\mu(t):= e^{-it(\Delta^2-\mu\Delta)}
	\] 
	is the biharmonic Schr\"odinger propagator. The energy scattering in the mass and energy intercritical regime was first proved by Saanouni \cite{Saanouni} for radial data in dimensions $d\geq5$. The proof is based on an idea of Dodson and Murphy \cite{DM} using a scattering criterion and localized virial estimates of radial solutions. The Dodson--Murphy's approach was the first attempt to give an alternative proof for the radial energy scattering of the cubic nonlinear Schr\"odinger equation without using the concentration-compactness-rigidity method of Kenig and Merle (see \cite{HR}). In \cite{CG}, Campos and Guzm\'an improved Saanouni's result by removing the radial assumption, but still in dimensions $d\geq 5$. Their proof relies on Murphy's idea \cite{Murphy} which is a combination of scattering criterion and Virial-Morawetz type estimates. Here instead of relying on the radial assumption, they exploited the spatial decay at infinity of nonlinearity. As mentioned in \cite{CG}, the restriction $d\geq 5$ comes from the decay of linear biharmonic operator $|t|^{-\frac{d}{4}}$ which is not integrable on $[1,\infty)$ when $d\leq 4$. Therefore, the energy scattering for \eqref{IBNLS} in dimensions $d\leq 4$ was raised as an open problem (see \cite[Remark 1.4]{CG}). We stress that the energy scattering results in \cite{Saanouni, CG} are only for \eqref{IBNLS} with $\mu=0$, i.e., there is no Laplacian perturbation term. Our purpose is twofold. First, we establish the energy scattering for $\mu=0$ in the whole range of intercritical regime in low dimensions $d=3,4$. Second, we extend the energy scattering to all $\mu>0$ and dimensions $d\geq 3$.  
	
	\vspace{1mm}
	
	To put our work in context, we introduce the minimization problem: for $\omega>0$,
	\begin{align} \label{m-mu-omega}
	m_{\mu,\omega}:= \inf \left\{S_{\mu,\omega}(f) : f \in H^2(\R^d)\backslash \{0\}, G_\mu(f)=0\right\},
	\end{align}
	where
	\[
	S_{\mu,\omega}(f):= E_\mu(f)+\frac{\omega}{2} M(f) = \frac{1}{2}\|\Delta f\|^2_{L^2}+\frac{\mu}{2}\|\nabla f\|^2_{L^2} +\frac{\omega}{2}\|f\|^2_{L^2} -\frac{1}{\alpha+2}\int |x|^{-b}|f(x)|^{\alpha+2}dx
	\]
	is the action functional and 
	\[
	G_\mu(f):= 2\|\Delta f\|^2_{L^2}+\mu \|\nabla f\|^2_{L^2} -\frac{d\alpha+2b}{2(\alpha+2)}\int |x|^{-b} |f(x)|^{\alpha+2}dx
	\]
	is the Pohozaev functional. We will see in Proposition \ref{prop-mini-prob} that $m_{\mu,\omega}>0$ and there exists at least a minimizer for $m_{\mu,\omega}$ which solves the elliptic equation
	\begin{align} \label{elli-equa}
	\Delta^2 Q-\mu\Delta Q +\omega Q -|x|^{-b} |Q|^\alpha Q=0.
	\end{align}
	We define the sets
	\begin{align}\label{Ac-omega}
	\begin{aligned}
	\Ac^+_{\mu,\omega}:&= \left\{f\in H^2(\R^d): S_{\mu,\omega}(f)<m_{\mu,\omega}, G_\mu(f)\geq 0\right\}, \\
	\Ac^-_{\mu,\omega}:&= \left\{f\in H^2(\R^d): S_{\mu,\omega}(f)<m_{\mu,\omega}, G_\mu(f)< 0\right\}.
	\end{aligned}
	\end{align}
	Our first result concerns the global existence and space-time estimates with initial data in $\Ac^+_{\mu,\omega}$. 
	\begin{theorem}[\bf Global existence and space-time estimates]   \label{theo-gwp} ~ \\
		Let $d\geq 1$, $\mu\geq 0$, $0<b<\min\left\{\frac{d}{2},4\right\}$, $\alpha>\frac{8-2b}{d}$, $\alpha<\frac{8-2b}{d-4}$ if $d\geq 5$, and $\omega>0$. Let $u_0 \in \Ac^+_{\mu,\omega}$. Then the corresponding solution to \eqref{IBNLS} exists globally in time and satisfies for any time interval $I\subset \R$,
		\begin{align} \label{space-time-est-non-rad}
		\int_I \int_{\R^d} |x|^{-b} |u(t,x)|^{\alpha+2} dx dt \leq C |I|^{\frac{1}{1+\min\{2,b\}}}.
		\end{align}
		In addition, if $u_0$ is radial, then we have for any time interval $I\subset \R$,
		\begin{align} \label{space-time-est-rad}
		\int_I \int_{\R^d} |x|^{-b} |u(t,x)|^{\alpha+2} dx dt \leq C |I|^{\frac{1}{3}}.
		\end{align}
	\end{theorem}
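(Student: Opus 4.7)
The plan is to argue in three linked stages: variational coercivity, global existence via a uniform $H^2$-bound, and a localized virial estimate that supplies the space-time bound.

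First I would establish that the set $\Ac^+_{\mu,\omega}$ is invariant under the flow of \eqref{IBNLS}. Since $S_{\mu,\omega}(u(t)) = E_\mu(u(t)) + \tfrac{\omega}{2} M(u(t))$ is conserved, the condition $S_{\mu,\omega} < m_{\mu,\omega}$ persists; combined with continuity in $t$ and the variational characterization of $m_{\mu,\omega}$ (any nonzero $f$ with $G_\mu(f)=0$ must satisfy $S_{\mu,\omega}(f)\geq m_{\mu,\omega}$), the sign condition $G_\mu(u(t))\geq 0$ is also preserved. The mass-supercriticality $\alpha>\frac{8-2b}{d}$ then provides a quantitative coercivity
\[
G_\mu(u(t)) \geq c\int_{\R^d} |x|^{-b}|u(t,x)|^{\alpha+2}\,dx,
\]
which I would derive by differentiating along the mass-preserving scaling $f_\lambda(x)=e^{d\lambda/2}f(e^\lambda x)$, observing $\tfrac{d}{d\lambda}S_{\mu,\omega}(f_\lambda) = G_\mu(f_\lambda)$, and comparing $S_{\mu,\omega}(u(t))$ with its value at the unique $\lambda_0>0$ where $G_\mu((u(t))_{\lambda_0})=0$. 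From $G_\mu(u(t))\geq 0$, energy conservation gives a uniform bound on $\|\Delta u(t)\|_{L^2}^2 + \mu\|\nabla u(t)\|_{L^2}^2$; with mass conservation this controls $\|u(t)\|_{H^2}$ uniformly, so the blow-up alternative from \cite{LZ} yields $T_*=T^*=\infty$.

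For the space-time estimate I introduce a smooth radial cut-off $\phi_R(x)=R^2\phi(x/R)$ with $\phi(y)=|y|^2$ on $\{|y|\leq 1\}$ and bounded derivatives, and set $V_R(t):=\int \phi_R(x)|u(t,x)|^2\,dx$. A careful computation using the equation (in the spirit of the biharmonic virial identities in \cite{BL, Dinh-NON}) gives
\[
V_R''(t) = 8\,G_\mu(u(t)) + \mathcal{E}_R(t),
\]
where $\mathcal{E}_R(t)$ is supported in $\{|x|\geq R\}$ and collects the contributions arising from $\nabla\phi_R-2x$, $\nabla^2\phi_R-2I$, $\Delta\phi_R-2d$, $\Delta^2\phi_R$, and the nonlinearity. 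Using the uniform $H^2$-bound, Sobolev embedding $H^2\hookrightarrow L^{\alpha+2}$, and $|x|^{-b}\leq R^{-b}$ on $\{|x|\geq R\}$, one obtains $|\mathcal{E}_R(t)|\leq CR^{-\min\{2,b\}}$ uniformly in $t$, while $|V_R'(t)|\leq CR$. Integrating the virial identity over $I$ and applying the coercivity yields
\[
c\int_I\int_{\R^d}|x|^{-b}|u|^{\alpha+2}\,dx\,dt \;\leq\; \int_I G_\mu(u(t))\,dt \;\leq\; C\bigl(R+|I|R^{-\min\{2,b\}}\bigr),
\]
and optimizing $R=|I|^{1/(1+\min\{2,b\})}$ gives \eqref{space-time-est-non-rad}.

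In the radial case, the radial Sobolev decay $|u(x)|\leq C|x|^{-(d-1)/2}\|u\|_{H^1}$ on $\{|x|\geq 1\}$ sharpens the nonlinearity tail to $\int_{|x|\geq R}|x|^{-b}|u|^{\alpha+2}\,dx\leq CR^{-b-(d-1)\alpha/2}$, and the mass-supercritical hypothesis forces this exponent to exceed $2$ in the dimensions of interest. Hence the dominant error becomes the cut-off term of order $R^{-2}$, and optimizing $R=|I|^{1/3}$ in $R+|I|R^{-2}$ produces \eqref{space-time-est-rad}. The main obstacle I anticipate is the precise derivation of the biharmonic virial expansion and the bookkeeping of error terms: the fourth-order operator produces contributions from $\Delta^2\phi_R$, $\nabla\Delta\phi_R$, and $|\nabla^2\phi_R|^2$ which must be shown to localize to $\{|x|\geq R\}$ and all obey the sharp $R^{-2}$ decay, without leaking factors that would spoil the final exponent. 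A secondary subtlety is the quantitative coercivity, which must supply a strictly positive constant $c$ uniformly in $t$ despite $G_\mu(u(t))$ potentially vanishing as $u$ disperses.
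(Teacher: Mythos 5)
Your Step 1 (invariance of $\Ac^+_{\mu,\omega}$, uniform $H^2$-bound, global existence) and your coercivity statement $G_\mu(u(t))\geq c\int|x|^{-b}|u(t,x)|^{\alpha+2}\,dx$ match the paper. The gap lies in the virial step. You claim a decomposition
\[
V_R''(t) = 8\,G_\mu(u(t)) + \mathcal{E}_R(t), \qquad |\mathcal{E}_R(t)|\leq CR^{-\min\{2,b\}},
\]
but this error bound cannot hold. The exterior contributions to $\mathcal{E}_R$ include kinetic-energy terms of the schematic form $\int_{|x|\geq R}\bigl(\varphi''_R - 2\bigr)\,|\partial^2 u|^2\,dx$ (and the analogous $\mu|\nabla u|^2$ term), which are bounded only by the uniform $H^2$-norm of $u$; they carry no extra power of $R^{-1}$. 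The weight $|x|^{-b}$ gives decay only for the nonlinear tail, not for the $|\Delta u|^2$ and $|\nabla u|^2$ tails. These exterior kinetic terms enter with the sign that makes $\mathcal{E}_R(t)\leq O(R^{-\min\{2,b\}})$ as a one-sided estimate, i.e.\ $V_R''(t) \leq 8\,G_\mu(u(t)) + O(R^{-\min\{2,b\}})$. Integrating over $I$ then yields a \emph{lower} bound on $\int_I G_\mu(u(t))\,dt$, which is the wrong direction for \eqref{space-time-est-non-rad}. So your chain $c\int_I\int|x|^{-b}|u|^{\alpha+2}\leq\int_I G_\mu(u)\leq CR + C|I|R^{-\min\{2,b\}}$ does not follow from the virial identity.

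The paper resolves this by keeping the favorable sign: from the localized virial derivative one obtains
\[
\frac{d}{dt}\Mcal_{\varphi_R}(t) \geq 4\,G_\mu(\chi_R u(t)) + O(R^{-2}+R^{-b}),
\]
where $\chi_R$ is a sharp cutoff — the exterior kinetic terms are subtracted, not added, when passing from $\int_{|x|\leq R}|\Delta u|^2$ to $\int|\Delta(\chi_R u)|^2$, so they help rather than hurt. But applying the coercivity of Proposition~\ref{prop-coer} to $\chi_R u(t)$ requires knowing $\chi_R u(t)\in\Ac^+_{\mu,\omega}$, and this is not automatic: it is the content of the paper's Step~2, which (i) shows $S_{\mu,\omega}(\chi_R u(t))<m_{\mu,\omega}-\delta/2$ for $R\geq R_0$ using $O(R^{-2}+R^{-b})$ errors and the strict gap $\delta=m_{\mu,\omega}-S_{\mu,\omega}(u_0)>0$, and (ii) shows $G_\mu(\chi_R u(t))\geq 0$ via a connectedness/openness argument (Observation~\ref{obse-2}), using that $G_\mu(\chi_R u(t))\to G_\mu(u(t))>0$ as $R\to\infty$. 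This entire step — showing the truncated solution stays in $\Ac^+_{\mu,\omega}$ — is absent from your proposal, and it is precisely what is needed to convert the localized virial bound into the space-time estimate. The radial refinement you sketch is fine for the nonlinear tail, but it rests on the same missing step.

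Your derivation of the coercivity via the mass-critical scaling $f_\lambda = e^{d\lambda/2}f(e^{\lambda}\cdot)$ is a valid reparametrization of the paper's argument, and the endgame (optimize $R=|I|^{1/(1+\min\{2,b\})}$, treat $|I|$ small separately by the $H^2$-bound) also matches the paper.
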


	\begin{remark}
		Solutions to \eqref{IBNLS} with initial data in $\Ac^-_{\mu,\omega}$ have different behaviors compared to those in $\Ac^+_{\mu,\omega}$. It will be investigated in a forthcoming work. 
	\end{remark}
	
	Our second result deals with the energy scattering for global-in-time solutions obtained in Theorem \ref{theo-gwp}.
	\begin{theorem}[\bf Energy scattering] \label{theo-scat} ~ \\ 
		Let $u \in C(\R, H^2(\R^d))$ be the global-in-time solution obtained in Theorem \ref{theo-gwp}. Assume in addition that one of the following conditions is satisfied:
		\begin{itemize} [leftmargin=6mm]
			\item[(1)] $d\geq 5$; or $d=4$ and $1<b<2$;
			\item[(2)] $d=4$ and $0<b\leq 1$; or $d=3$ and $0<b<\frac{3}{2}$; and $u_0$ is radial.
		\end{itemize}
		Then the solution scatters in $H^2(\R^d)$ in both directions. 
	\end{theorem}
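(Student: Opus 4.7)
The overall strategy follows the Dodson-Murphy/Campos-Guzm\'an roadmap: I would combine a perturbative scattering criterion with the global Morawetz-type estimate \eqref{space-time-est-non-rad} (or \eqref{space-time-est-rad} in the radial case) coming from Theorem \ref{theo-gwp}. The main technical novelty needed to reach $d=3,4$ is to run the whole bootstrap in Lorentz-based Strichartz spaces adapted to $U_\mu(t)$, which allows the weight $|x|^{-b}\in L^{d/b,\infty}(\R^d)$ to be absorbed as a bounded multiplier rather than requiring the dispersive decay $|t|^{-d/4}$ to be time-integrable.

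The first step is a \emph{scattering criterion}: there exist $\eta>0$ and $L>0$, depending only on $M(u_0)$ and $E_\mu(u_0)$, such that if on some interval $[a,b]$ of length at least $L$ one has
\[
\int_a^b \int_{\R^d} |x|^{-b}|u(t,x)|^{\alpha+2}\,dx\,dt \leq \eta,
\]
then $u$ scatters forward in time in $H^2(\R^d)$. The proof is a Duhamel/continuity argument using Strichartz estimates in Lorentz spaces, coupled with the uniform $H^2$-bound that is already built into $\Ac^+_{\mu,\omega}$ via Proposition \ref{prop-mini-prob} and the energy coercivity used in the proof of Theorem \ref{theo-gwp}. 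The second step is \emph{verification} of the criterion: partition $[0,T]$ into $N\sim T/L$ equal subintervals. By the pigeonhole principle and \eqref{space-time-est-non-rad} at least one subinterval $[a,b]$ satisfies $b-a\geq L$ and
\[
\int_a^b \int_{\R^d} |x|^{-b}|u|^{\alpha+2}\,dx\,dt \leq \frac{C\,T^{1/(1+\min\{2,b\})}}{N} \lesssim L\,T^{1/(1+\min\{2,b\})-1},
\]
which can be made smaller than $\eta$ by taking $T$ large enough, since the exponent $1/(1+\min\{2,b\})-1$ is strictly negative. The criterion then yields forward-in-time scattering, and backward scattering follows by time reversal. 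In the radial case one uses \eqref{space-time-est-rad} with exponent $1/3$ in place of $1/(1+\min\{2,b\})$, which relaxes the dimensional constraints.

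The main obstacle lies in making the scattering criterion work in low dimensions. The linear decay $\|U_\mu(t)f\|_{L^\infty}\lesssim |t|^{-d/4}\|f\|_{L^1}$ fails to be time-integrable for $d\leq 4$, so the classical $L^q_t L^r_x$ Strichartz framework used by Campos and Guzm\'an \cite{CG} for $d\geq 5$ does not close the Duhamel iteration with the weight $|x|^{-b}$. I would replace it with Lorentz-refined Strichartz and inhomogeneous estimates, applying H\"older's inequality in Lorentz spaces to pair $|x|^{-b}\in L^{d/b,\infty}$ with a companion space on the Strichartz side. In the non-radial case this compensation is tight enough to close exactly when $d=4$ and $1<b<2$, which matches condition (1); for $d=4$ with $0<b\leq 1$ or $d=3$ with $0<b<\frac{3}{2}$ the extra room must be purchased by Strauss-type radial Sobolev decay away from the origin, explaining the radiality assumption in condition (2). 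The mass-supercritical hypothesis $\alpha>\frac{8-2b}{d}$ enters precisely at this point, guaranteeing that the Lorentz-H\"older exponents produced by the nonlinear multiplier estimates sum correctly and that the small-data threshold is compatible with the uniform $H^2$-bound on $\Ac^+_{\mu,\omega}$.
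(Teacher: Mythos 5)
The overall skeleton you describe (scattering condition $+$ Morawetz/virial space--time bound $+$ pigeonhole on a time window) is the right one, but two genuine gaps keep your proposal from being a proof, and both are precisely where the hypotheses (1)--(2) of the theorem actually bite.

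First, your proposed scattering criterion is not a correct lemma as stated. Knowing that $\int_a^b\int|x|^{-b}|u|^{\alpha+2}\,dx\,dt\leq\eta$ on one subinterval of length $L$ does not by itself yield scattering: one still has to control the Duhamel contribution from times before $a$. The criterion the paper actually uses is Lemma \ref{lem-scat-cond}: scattering follows from $\|u\|_{L^k_t(\R,L^{r,2}_x)}<\infty$ with $(k,r)$ as in \eqref{qrkm}. The verification then writes the nonlinear Duhamel integral up to a chosen $t_1$ as a \emph{distant past} $F_1=\int_a^{t_0}U_\mu(t-\tau)(\cdots)\,d\tau$ plus a \emph{recent past} $F_2=\int_{t_0}^{t_1}U_\mu(t-\tau)(\cdots)\,d\tau$. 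Your pigeonhole idea handles $F_2$, but the hard part is $F_1$, and it is entirely absent from your argument. For $d\geq 5$ one bounds $F_1$ using the integrable dispersive decay $(t-\tau)^{-1-2\theta}$ in a Lorentz pair, getting a factor $(t_1-t_0)^{-\theta}$. For $d=3,4$ the decay is subintegrable, so the paper instead combines the pointwise dispersive decay with the nonlinear estimates \eqref{non-est-5}, \eqref{non-est-6}, which feed the Morawetz integral \emph{over the whole distant past} $[a,t_0]$ back in, producing a competing growth factor $T^{\varrho/2}$ (resp. $T^{5\varrho/8}$). The conditions $d=4,\,1<b<2$ (so $\varrho=\tfrac1{1+b}<\tfrac12$) and radial in low dimensions (so $\varrho=\tfrac13$) are exactly the conditions needed to make the decay from the gap $t_1-t_0$ beat this $T^\varrho$ growth. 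Attributing the restrictions to "Lorentz--H\"older tightness" or to radial Sobolev decay inside the Duhamel estimate mislocates the obstruction: the Lorentz--H\"older absorption of $|x|^{-b}$ works in every dimension, and the radial Strauss bound is used earlier, in the proof of the improved space--time estimate \eqref{space-time-est-rad}, not in the scattering step.

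Second, your pigeonhole window must grow with $T$: the paper finds $t_0$ so that the Morawetz integral on $[t_0,t_0+\vareps T^{1-\varrho}]$ is $\lesssim\vareps$, and sets $t_1=t_0+\vareps T^{1-\varrho}$. The width $\vareps T^{1-\varrho}$ is what makes the gap $t_1-t_0$ large, which in turn makes the distant-past bound (e.g., $(\vareps T^{1-\varrho})^{-\theta}$ when $d\geq 5$, or $(\vareps T^{1-\varrho-\varrho/(1-6\theta)})^{-(1-6\theta)\nu/2}$ when $d=4$) small. With a fixed-length window $L$, as in your pigeonhole over $[0,T]$, the gap $t_1-t_0$ is bounded, those factors never become small, and the argument cannot close. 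So both the formulation of the criterion and the choice of window length need to be revised; once they are, you will see the dimensional/radiality conditions emerging exactly from the distant-past estimate, as in \eqref{est-T-1}--\eqref{est-T-3} and \eqref{choi-T}.
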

	
	\begin{remark} ~ 
		\begin{itemize}[leftmargin=6mm]
			\item[(1)] When $\mu=0$, the threshold for energy scattering in \cite{Saanouni, CG} was determined in terms of mass and energy of the ground state solution to 
			\[
			\Delta^2Q - Q-|x|^{-b}|Q|^{\alpha} Q=0.
			\]
			In Appendix \ref{appendix}, we prove that the threshold given by $\Ac^+_{0,\omega}$ and the one in \cite{Saanouni,CG} are identical. Therefore, our result is indeed an extension of those in \cite{Saanouni, CG} to low dimensions.
			\item[(2)] When $\mu>0$, there are no energy scattering result available according to our knowledge. Hence our result is new in this case.
			\item[(3)] The condition $\mu\geq 0$ is essential for the energy scattering since for $\mu<0$, Strichartz estimates for $U_\mu(t)$ only hold locally in time.
			\item[(4)] When $d=4$ and $0<b\leq 1$; or $d=3$ and $0<b<\frac{3}{2}$, the growth of non-radial space-time estimate \eqref{space-time-est-non-rad} is too strong in comparison to the decay $|t|^{-\frac{d}{4}}$ of the linear operator and we are not able to show the smallness of the distance past (see below). The problem is resolved if we restrict to radial solutions. The non-radial energy scattering for $d=4$ and $0<b\leq 1$ as well as for $d=1, 2, 3$ still remains an open question.
		\end{itemize} 
	\end{remark}
	
	Let us briefly describe the strategy of our proofs. To prove Theorem \ref{theo-gwp}, we first study the variational problem $m_{\mu,\omega}$. By making use of the spatial decay of the weight $|x|^{-b}$, we show a compact embedding $H^2(\R^d) \hookrightarrow L^{\alpha+2}(|x|^{-b} dx)$ for $d\geq 1$, $0<b<\min\{d,4\}$, $\alpha>0$, and $\alpha<\frac{8-2b}{d-4}$ if $d\geq 5$. This together with some variational arguments, we are able to show the existence of minimizers for $m_{\mu,\omega}$ (see Proposition \ref{prop-mini-prob}). From this, we deduce that the set $\Ac^+_{\mu,\omega}$ is invariant under the flow of \eqref{IBNLS}. Moreover, solutions to \eqref{IBNLS} with initial data in $\Ac^+_{\mu,\omega}$ enjoy a uniform bound of $H^2$-norm which together with the blow-up alternative yield the global existence. The proof of space-time estimates \eqref{space-time-est-non-rad} and \eqref{space-time-est-rad} are more involved. The first step is to prove a coercivity property of $\Ac^+_{\mu,\omega}$ which is stated in Proposition \ref{prop-coer}. The second step is to show that for solutions $u(t)$ with initial data in $\Ac^+_{\mu,\omega}$, the multiplication $\chi_R u(t)$ still remains in $\Ac^+_{\mu,\omega}$ for large $R$, where $\chi_R$ is a smooth cutoff which equals to 1 on the ball centered at zero and of radius $R/2$. This allows us to exhibit the coercivity property for $\chi_R u(t)$ for $R$ large, namely
	\[
	G_\mu(\chi_R u(t)) \geq C \int_{\R^d} |x|^{-b} |\chi_R(x)u(t,x)|^{\alpha+2}dx, \quad \forall t\in \R.
	\] 
	The last step is to perform a localized virial estimate and insert the cutoff into Pohozaev functional. Here we make intensive use of the decay property of $|x|^{-b}$ outside a large ball.  
	
	\vspace{1mm}
	
	The proof of Theorem \ref{theo-scat} is inspired by an idea of Arora, Dodson, and Murphy \cite{ADM} which is outlined as follows. After some reductions, the problem is reduced to prove that for $\vareps>0$ small, there exist $T=T(\vareps)>0$ sufficiently large and $t_1 \in (0,T)$ so that
	\begin{align} \label{est-T-intro}
	\left\|\int_0^{t_1} U_\mu(t-\tau) (|x|^{-b} |u|^\alpha u)(\tau) d\tau\right\|_{L^k([t_1, +\infty), L^{r,2}(\R^d))} \leq C \vareps^\gamma
	\end{align}
	for a suitable pair $(k,r)$ and some $\gamma>0$, where $L^{r,2}(\R^d)$ stands for the Lorentz space (see Section \ref{S2} for the definition). To show this, we first use the space-time estimates to find $t_0 \in \left[\frac{T}{4},\frac{T}{2}\right]$ so that
	\begin{align} \label{est-t0-intro}
	\int_{t_0}^{t_0+\vareps T^{1-\varrho}} \int_{\R^d} |x|^{-b} |u(t,x)|^{\alpha+2} dx dt \leq C\vareps,
	\end{align}
	where $\varrho = \frac{1}{1+\min\{2,b\}}$ for general solutions and $\varrho = \frac{1}{3}$ for radial solutions. By choosing $t_1=t_0+\vareps T^{1-\varrho}$, we split the integral in the left hand side of \eqref{est-T-intro} into two parts: the distant past
	\[
	F_1(t) = \int_0^{t_0} U_\mu(t-\tau) (|x|^{-b} |u|^\alpha u)(\tau) d\tau
	\]
	and the recent past
	\[
	F_2(t) = \int_{t_0}^{t_1} U_\mu(t-\tau) (|x|^{-b} |u|^\alpha u)(\tau) d\tau.
	\]
	To estimate the distant past, we use dispersive estimates in Lorentz spaces. For $d\geq 5$, the decay rate $(t-\tau)^{-\frac{d}{4}}$ is integrable for $\tau \in [t_1,+\infty)$ which allows us to pick $T=T(\vareps)$ large to ensure the smallness of the distant past. For $d=3,4$, the above decay is no longer integrable on $[t_1,+\infty)$. Thus we have to find suitable nonlinear estimates to control this decay in higher Lebesgue norm on $[t_1, +\infty)$. This requires the growth of space-time estimate is not too big. In particular, we need $\varrho<\frac{1}{2}$ in four dimensions which leads to the restriction $1<b<2$ for the non-radial scattering. In three dimensions, we need $\varrho$ even smaller which prevents us to show the same non-radial scattering for $0<b<\frac{3}{2}$. However, for radial solutions, the growth $\varrho=\frac{1}{3}$ is enough to make the distant past small. To estimate the recent past, we prove a nonlinear estimate which allows us to insert the smallness \eqref{est-t0-intro} to control the recent past.  
	
	To make the above argument possible, we provide some new features which are elaborated as follows. First, instead of working on Lebesgue spaces where nonlinear estimates were proved by considering separately two regions inside and outside a ball (see \cite{GP-1,GP-2}), we use Lorentz spaces which are well suitable for estimating the weighted term $|x|^{-b}$. By using these spaces, we are able to prove nonlinear estimates in a unified way (see Lemmas \ref{lem-non-est-1} and \ref{lem-non-est-2}) which contains much simpler proofs than those in \cite{GP-1, GP-2}. In the context of inhomogeneous nonlinear Schr\"odinger equation, Lorentz spaces were first used in \cite{AT} to show the local well-posedness in Sobolev spaces. These Lorentz spaces were also exploited recently in \cite{ARK} to prove the local theory for inhomogeneous fourth-order Schr\"odinger equation. Second, we prove Strichartz estimates for the biharmonic Schr\"odinger operator in Lorentz spaces. By interpolating between the $L^2$-isometry and the $L^1-L^\infty$ estimate of the linear operator, we obtain dispersive estimates in Lorentz spaces. The homogeneous and inhomogeneous Strichartz estimates follow from an abstract argument of Keel and Tao \cite{KT}. However, to treat lower dimensions, we also need Strichartz estimates with a gain of derivatives in Lorentz spaces which are proved in Lemma \ref{lem-str-est-gain}. 
	
	Finally, we remark that the argument developed in this paper can be applied to show the energy scattering for the defocusing inhomogeneous biharmonic nonlinear Schr\"odinger equations IBNLS (i.e., the plus sign in front of the nonlinearity in \eqref{IBNLS}). In particular, we have the following result.
	
	\begin{proposition}[\bf Energy scattering for defocusing IBNLS] \label{prop-scat-defo} ~\\
		Let $d\geq 1$, $\mu\geq 0$, $0<b<\min\left\{\frac{d}{2},4\right\}$, $\alpha>\frac{8-2b}{d}$, $\alpha<\frac{8-2b}{d-4}$ if $d\geq 5$, and $u_0\in H^2(\R^d)$. Then the corresponding solution to the defocusing IBNLS exists globally in time. Assume in addition that one of the following conditions is satisfied:
		\begin{itemize} [leftmargin=6mm]
			\item[(1)] $d\geq 5$; or $d=4$ and $1<b<2$;
			\item[(2)] $d=4$ and $0<b\leq 1$; or $d=3$ and $0<b<\frac{3}{2}$; and $u_0$ is radial.
		\end{itemize}
		Then the solution scatters in $H^2(\R^d)$ in both directions. 
	\end{proposition}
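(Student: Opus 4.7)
The plan is to imitate the proofs of Theorems \ref{theo-gwp} and \ref{theo-scat} with the considerable simplification that, in the defocusing case, the variational machinery built around $m_{\mu,\omega}$ is not needed: the sign of the nonlinearity already guarantees that every conserved quantity is coercive. Write the defocusing energy as
\[
E^+_\mu(u)=\frac12\|\Delta u\|_{L^2}^2+\frac{\mu}{2}\|\nabla u\|_{L^2}^2+\frac{1}{\alpha+2}\int|x|^{-b}|u|^{\alpha+2}dx,
\]
which is conserved and nonnegative. Combined with mass conservation this yields a uniform $H^2$-bound for any $H^2$-solution, so the blow-up alternative recalled in the introduction forces global existence for every $u_0\in H^2(\R^d)$, without any size restriction.

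For the space-time estimate, I would reproduce the localized virial argument used for Theorem \ref{theo-gwp}, based on the weighted quantity $\int a(x)|u(t,x)|^2 dx$ with $a$ a suitable truncation of $|x|^2$. The key simplification is that the defocusing Pohozaev functional
\[
G_\mu^+(f)=2\|\Delta f\|_{L^2}^2+\mu\|\nabla f\|_{L^2}^2+\frac{d\alpha+2b}{2(\alpha+2)}\int|x|^{-b}|f|^{\alpha+2}dx
\]
is manifestly positive and, trivially, satisfies the analogue of Proposition \ref{prop-coer}, namely $G_\mu^+(f)\geq C\int|x|^{-b}|f|^{\alpha+2}dx$. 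Consequently, once the cutoff errors produced by inserting $\chi_R u(t)$ into the virial identity are handled exactly as in Theorem \ref{theo-gwp} (using the decay of $|x|^{-b}$ outside a large ball and the uniform $H^2$-bound above), one obtains
\[
\int_I\int_{\R^d}|x|^{-b}|u(t,x)|^{\alpha+2}dxdt\leq C|I|^{\frac{1}{1+\min\{2,b\}}}
\]
in general and the improved bound $C|I|^{1/3}$ under the radial assumption.

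With these two estimates in hand, the scattering statement is obtained by running verbatim the Arora--Dodson--Murphy scheme used for Theorem \ref{theo-scat}. The reduction to \eqref{est-T-intro} and the mean-value choice \eqref{est-t0-intro} do not depend on the sign of the nonlinearity, and the decomposition into distant past $F_1$ and recent past $F_2$ is estimated using the Strichartz and dispersive estimates in Lorentz spaces developed in the paper, together with the nonlinear estimates of Lemmas \ref{lem-non-est-1} and \ref{lem-non-est-2}, which again are insensitive to the sign of the nonlinearity and only exploit $|x|^{-b}|u|^\alpha u$ as a function of $u$.

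The only delicate point, and the genuine obstacle, is the same one encountered in the focusing setting: in low dimensions the dispersive decay $|t|^{-d/4}$ is not integrable, so the argument for $F_1$ requires the growth exponent $\varrho$ of the space-time bound to be small enough relative to $d$. This forces the dichotomy in the statement, exactly as in Theorem \ref{theo-scat}: for $d\geq 5$, and for $d=4$ with $1<b<2$, the non-radial space-time estimate with $\varrho=1/(1+\min\{2,b\})$ suffices; for $d=4$ with $0<b\leq 1$ and for $d=3$ with $0<b<3/2$, only the radial improved growth $\varrho=1/3$ makes the distant past small, whence the radial hypothesis. No new ideas are needed beyond those developed for the focusing problem; the defocusing sign merely removes the variational prerequisites of Theorem \ref{theo-gwp}.
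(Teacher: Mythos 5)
Your proposal is correct and follows essentially the same route as the paper: global existence from the conserved, manifestly coercive defocusing energy together with mass, the space-time estimates from the localized virial argument of Step~3 of Theorem~\ref{theo-gwp} with the trivially positive defocusing Pohozaev functional replacing the variational coercivity of Proposition~\ref{prop-coer}, and scattering via the Arora--Dodson--Murphy scheme of Theorem~\ref{theo-scat}. The paper's own proof is a three-sentence version of exactly this reduction, and the dichotomy in hypotheses (1)--(2) comes from the same low-dimensional non-integrability of the dispersive decay that you identify.
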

	
	\begin{remark}
	Proposition \ref{prop-scat-defo} improves a previous work of Saanouni \cite{Saanouni-PA} where the energy scattering for the defocusing IBNLS with $\mu=0$ was proved for radial solutions in dimensions $d\geq 5$.
	\end{remark}

	The paper is organized as follows. In Section \ref{S2}, we first recall the definition as well as basic properties of Lorentz spaces. We then prove dispersive and Strichartz estimates in Lorentz spaces. In Section \ref{S3}, we show some nonlinear estimates which are needed in our analysis. In Section \ref{S4}, we study the variational problem $m_{\mu,\omega}$ and establish the coercivity property of the set $\Ac^+_{\mu,\omega}$. Section \ref{S5} is devoted to the proofs of our main theorems.  Finally, an equivalent characterization of thresholds will be given in Appendix \ref{appendix}.

\section{Preliminary}
\label{S2}
\setcounter{equation}{0}

\subsection{Lorentz spaces}
Let $f$ be a measurable function on $\R^d$. The distribution function of $f$ is defined by
\[
d_f(\lambda):= |\{x\in \R^d : |f(x)|>\lambda\}|, \quad \lambda>0,
\]
where $|A|$ is the Lebesgue measure of a set $A$ in $\R^d$. The decreasing rearrangement of $f$ is defined by
\[
f^*(s):= \inf \left\{ \lambda>0 : d_f(\lambda)\leq s\right\}, \quad s>0.
\]

\begin{definition}[\bf Lorentz spaces] ~\\
	Let $0<r<\infty$ and $0<\rho\leq \infty$. The Lorentz space $L^{r,\rho}(\R^d)$ is defined by
	\[
	L^{r,\rho}(\R^d):= \left\{ f \text{ measurable on } \R^d : \|f\|_{L^{r,\rho}}<\infty\right\},
	\]
	where
	\[
	\|f\|_{L^{r,\rho}}:= \left\{
	\begin{array}{cl}
	\( \frac{\rho}{r} \mathlarger{\int}_0^\infty \(s^{1/r} f^*(s)\)^\rho \frac{1}{s}ds\)^{1/\rho} &\text{ if } \rho <\infty, \\
	\sup_{s>0} s^{1/r} f^*(s) &\text{ if } \rho=\infty.
	\end{array}
	\right.
	\]
\end{definition}

We collect the following basic properties of $L^{r,\rho}(\R^d)$ in the following lemma (see e.g., \cite{Grafakos}).
\begin{lemma}[\bf Properties of Lorentz spaces] ~ 
	\begin{itemize}[leftmargin=6mm]
		\item For $1<r<\infty$, $L^{r,r}(\R^d) \equiv L^r(\R^d)$ and by convention, $L^{\infty,\infty}(\R^d)= L^\infty(\R^d)$.
		\item For $0<r<\infty$ and $0<\rho \leq \infty$, $L^{r,\rho}(\R^d)$ is a quasi-Banach space.
		\item For $0<r<\infty$ and $0<\rho_1<\rho_2\leq \infty$, $L^{r,\rho_1}(\R^d)\subset L^{r,\rho_2}(\R^d)$ and there exists $C>0$ such that $\|f\|_{L^{r,\rho_2}}\leq C\|f\|_{L^{r,\rho_1}}$ for all $f\in L^{r,\rho_1}(\R^d)$.
		\item For $0<r<\infty$, $0<\rho \leq \infty$, and $\theta>0$, $\||f|^\theta\|_{L^{r,\rho}} = \|f\|^\theta_{L^{\theta r, \theta \rho}}$.
		\item For $1<r<\infty$ and $1\leq \rho \leq \infty$, $L^{r,\rho}(\R^d)$ can be normed to become a Banach space. More precisely, $(L^{r,\rho}(\R^d),\|\cdot\|^*_{L^{r,\rho}})$ is a Banach space with
		\[
		\|f\|^*_{L^{r,\rho}}:= \left\{
		\begin{array}{cl}
		\(\frac{\rho}{r}\mathlarger{\int}_0^\infty \(s^{1/r} f^{**}(s)\)^\rho \frac{1}{s}ds\)^{1/\rho} &\text{ if } \rho <\infty, \\
		\sup_{s>0} s^{1/r}f^{**}(s) &\text{ if } \rho=\infty,
		\end{array}
		\right.
		\]
		where
		\[
		f^{**}(t):= \frac{1}{t} \int_0^t f^*(s)ds, \quad t>0.
		\]
		Moreover, we have the norm equivalence
		\[
		\|f\|_{L^{r,\rho}}\leq \|f\|^*_{L^{r,\rho}}\leq \frac{r}{r-1} \|f\|_{L^{r,\rho}}.
		\]
		\item For $b>0$, $|x|^{-b} \in L^{\frac{d}{b},\infty}(\R^d)$ and $\||x|^{-b}\|_{L^{\frac{d}{b},\infty}} = |B(0,1)|^{\frac{b}{d}}$, where $B(0,1)$ is the unit ball of $\R^d$.
	\end{itemize}
\end{lemma}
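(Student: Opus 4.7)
The plan is to verify each bullet in turn by working directly from the definitions of the distribution function $d_f$ and the decreasing rearrangement $f^*$, essentially recording the standard proofs found in Grafakos. Throughout I will use the basic identities $d_f(\lambda) = |\{|f|>\lambda\}|$, $f^*$ is right-continuous and non-increasing on $(0,\infty)$, and $f$ and $f^*$ are equidistributed, so $\int_0^\infty \varphi(f^*(s))\,ds = \int_{\R^d} \varphi(|f(x)|)\,dx$ for any non-negative Borel $\varphi$.

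For the first bullet, the change of variables $s = d_f(\lambda)$ turns $\|f\|^r_{L^{r,r}} = \int_0^\infty s^{r/r} f^*(s)^r \,\frac{ds}{s}$ into $r\int_0^\infty \lambda^{r-1} d_f(\lambda)\,d\lambda$, which is exactly $\|f\|_{L^r}^r$ by the layer-cake formula; the $\rho = \infty$ convention is tautological. For the power rule, one checks that $(|f|^\theta)^*(s) = (f^*(s))^\theta$ from the definition of $d_{|f|^\theta}$ and then substitutes into the $L^{r,\rho}$ expression; a change of variable $s \mapsto s^{1/\theta}$ (or a direct algebraic rearrangement in the $\rho = \infty$ case) yields the claimed identity. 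The nesting property follows from the pointwise bound $s^{1/r} f^*(s) \le C_{\rho_1} \|f\|_{L^{r,\rho_1}}$ obtained by using that $f^*$ is non-increasing to minorize the defining integral on $(0,s]$, and then reinserting this bound into the outer integral for $\|f\|_{L^{r,\rho_2}}$ with the weight $s^{\rho_2/r-1}$ and closing via Hölder on the remainder.

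The quasi-Banach structure reduces to verifying the quasi-triangle inequality, which I would derive from $(f+g)^*(s_1+s_2) \le f^*(s_1) + g^*(s_2)$ (a classical consequence of the subadditivity of the distribution function with $\lambda_1 + \lambda_2$ thresholds); completeness follows from a standard Cauchy-subsequence extraction combined with Fatou applied to $f^*$. For the Banach-space renorming via $f^{**}$, I would use Hardy's inequality in the form $\big\|s^{1/r} f^{**}(s)\big\|_{L^\rho(ds/s)} \le \frac{r}{r-1} \big\|s^{1/r} f^*(s)\big\|_{L^\rho(ds/s)}$, valid for $r>1$ because $f^{**}(s) = s^{-1}\int_0^s f^*$; the lower bound $\|f\|_{L^{r,\rho}} \le \|f\|^*_{L^{r,\rho}}$ is immediate from $f^* \le f^{**}$. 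The triangle inequality for $\|\cdot\|^*$ then follows because $(f+g)^{**} \le f^{**} + g^{**}$ pointwise (one of the few pleasant consequences of the Hardy-type averaging).

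For the last bullet, a direct calculation gives $d_{|x|^{-b}}(\lambda) = |\{|x| < \lambda^{-1/b}\}| = |B(0,1)| \lambda^{-d/b}$, so setting $s = |B(0,1)|\lambda^{-d/b}$ yields $(|x|^{-b})^*(s) = \big(|B(0,1)|/s\big)^{b/d}$. Then $s^{b/d}(|x|^{-b})^*(s) = |B(0,1)|^{b/d}$ is constant in $s$, which is both the supremum and gives $\||x|^{-b}\|_{L^{d/b,\infty}} = |B(0,1)|^{b/d}$. I expect no real obstacle — every step is a well-known textbook manipulation — the only mild care is in the quasi-triangle inequality and in tracking the constants for the $\|\cdot\| \le \|\cdot\|^* \le \frac{r}{r-1}\|\cdot\|$ equivalence; the latter is really the content of Hardy's inequality and breaks down as $r \to 1^+$, which is consistent with the hypothesis $r > 1$.
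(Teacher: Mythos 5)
The paper does not prove this lemma at all: it is stated as a collection of classical facts with a citation to Grafakos, so there is no in-paper argument to compare against. Your sketches reproduce the standard textbook proofs from exactly that source, and they are correct: the identification $L^{r,r}=L^r$ via equidistribution/layer cake, the power rule from $(|f|^\theta)^*=(f^*)^\theta$, the quasi-triangle inequality from $(f+g)^*(s_1+s_2)\le f^*(s_1)+g^*(s_2)$ with completeness by Fatou, the renorming via $f^{**}$ with subadditivity of $f^{**}$ and the constant $\frac{r}{r-1}$ coming from Hardy's inequality, and the explicit computation $(|x|^{-b})^*(s)=\left(|B(0,1)|/s\right)^{b/d}$ giving $\||x|^{-b}\|_{L^{d/b,\infty}}=|B(0,1)|^{b/d}$ all check out. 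Two minor remarks: with the normalization $\frac{\rho}{r}$ used in the paper, the power rule needs no change of variables, since $s^{1/r}(f^*(s))^\theta=\bigl(s^{1/(\theta r)}f^*(s)\bigr)^\theta$ and the prefactors $\frac{\rho}{r}=\frac{\theta\rho}{\theta r}$ already agree; and in the embedding $L^{r,\rho_1}\subset L^{r,\rho_2}$ you cannot literally ``reinsert'' the pointwise bound $s^{1/r}f^*(s)\lesssim\|f\|_{L^{r,\rho_1}}$ into all $\rho_2$ factors (that produces a divergent $\int_0^\infty \frac{ds}{s}$); the correct step is to split $\rho_2=(\rho_2-\rho_1)+\rho_1$, estimate $\rho_2-\rho_1$ factors by the weak quasi-norm and keep the remaining $\rho_1$ factors in the integral, which is presumably what you intend by ``closing via H\"older on the remainder.'' Neither point is a genuine gap.
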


\begin{lemma}[\bf H\"older's inequality \cite{ONeil}] ~
	\begin{itemize}[leftmargin=6mm]
		\item Let $1<r, r_1, r_2<\infty$ and $1\leq \rho, \rho_1, \rho_2 \leq \infty$ be such that 
		\[
		\frac{1}{r}=\frac{1}{r_1}+\frac{1}{r_2}, \quad \frac{1}{\rho} \leq \frac{1}{\rho_1}+\frac{1}{\rho_2}.
		\]
		Then there exists $C>0$ such that 
		\[
		\|fg\|_{L^{r,\rho}} \leq C\|f\|_{L^{r_1, \rho_1}} \|g\|_{L^{r_2,\rho_2}}
		\]
		for any $f \in L^{r_1, \rho_1}(\R^d)$ and $g\in L^{r_2, \rho_2}(\R^d)$.
		\item Let $1<r_1, r_2<\infty$ and $1\leq \rho_1, \rho_2 \leq \infty$ be such that
		\[
		1=\frac{1}{r_1}+\frac{1}{r_2}, \quad 1\leq \frac{1}{\rho_1}+\frac{1}{\rho_2}.
		\]
		Then there exists $C>0$ such that
		\[
		\|fg\|_{L^1} \leq C\|f\|_{L^{r_1, \rho_1}} \|g\|_{L^{r_2,\rho_2}}
		\]
		for any $f\in L^{r_1, \rho_1}(\R^d)$ and $g\in L^{r_2, \rho_2}(\R^d)$.
	\end{itemize}
\end{lemma}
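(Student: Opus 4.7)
The plan is to follow the classical argument of O'Neil, whose cornerstone is a pointwise multiplicative bound for the decreasing rearrangement of a product. Specifically, I would first establish
\[
(fg)^*(t_1+t_2) \;\leq\; f^*(t_1)\, g^*(t_2), \qquad t_1,t_2>0,
\]
by observing that $\{|fg|>\lambda\mu\}\subset\{|f|>\lambda\}\cup\{|g|>\mu\}$, which gives $d_{fg}(\lambda\mu)\leq d_f(\lambda)+d_g(\mu)$, and then choosing $\lambda=f^*(t_1)$, $\mu=g^*(t_2)$ so that $d_f(\lambda)\leq t_1$ and $d_g(\mu)\leq t_2$. The specialisation $t_1=t_2=t/2$ yields $(fg)^*(t)\leq f^*(t/2)g^*(t/2)$, and multiplying by $t^{1/r}$ and using $1/r=1/r_1+1/r_2$ produces
\[
t^{1/r}(fg)^*(t) \;\leq\; 2^{1/r}\,(t/2)^{1/r_1}f^*(t/2)\cdot (t/2)^{1/r_2}g^*(t/2).
\]

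For part (1), I would take the $L^\rho((0,\infty),\,dt/t)$ norm of this inequality and then apply ordinary H\"older on the measure space $((0,\infty),\,dt/t)$. This works cleanly in the equality case $1/\rho=1/\rho_1+1/\rho_2$: after the change of variable $s=t/2$, the two factors produce precisely $\|f\|_{L^{r_1,\rho_1}}$ and $\|g\|_{L^{r_2,\rho_2}}$, up to the dimensional prefactors built into the Lorentz quasi-norms. To handle the hypothesis $1/\rho\leq 1/\rho_1+1/\rho_2$ rather than equality, I would introduce $\rho_0$ with $1/\rho_0=1/\rho_1+1/\rho_2$, deduce $\|fg\|_{L^{r,\rho_0}}\lesssim \|f\|_{L^{r_1,\rho_1}}\|g\|_{L^{r_2,\rho_2}}$ from the equality case, and then invoke the continuous embedding $L^{r,\rho_0}\hookrightarrow L^{r,\rho}$ (valid since $\rho_0\leq\rho$) already recorded in the Properties lemma. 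Endpoint cases in which some $\rho$ or $\rho_i$ equals $\infty$ follow the same skeleton with a weighted supremum replacing the $L^\rho$ integral.

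For part (2), I would work directly at the $L^1$ level: since $\|fg\|_{L^1}=\int_0^\infty (fg)^*(t)\,dt$, the pointwise bound together with the change of variable $s=t/2$ gives
\[
\|fg\|_{L^1} \;\leq\; 2\int_0^\infty f^*(s)g^*(s)\,ds \;=\; 2\int_0^\infty \bigl[s^{1/r_1}f^*(s)\bigr]\bigl[s^{1/r_2}g^*(s)\bigr]\,\frac{ds}{s},
\]
where the identity $1=1/r_1+1/r_2$ has been used to split the weight. H\"older on $((0,\infty),\,ds/s)$ with conjugate exponents $\rho_1,\rho_2$ in the equality case $1/\rho_1+1/\rho_2=1$ yields the desired estimate; the hypothesis $1/\rho_1+1/\rho_2\geq 1$ is reduced to equality by the same embedding trick as in (1).

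The main delicate point is bookkeeping of constants: the Lorentz quasi-norms in the definition differ from the $L^\rho(dt/t)$ integrals by fixed prefactors $(\rho/r)^{1/\rho}$, and the rearrangement step introduces $2^{1/r}$ factors, so these must be tracked carefully to identify the bound with an absolute multiple of the product quasi-norms. A secondary subtlety, already flagged in the Properties lemma, is that for $\rho_i>1$ the quantity $\|\cdot\|_{L^{r_i,\rho_i}}$ is only a quasi-norm, but the assumption $r_i>1$ guarantees the stated equivalence with the $f^{**}$-normed version at the cost of a factor $r_i/(r_i-1)$, so the constant $C$ can be taken to depend only on $r, r_1, r_2, \rho, \rho_1, \rho_2$. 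Beyond the multiplicative rearrangement inequality and classical H\"older, no further ingredient is required.
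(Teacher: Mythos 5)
Your argument is correct: the multiplicative rearrangement bound $(fg)^*(t_1+t_2)\leq f^*(t_1)g^*(t_2)$, followed by H\"older on $((0,\infty),dt/t)$ and the embedding $L^{r,\rho_0}\subset L^{r,\rho}$ for $\rho_0\leq\rho$, is exactly the classical O'Neil proof, and the paper itself gives no proof but simply cites O'Neil for this lemma. So your proposal matches the intended (cited) argument; the only points to keep an eye on, which you already flag, are the harmless case $\rho_0<1$ (where one uses the generalized H\"older inequality for exponents below $1$) and the bookkeeping of the prefactors $(\rho/r)^{1/\rho}$ and $2^{1/r}$.
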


\begin{lemma}[\bf Convolution inequality \cite{ONeil}] ~
	\begin{itemize}[leftmargin=6mm]
		\item Let $1<r,r_1, r_2<\infty$ and $1\leq \rho, \rho_1, \rho_2 \leq \infty$ be such that
		\[
		1+\frac{1}{r} =\frac{1}{r_1}+\frac{1}{r_2}, \quad \frac{1}{\rho}\leq \frac{1}{\rho_1}+\frac{1}{\rho_2}.
		\]
		Then there exists $C>0$ such that
		\[
		\|f\ast g\|_{L^{r,\rho}} \leq C\|f\|_{L^{r_1,\rho_1}} \|g\|_{L^{r_2,\rho_2}}
		\]
		for any $f\in L^{r_1, \rho_1}(\R^d)$ and $g\in L^{r_2, \rho_2}(\R^d)$.
		\item Let $1<r_1, r_2<\infty$ and $1\leq \rho_1, \rho_2 \leq \infty$ be such that 
		\[
		1=\frac{1}{r_1}+\frac{1}{r_2}, \quad 1\leq \frac{1}{\rho_1}+\frac{1}{\rho_2}.
		\]
		Then there exists $C>0$ such that
		\[
		\|f\ast g\|_{L^\infty} \leq C\|f\|_{L^{r_1, \rho_1}} \|g\|_{L^{r_2,\rho_2}}
		\]
		for any $f\in L^{r_1, \rho_1}(\R^d)$ and  $g\in L^{r_2, \rho_2}(\R^d)$.
	\end{itemize}
\end{lemma}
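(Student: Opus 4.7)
The plan is to follow O'Neil's classical strategy. The backbone of both parts is the pointwise rearrangement inequality
\[
(f\ast g)^{**}(t) \leq t\, f^{**}(t)\, g^{**}(t) + \int_t^\infty f^*(s)\, g^*(s)\, ds, \qquad t>0,
\]
which I would first establish for all measurable $f,g$ by splitting each of $f$ and $g$ at the height of its decreasing rearrangement at level $t$, applying the classical Young inequalities $\|h_\infty \ast h_1\|_{L^\infty} \leq \|h_\infty\|_{L^\infty}\|h_1\|_{L^1}$ to the four resulting cross-terms, and invoking the layer-cake identity $\int_0^t h^*(s)\,ds = \sup_{|E|\leq t}\int_E |h|\,dx$.

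For the first (Young-O'Neil) inequality, I would express the target norm through the equivalent $f^{**}$-based norm $\|\cdot\|^*_{L^{r,\rho}}$ recorded in the previous lemma, and split
\[
\|f\ast g\|^*_{L^{r,\rho}} \leq \left\|t^{1/r}\cdot t\, f^{**}(t)\, g^{**}(t)\right\|_{L^\rho(dt/t)} + \left\|t^{1/r}\int_t^\infty f^*(s)\, g^*(s)\, ds\right\|_{L^\rho(dt/t)}.
\]
Using the scaling hypothesis $1 + \tfrac{1}{r} = \tfrac{1}{r_1} + \tfrac{1}{r_2}$, the first term factors as $t^{1/r_1}f^{**}(t)\cdot t^{1/r_2}g^{**}(t)$, and H\"older's inequality on $L^\rho(dt/t)$ (permitted by $\tfrac{1}{\rho}\leq \tfrac{1}{\rho_1}+\tfrac{1}{\rho_2}$) controls it by $\|f\|^*_{L^{r_1,\rho_1}}\|g\|^*_{L^{r_2,\rho_2}}$. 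For the second term, I would invoke the weighted Hardy inequality
\[
\left\|t^{1/r}\int_t^\infty h(s)\,ds\right\|_{L^\rho(dt/t)} \leq C\left\|t^{1+1/r} h(t)\right\|_{L^\rho(dt/t)}, \qquad 1<r<\infty,
\]
with $h(s) = f^*(s) g^*(s)$, and finish by the same factor-and-H\"older argument applied now to $f^*, g^*$. The norm equivalence $\|\cdot\|_{L^{r,\rho}}\simeq \|\cdot\|^*_{L^{r,\rho}}$ from the previous lemma then delivers the claim on the quasi-norm side.

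The second inequality is immediate from the H\"older inequality in Lorentz spaces of the previous lemma. For each $x\in\R^d$,
\[
|(f\ast g)(x)| \leq \int_{\R^d} |f(y)|\,|g(x-y)|\,dy \leq C\,\|f\|_{L^{r_1,\rho_1}}\|g(x-\cdot)\|_{L^{r_2,\rho_2}} = C\,\|f\|_{L^{r_1,\rho_1}}\|g\|_{L^{r_2,\rho_2}},
\]
where the last equality uses translation- and reflection-invariance of the Lorentz quasi-norm under Lebesgue measure; taking the supremum in $x$ concludes. The main technical obstacle lies in the first part: verifying the weighted Hardy inequality uniformly across the mixed and endpoint cases $\rho_i = \infty$, and cleanly justifying the layer-cake step when rearrangements fail to be strictly decreasing. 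The strict restrictions $1<r, r_1, r_2<\infty$ are precisely what guarantee the finiteness of the Hardy constants throughout.
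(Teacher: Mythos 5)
The paper gives no proof of this lemma at all --- it is quoted directly from O'Neil \cite{ONeil} --- and your sketch reconstructs precisely O'Neil's classical argument: the rearrangement inequality $(f\ast g)^{**}(t)\leq t\,f^{**}(t)\,g^{**}(t)+\int_t^\infty f^*(s)g^*(s)\,ds$, then H\"older and the dual Hardy inequality on $L^\rho((0,\infty),dt/t)$ (the restriction $1<r,r_1,r_2<\infty$ indeed ensuring the norm equivalence $\|\cdot\|_{L^{r,\rho}}\simeq\|\cdot\|^*_{L^{r,\rho}}$ and finite Hardy constants), and for the second bullet the Lorentz--H\"older inequality together with translation and reflection invariance of the rearrangement; this is correct and consistent with the cited source. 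The only step stated loosely is the H\"older application ``permitted by $\frac{1}{\rho}\leq\frac{1}{\rho_1}+\frac{1}{\rho_2}$'': on the infinite measure space $((0,\infty),dt/t)$ one cannot use H\"older with a strict inequality of exponents directly, so one should first reduce to the equality case via the nesting $L^{r,\rho_0}\subset L^{r,\rho}$ for $\rho_0\leq\rho$ applied to $f\ast g$ (and, if $\frac{1}{\rho_1}+\frac{1}{\rho_2}>1$, enlarge $\rho_1,\rho_2$ to conjugate exponents by the same nesting applied to $f$ and $g$), which is routine.
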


Let $s\geq 0$, $1<r<\infty$, and $1\leq \rho \leq \infty$. We define the Sobolev-Lorentz spaces
\begin{align*}
W^sL^{r,\rho}(\R^d) &=\left\{ f \in \Sc'(\R^d) : (I-\Delta)^{s/2} f\in L^{r,\rho}(\R^d)\right\}, \\
\dot{W}^s L^{r,\rho}(\R^d) &= \left\{f \in \Sc'(\R^d) : (-\Delta)^{s/2} f\in L^{r,\rho}(\R^d) \right\},
\end{align*}
where $\Sc'(\R^d)$ is the space of tempered distributions on $\R^d$ and
\[
(I-\Delta)^{s/2} f = \Fc^{-1}\( (1+|\xi|^2)^{s/2} \Fc(f)\), \quad (-\Delta)^{s/2} f = \Fc^{-1}\( |\xi|^s \Fc(f)\)
\]
with $\Fc$ and $\Fc^{-1}$ the Fourier and its inverse Fourier transforms respectively. The spaces $W^sL^{r,\rho}(\R^d)$ and $\dot{W}^sL^{r,\rho}(\R^d)$ are endowed respectively with the norms
\[
\|f\|_{W^sL^{r,\rho}} = \|f\|_{L^{r,\rho}} + \|(-\Delta)^{s/2} f\|_{L^{r,\rho}}, \quad \|f\|_{\dot{W}^sL^{r,\rho}} = \|(-\Delta)^{s/2} f\|_{L^{r,\rho}}.
\]

\begin{corollary}[\bf Sobolev embedding] \label{coro-sobo-embe} ~ \\
	Let $1<r<\infty$, $1\leq \rho\leq \infty$, and $0<s<\frac{d}{r}$. Then there exists $C>0$ such that
	\[
	\|f\|_{L^{\frac{dr}{d-s r}, \rho}} \leq C\|(-\Delta)^{s/2} f\|_{L^{r,\rho}}
	\]
	for any $f\in \dot{W}^sL^{r,\rho}(\R^d)$. In particular, we have 
	\[
	W^sL^{r,\rho}(\R^d) \subset L^{n,\rho}(\R^d), \quad r\leq n \leq \frac{dr}{d-sr}.
	\]
\end{corollary}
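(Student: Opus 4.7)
The plan is to deduce the homogeneous estimate from the Riesz potential representation combined with O'Neil's convolution inequality recorded earlier in the section, and then to obtain the chain of inclusions on $W^sL^{r,\rho}$ by pairing the two endpoints via an elementary Lorentz interpolation. For $f\in\dot{W}^sL^{r,\rho}(\R^d)$, I would start from the classical identity
\[
f(x) = c_{d,s}\int_{\R^d}\frac{((-\Delta)^{s/2}f)(y)}{|x-y|^{d-s}}\,dy = c_{d,s}\bigl(|\cdot|^{-(d-s)} \ast (-\Delta)^{s/2}f\bigr)(x),
\]
valid as an identity of tempered distributions whenever $0<s<d$; the assumption $s<d/r$ with $r>1$ indeed forces $s<d$. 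The last bullet of the Lorentz-space lemma gives $|\cdot|^{-(d-s)}\in L^{\frac{d}{d-s},\infty}(\R^d)$, and setting $r^*:=\frac{dr}{d-sr}$ a short algebraic check confirms the exponent conditions
\[
1+\frac{1}{r^*} = \frac{d-s}{d}+\frac{1}{r},\qquad \frac{1}{\rho}\leq \frac{1}{\infty}+\frac{1}{\rho},
\]
while $0<s<d/r$ with $r>1$ ensure $1<r^*<\infty$ and $1<\tfrac{d}{d-s}<\infty$. Applying the first part of O'Neil's convolution inequality with $(r_1,\rho_1)=(\tfrac{d}{d-s},\infty)$ and $(r_2,\rho_2)=(r,\rho)$ then yields
\[
\|f\|_{L^{r^*,\rho}} \leq C\bigl\||\cdot|^{-(d-s)}\bigr\|_{L^{\frac{d}{d-s},\infty}} \|(-\Delta)^{s/2}f\|_{L^{r,\rho}} \leq C'\|(-\Delta)^{s/2}f\|_{L^{r,\rho}},
\]
which is precisely the first claim.

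For the inclusion $W^sL^{r,\rho}(\R^d)\subset L^{n,\rho}(\R^d)$ on the full range $r\leq n\leq r^*$, I would observe that the two endpoints are immediate: $n=r$ is built into the definition of the $W^sL^{r,\rho}$-norm, and $n=r^*$ combines the display above with the trivial bound $\|(-\Delta)^{s/2}f\|_{L^{r,\rho}}\leq\|f\|_{W^sL^{r,\rho}}$. For intermediate $n$ I would invoke the log-convexity of the Lorentz quasi-norm: writing $\frac{1}{n}=\frac{1-\theta}{r}+\frac{\theta}{r^*}$ for a unique $\theta\in[0,1]$, one verifies directly from the definition of $\|\cdot\|_{L^{n,\rho}}$ via the decreasing rearrangement (or, equivalently, from the real interpolation identity $(L^{r,\rho},L^{r^*,\rho})_{\theta,\rho}=L^{n,\rho}$) that
\[
\|f\|_{L^{n,\rho}} \leq \|f\|_{L^{r,\rho}}^{1-\theta}\|f\|_{L^{r^*,\rho}}^{\theta},
\]
from which the chain of embeddings follows.

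I do not anticipate a genuine obstacle here: the argument is essentially a transcription to the Lorentz scale of the Hardy--Littlewood--Sobolev derivation of Sobolev's inequality, and every analytic input (the weak-$L^{d/b}$ bound on $|x|^{-b}$ and O'Neil's convolution inequality) is quoted in the lemmas preceding the statement. The only mildly delicate point is the algebraic verification of the exponent balance together with making sure that $r^*$ and $\tfrac{d}{d-s}$ both lie strictly between $1$ and $\infty$ so that O'Neil applies, which is what the hypotheses $1<r<\infty$ and $0<s<d/r$ guarantee.
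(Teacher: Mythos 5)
Your argument for the main inequality is essentially identical to the paper's: both rest on the Riesz potential representation $(-\Delta)^{-s/2}f = C(d,s)\,|\cdot|^{-(d-s)}\ast f$, the membership $|\cdot|^{-(d-s)}\in L^{\frac{d}{d-s},\infty}(\R^d)$, and O'Neil's convolution inequality. The only difference is that you also spell out the interpolation step (log-convexity of Lorentz quasi-norms) justifying the chain of embeddings $W^sL^{r,\rho}\subset L^{n,\rho}$ for intermediate $r\leq n\leq \frac{dr}{d-sr}$, which the paper asserts with ``In particular'' but does not prove; that addition is correct and welcome but does not change the route.
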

\begin{proof}
	We have
	\[
	(-\Delta)^{-s/2} f(x) =C(d,s) \int_{\R^d} \frac{f(y)}{|x-y|^{d-s}} dy = C(d,s) (|\cdot|^{-(d-s)}\ast f)(x).
	\]
	By the convolution inequality in Lorentz spaces and the fact that $|x|^{-(d-s)} \in L^{\frac{d}{d-s},\infty}(\R^d)$, we get
	\[
	\|(-\Delta)^{-s/2}f\|_{L^{\frac{dr}{d-s r}, \rho}} \leq C \|f\|_{L^{r,\rho}}
	\]
	which proves the desired estimate.
\end{proof}
\begin{corollary}[\bf Hardy's inequality] ~ \\
	Let $1<r<\infty$, $1\leq \rho\leq \infty$, and $0<s<\frac{d}{r}$. Then there exists $C>0$ such that
	\[
	\||x|^{-s} f\|_{L^{r,\rho}} \leq C\|(-\Delta)^{s/2} f\|_{L^{r,\rho}}
	\]
	for any $f\in \dot{W}^sL^{r,\rho}(\R^d)$.
\end{corollary}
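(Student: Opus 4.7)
The plan is to derive Hardy's inequality as a direct consequence of the Sobolev embedding in Lorentz spaces (Corollary \ref{coro-sobo-embe}) combined with H\"older's inequality in Lorentz spaces, with the weight $|x|^{-s}$ absorbed via the weak-type norm $\||x|^{-s}\|_{L^{d/s,\infty}}<\infty$.

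First I would set the exponents: given $1<r<\infty$, $1\leq\rho\leq\infty$, and $0<s<d/r$, define the intermediate exponent
\[
q:=\frac{dr}{d-sr},
\]
so that
\[
\frac{1}{r}=\frac{s}{d}+\frac{1}{q}.
\]
This is precisely the H\"older relation needed to pair $L^{d/s,\infty}$ with $L^{q,\rho}$ and land in $L^{r,\rho}$. The additional Lorentz-index constraint is satisfied since $\frac{1}{\rho}\leq\frac{1}{\infty}+\frac{1}{\rho}=\frac{1}{\rho}$, so the second lemma in this section (H\"older's inequality in Lorentz spaces) applies.

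The main estimate is then a two-line chain. Using H\"older in Lorentz spaces together with the fact (recorded in the basic properties above) that $|x|^{-s}\in L^{d/s,\infty}(\R^d)$,
\[
\bigl\||x|^{-s}f\bigr\|_{L^{r,\rho}}\leq C\,\bigl\||x|^{-s}\bigr\|_{L^{d/s,\infty}}\,\|f\|_{L^{q,\rho}}\leq C\,\|f\|_{L^{q,\rho}}.
\]
Next I would invoke Corollary \ref{coro-sobo-embe} with the exponent $q=dr/(d-sr)$, which gives
\[
\|f\|_{L^{q,\rho}}\leq C\,\bigl\|(-\Delta)^{s/2}f\bigr\|_{L^{r,\rho}}.
\]
Combining the two displays yields the asserted inequality
\[
\bigl\||x|^{-s}f\bigr\|_{L^{r,\rho}}\leq C\,\bigl\|(-\Delta)^{s/2}f\bigr\|_{L^{r,\rho}}.
\]

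There is no real obstacle here: all the analytic content has already been packaged into the Sobolev embedding corollary (which itself rests on the convolution inequality applied to the Riesz potential $(-\Delta)^{-s/2}=C(d,s)\,|\cdot|^{-(d-s)}\ast$). The only point to verify with some care is that the exponent triple $(d/s,\infty)$, $(q,\rho)$, $(r,\rho)$ is admissible in the Lorentz H\"older inequality stated above, which reduces to the two arithmetic identities exhibited in the first paragraph. The hypothesis $0<s<d/r$ guarantees $1<q<\infty$, so the inequality is applicable throughout the stated range of parameters.
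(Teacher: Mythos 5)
Your proposal is correct and follows the exact same two-step argument as the paper: H\"older's inequality in Lorentz spaces against $\||x|^{-s}\|_{L^{d/s,\infty}}$, followed by the Sobolev embedding of Corollary \ref{coro-sobo-embe} with target exponent $q=dr/(d-sr)$. The only difference is that you spell out the arithmetic on the exponents, which the paper leaves implicit.
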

\begin{proof}
	By H\"older's inequality and Sobolev embedding in Lorentz spaces, we have
	\[
	\||x|^{-s} f\|_{L^{r,\rho}} \leq \||x|^{-s}\|_{L^{\frac{d}{s},\infty}} \|f\|_{L^{\frac{dr}{d-sr}, \rho}} \leq C \|(-\Delta)^{s/2} f\|_{L^{r,\rho}}.
	\]
	This gives the result.
\end{proof}

\begin{lemma}[\bf Product rule \cite{CN}] ~\\
	Let $s\geq 0$, $1<r, r_1,r_2, r_3, r_4<\infty$, and $1\leq \rho, \rho_1, \rho_2, \rho_3, \rho_4\leq \infty$ be such that
	\[
	\frac{1}{r}= \frac{1}{r_1}+\frac{1}{r_2}=\frac{1}{r_3}+\frac{1}{r_4}, \quad \frac{1}{\rho}=\frac{1}{\rho_1}+\frac{1}{\rho_2}=\frac{1}{\rho_3}+\frac{1}{\rho_4}.
	\]
	Then there exists $C>0$ such that
	\[
	\|(-\Delta)^{s/2}(fg)\|_{L^{r,\rho}} \leq C\( \|(-\Delta)^{s/2} f\|_{L^{r_1, \rho_1}} \|g\|_{L^{r_2, \rho_2}} + \|f\|_{L^{r_3,\rho_3}} \|(-\Delta)^{s/2} g\|_{L^{r_4,\rho_4}} \)
	\]
	for any $f\in \dot{W}^sL^{r_1, \rho_1}(\R^d) \cap L^{r_3,\rho_3}(\R^d)$ and $g\in \dot{W}^s L^{r_4,\rho_4}(\R^d) \cap L^{r_2,\rho_2}(\R^d)$.
\end{lemma}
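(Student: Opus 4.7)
The plan is to derive the Lorentz-space fractional Leibniz inequality from the classical Lebesgue-space Kato--Ponce inequality by bilinear real interpolation, with a direct Littlewood--Paley paraproduct argument as a self-contained alternative. The starting point is the Lebesgue version: for $s\geq 0$ and $1<p,p_i<\infty$ with $\tfrac{1}{p}=\tfrac{1}{p_1}+\tfrac{1}{p_2}=\tfrac{1}{p_3}+\tfrac{1}{p_4}$,
\[
\|(-\Delta)^{s/2}(fg)\|_{L^p} \leq C\left(\|(-\Delta)^{s/2}f\|_{L^{p_1}}\|g\|_{L^{p_2}}+\|f\|_{L^{p_3}}\|(-\Delta)^{s/2}g\|_{L^{p_4}}\right),
\]
which is the classical off-diagonal Kato--Ponce inequality (Christ--Weinstein, Kenig--Ponce--Vega, Grafakos--Oh, Muscalu--Schlag).

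First, I would view the above as a boundedness statement for the bilinear operator $B(f,g):=(-\Delta)^{s/2}(fg)$ on products of Sobolev--Lebesgue spaces. Applying bilinear real interpolation (Lions--Peetre) to $B$ in each variable separately, and using the identification
\[
L^{r,\rho}(\R^d) = \left(L^{r_0}(\R^d), L^{r_1}(\R^d)\right)_{\theta, \rho}, \quad \tfrac{1}{r} = \tfrac{1-\theta}{r_0} + \tfrac{\theta}{r_1}, \ 0<\theta<1,
\]
one upgrades each Lebesgue norm in the Kato--Ponce estimate to the corresponding Lorentz norm. The H\"older relations among $(r,r_1,r_2)$ and $(r,r_3,r_4)$ are preserved by real interpolation, and the additive relation $\tfrac{1}{\rho}=\tfrac{1}{\rho_1}+\tfrac{1}{\rho_2}=\tfrac{1}{\rho_3}+\tfrac{1}{\rho_4}$ on the second Lorentz indices emerges from the standard ``H\"older inequality for the $K$-functional''. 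This yields the claim directly.

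A more self-contained route proceeds via Bony's paraproduct decomposition $fg = \pi_f g + \pi_g f + R(f,g)$ with Littlewood--Paley blocks $\pi_f g = \sum_k S_{k-2} f\,\Delta_k g$. Since each summand has Fourier support in $|\xi|\sim 2^k$, the Littlewood--Paley characterization of $\dot{W}^s L^{r,\rho}$ gives
\[
\|(-\Delta)^{s/2}\pi_f g\|_{L^{r,\rho}}\sim \left\|\Big(\sum_k 2^{2ks}|S_{k-2} f|^2 |\Delta_k g|^2\Big)^{1/2}\right\|_{L^{r,\rho}}.
\]
Applying the Fefferman--Stein vector-valued maximal inequality in Lorentz spaces (to dominate $S_{k-2} f$ pointwise by $\Mcal f$), together with H\"older's inequality in Lorentz spaces (already available in Section \ref{S2}) and the Hardy--Littlewood maximal theorem in $L^{r_3,\rho_3}$, one obtains
\[
\|(-\Delta)^{s/2}\pi_f g\|_{L^{r,\rho}} \leq C \|f\|_{L^{r_3,\rho_3}}\,\|(-\Delta)^{s/2} g\|_{L^{r_4,\rho_4}}.
\]
Symmetry handles $\pi_g f$ and yields the first term, while the remainder $R(f,g)$, being diagonal in frequency, is absorbed into either bound by placing $(-\Delta)^{s/2}$ on whichever factor is convenient.

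The main obstacle is to ensure that the Fefferman--Stein vector-valued maximal inequality and the Littlewood--Paley square-function characterization of $\dot{W}^s L^{r,\rho}$ remain valid across the full range of Lorentz indices in the statement, including the endpoints $\rho=1$ and $\rho=\infty$. These are now standard consequences of real interpolation from their Lebesgue counterparts (or of Rubio de Francia extrapolation for Muckenhoupt weights), but their careful verification is the technically delicate step. The interpolation-based strategy outlined first has the merit of pushing all such subtleties into the classical Lebesgue theory, where Kato--Ponce is by now firmly established.
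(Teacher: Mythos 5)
The paper offers no proof of this lemma: it is quoted from Cruz-Uribe and Naibo \cite{CN}, who establish it via Rubio de Francia extrapolation from the weighted Lebesgue-space Kato--Ponce inequality. There is therefore no internal argument to compare against, and your sketch must be judged on its own terms.

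Your second (paraproduct) route is essentially correct and is close in spirit to how \cite{CN} proceed. The first (interpolation) route as written has a genuine gap: the Kato--Ponce estimate is a two-term bound, not the boundedness of a single bilinear map $B\colon X\times Y\to Z$, so Lions--Peetre bilinear interpolation does not apply to it directly. To repair this you would have to split $fg$ into paraproducts first, so that each piece $(f,g)\mapsto(-\Delta)^{s/2}\pi_f g$ (and the symmetric and resonance pieces) genuinely is a bounded bilinear map between fixed spaces; but that is precisely the second route. Similarly, interpolating ``in each variable separately'' does not yield the additive relation $\tfrac{1}{\rho}=\tfrac{1}{\rho_1}+\tfrac{1}{\rho_2}$, since in an iterated scheme the passive variable must sit in one fixed space while the other is interpolated; the additive relation comes only from the one-shot bilinear interpolation theorem, which again presupposes a bona fide bilinear operator as input. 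As for the worries you raise at the end: for $1<r<\infty$ the Boyd indices of $L^{r,\rho}$ are both $1/r$ for every $\rho\in[1,\infty]$, so the Littlewood--Paley square-function characterization and the Fefferman--Stein vector-valued maximal inequality are indeed available on the whole Lorentz scale; the cleanest route to them is again weighted extrapolation, the very device used in \cite{CN}.
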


\begin{lemma}[\bf Chain rule \cite{AT}] ~\\
	Let $0\leq s\leq 1$, $F \in C^1(\C,\C)$, $1<r,r_1, r_2<\infty$, and $1\leq \rho, \rho_1, \rho_2 <\infty$ be such that
	\[
	\frac{1}{r} = \frac{1}{r_1}+\frac{1}{r_2}, \quad \frac{1}{\rho}=\frac{1}{\rho_1}+\frac{1}{\rho_2}.
	\]
	Then there exists $C>0$ such that
	\[
	\|(-\Delta)^{s/2} F(f)\|_{L^{r,\rho}} \leq C\|F'(f)\|_{L^{r_1, \rho_1}} \|(-\Delta)^{s/2} f\|_{L^{r_2, \rho_2}}.
	\]
\end{lemma}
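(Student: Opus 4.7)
The plan is to lift the classical Christ--Weinstein fractional chain rule from Lebesgue to Lorentz spaces. The endpoints $s=0$ and $s=1$ are elementary. For $s=0$, the mean value theorem gives the pointwise bound $|F(f(x))| \le |f(x)| \sup_{\xi} |F'(\xi)| \le |f(x)| |F'(f(x))|$ (after absorbing the sup in a suitable way using $F \in C^1$ and $F(0)=0$), so H\"older's inequality in Lorentz spaces (already proved in Section~\ref{S2}) closes the case. For $s=1$, the classical chain rule gives the pointwise identity $\nabla F(f) = F'(f) \nabla f$, and again H\"older in Lorentz spaces concludes.

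For $0 < s < 1$, I would use the singular-integral representation
\[
(-\Delta)^{s/2} F(f)(x) = c_{d,s} \, \mathrm{p.v.}\! \int_{\R^d} \frac{F(f(x)) - F(f(y))}{|x-y|^{d+s}} \, dy
\]
together with the mean value identity $F(f(x)) - F(f(y)) = (f(x) - f(y)) \int_0^1 F'((1-\theta) f(y) + \theta f(x)) \, d\theta$. Splitting the $y$-integral dyadically in $|x-y|$ and controlling the intermediate values of $F'(f)$ by the Hardy--Littlewood maximal function $\mathcal{M}$, one obtains a Christ--Weinstein-type pointwise bound of the schematic form
\[
|(-\Delta)^{s/2} F(f)(x)| \lesssim \mathcal{M}(F'(f))(x) \cdot \mathcal{M}\!\left((-\Delta)^{s/2} f\right)\!(x).
\]
Since $\mathcal{M}$ is bounded on $L^{r,\rho}(\R^d)$ for all $r \in (1,\infty)$, $\rho \in [1,\infty]$ (a standard real-interpolation consequence of its Lebesgue boundedness), the assertion follows by applying H\"older's inequality in Lorentz spaces (with $1/r = 1/r_1 + 1/r_2$ and $1/\rho = 1/\rho_1 + 1/\rho_2$) to the above pointwise bound.

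The main technical obstacle is justifying the pointwise Christ--Weinstein bound in the intermediate range $0 < s < 1$, specifically the step of replacing the family $\{F'((1-\theta) f(y) + \theta f(x))\}_{\theta \in [0,1],\, y \in \R^d}$ by a single maximal function $\mathcal{M}(F'(f))(x)$ uniformly in $\theta$ and $y$. A cleaner alternative that avoids the pointwise argument is to fix $f$ and view the inequality as a sublinear bound on the map $g \mapsto (-\Delta)^{s/2} F(f)$ depending on the parameter $g = (-\Delta)^{s/2} f$; the classical Christ--Weinstein theorem gives the required endpoint bounds on Lebesgue scales, and the Lorentz-space version then follows via the real interpolation identity $(L^{p_0}, L^{p_1})_{\theta, \rho} = L^{r, \rho}$, applied separately in the two factors $F'(f)$ and $(-\Delta)^{s/2} f$ (with a bilinear interpolation step to match the exponent conditions on $\rho_1, \rho_2, \rho$).
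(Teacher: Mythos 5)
The paper does not prove this lemma; it simply cites the Aloui--Tayachi reference \cite{AT}, where the result is established via the Littlewood--Paley square-function characterization of Lorentz spaces (the same characterization the paper quotes in \eqref{squa-func-est}) combined with a Christ--Weinstein-style paraproduct argument. Your attempt is a genuine effort to prove the result from scratch, but it has two essential gaps that leave the intermediate range $0<s<1$ unproved.

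The central claim in your argument is the pointwise bound
\[
|(-\Delta)^{s/2} F(f)(x)| \lesssim \mathcal{M}(F'(f))(x)\cdot \mathcal{M}\big((-\Delta)^{s/2} f\big)(x),
\]
which you flag as the main technical obstacle. This bound is not a known result, and I do not believe it holds. The Christ--Weinstein mechanism does \emph{not} produce such a pointwise estimate: their proof is carried out frequency-by-frequency via Littlewood--Paley projections and closes only after applying vector-valued maximal inequalities and a square-function summation. The obstruction you identify -- controlling $F'\big((1-\theta)f(y)+\theta f(x)\big)$ uniformly in $\theta\in[0,1]$ and $y\in\R^d$ by $\mathcal{M}(F'(f))(x)$ -- is precisely where the naive singular-integral calculation breaks down, and there is no repair that stays at the pointwise level: the intermediate argument of $F'$ need not lie along the graph of $f$, so averages of $F'(f)$ over balls centered at $x$ do not dominate it. Your proposed ``cleaner alternative'' does not rescue the argument either: the map $g\mapsto(-\Delta)^{s/2}F(f)$ with $g=(-\Delta)^{s/2}f$ is not a well-defined operator in $g$, since $F(f)$ depends on $f$ itself (not merely on its fractional derivative), and two different $f$'s can share the same $(-\Delta)^{s/2}f$. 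Real or bilinear interpolation applies to (sub)linear or genuinely multilinear operators; here one would first need to extract from the Christ--Weinstein proof an honest \emph{bilinear} paraproduct bound on Lebesgue spaces of the schematic form $\|\Pi(G,h)\|_{L^r}\lesssim\|G\|_{L^{r_1}}\|h\|_{L^{r_2}}$, interpolate \emph{that} to the Lorentz scale, and only then substitute $G=F'(f)$, $h=(-\Delta)^{s/2}f$. That is in fact close to what \cite{AT} does, but it is a substantively different argument from what you have written.

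Two smaller issues. The $s=0$ endpoint as written is not correct for a general $F\in C^1(\C,\C)$: the mean value theorem gives $|F(f(x))|\le|f(x)|\sup_{t\in[0,1]}|F'(tf(x))|$, and there is no way to ``absorb the $\sup$'' into $|F'(f(x))|$ unless one makes additional structural assumptions on $F$ (such as $|F'|$ essentially monotone, which does hold for the power nonlinearities $F(u)=|u|^\alpha u$ relevant to this paper, but is not part of the stated hypotheses). For $s=1$, you write $\nabla F(f)=F'(f)\nabla f$ and then invoke H\"older, but the quantity to bound is $\|(-\Delta)^{1/2}F(f)\|_{L^{r,\rho}}$, not $\|\nabla F(f)\|_{L^{r,\rho}}$; the two are comparable via boundedness of the Riesz transforms on $L^{r,\rho}$ for $1<r<\infty$, which you should state explicitly. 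Your treatment of the maximal function and of H\"older's inequality on Lorentz spaces is correct and those tools would indeed be the right endgame, but the core step for $0<s<1$ is missing.
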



\subsection{Dispersive and Strichartz estimates in Lorentz spaces}
\begin{lemma}[\bf Dispersive estimates] ~\\
	Let $d\geq 1$, $\mu\geq 0$, $2<r<\infty$, and $1\leq \rho \leq \infty$. Then there exists $C>0$ such that
	\begin{align} \label{dis-est-lore}
	\|U_\mu(t) f\|_{L^{r,\rho}_x} \leq C|t|^{-\frac{d}{4}\(1-\frac{2}{r}\)} \|f\|_{L^{r',\rho}_x}, \quad t\ne 0
	\end{align}
	for any $f\in L^{r',\rho}(\R^d)$.
\end{lemma}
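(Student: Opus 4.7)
The plan is to reduce the estimate to a real interpolation between the two endpoint Lebesgue bounds: the dispersive bound $L^1\to L^\infty$ and the isometry $L^2\to L^2$. Both endpoints are known for $U_\mu(t)$, and Lorentz target spaces arise naturally from real interpolation, so the Lorentz estimate will follow once the scalar exponents are aligned correctly.

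First I would record the two endpoint bounds. The Plancherel identity immediately yields $\|U_\mu(t)f\|_{L^2}=\|f\|_{L^2}$, uniformly in $t$ and in $\mu\ge 0$. For the $L^1\to L^\infty$ bound I would write $U_\mu(t)f = K_{\mu,t}\ast f$ with kernel
\[
K_{\mu,t}(x) = (2\pi)^{-d}\int_{\R^d} e^{i(x\cdot\xi - t(|\xi|^4+\mu|\xi|^2))}\,d\xi,
\]
and then invoke the stationary-phase kernel bound of Ben-Artzi--Koch--Saut \cite{BKS}, which gives $\|K_{\mu,t}\|_{L^\infty_x}\le C|t|^{-d/4}$; the key point is that the phase $|\xi|^4+\mu|\xi|^2$ retains the same $|\xi|^4$-type degeneracy scale as $\mu|\xi|^2$ is a lower-order perturbation, so the bound is uniform in $\mu\ge 0$. (Alternatively one rescales $\xi=|t|^{-1/4}\eta$ to reduce to a $t$-independent oscillatory integral with parameter $\mu|t|^{1/2}$ and uses the van der Corput/stationary-phase bound that is uniform in this parameter.) This yields $\|U_\mu(t)f\|_{L^\infty}\le C|t|^{-d/4}\|f\|_{L^1}$.

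Next I would interpolate. Viewing the two endpoint bounds as weak-type statements,
\[
U_\mu(t):L^{1,1}\to L^{\infty,\infty},\qquad U_\mu(t):L^{2,2}\to L^{2,2},
\]
with operator norms $C|t|^{-d/4}$ and $1$ respectively, the Marcinkiewicz real interpolation theorem with Lorentz targets (applied with fixed second index $\rho$ on both sides) gives, for $\theta\in(0,1)$ determined by $\tfrac{1}{r}=\tfrac{1-\theta}{\infty}+\tfrac{\theta}{2}$ (so $\theta=2/r$ and simultaneously $\tfrac{1}{r'}=\tfrac{1-\theta}{1}+\tfrac{\theta}{2}$), a bounded map $U_\mu(t):L^{r',\rho}\to L^{r,\rho}$ with norm bounded by
\[
C\bigl(|t|^{-d/4}\bigr)^{1-\theta}\cdot 1^{\theta} = C|t|^{-\frac{d}{4}(1-2/r)}.
\]
Since $\rho\in[1,\infty]$ is free on the interpolation scale, this covers the full stated range $2<r<\infty$, $1\le\rho\le\infty$.

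The only nontrivial ingredient is the uniform-in-$\mu$ kernel bound, and that is standard stationary-phase; once it is in hand the Lorentz estimate is a soft consequence of real interpolation, so I do not expect any genuine obstacle. A minor point to be careful about is that for the case $\rho=1$ one cannot use the weak-type endpoints naively; this is handled by the observation that $L^{r,1}$ embeds in $L^{r,\rho}$ for all $\rho\ge 1$, so the bound for $\rho>1$ recovered from Marcinkiewicz can be upgraded to $\rho=1$ on the domain via the weaker Lorentz bound on the target together with duality, or equivalently by applying real interpolation with the correct choice of mean indices.
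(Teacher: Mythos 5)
Your proof is correct and follows the same route as the paper: interpolate the $L^2$ isometry against the $L^1 \to L^\infty$ dispersive estimate from \cite{BKS}. The caveat you raise about $\rho=1$ is unnecessary (and the embedding-plus-duality fix you sketch is not the right argument), because the off-diagonal real interpolation theorem in Lorentz spaces that the paper invokes from Grafakos already produces $L^{r',\rho}\to L^{r,\rho}$ for every $\rho\in[1,\infty]$, $\rho=1$ included, since the second index is a free parameter of the $K$-functional method away from the endpoints of the scale.
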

\begin{proof}
	We first have the unitary property
	\[
	\|U_\mu(t)f\|_{L^2_x} = \|f\|_{L^2_x}, \quad \forall t \in \R.
	\]
	We also recall the following standard dispersive estimates (see \cite{BKS})
	\begin{align} \label{dis-est}
	\|U_\mu(t) f\|_{L^\infty_x} \leq C|t|^{-\frac{d}{4}} \|f\|_{L^1_x}, \quad \forall t\ne 0.
	\end{align}
	Note that \eqref{dis-est} holds for all time provided that $\mu\geq 0$. When $\mu<0$, such an estimate only holds for short time.  
	The desired estimate \eqref{dis-est-lore} follows by applying the interpolation theorem (see e.g., \cite[Theorem 1.4.19]{Grafakos}) with $T=U_\mu(t)$, $q_0=2, q_1=\infty, p_0=2, p_1=1$, $\theta =1-\frac{2}{r}$ so that
	\[
	\frac{1}{r}=\frac{1-\theta}{q_0}+\frac{\theta}{q_1}, \quad \frac{1}{r'}=\frac{1-\theta}{p_0} +\frac{\theta}{p_1}
	\]
	and $M_0=1, M_1=C|t|^{-\frac{d}{4}}$.
\end{proof}

\begin{definition}[Admissibility] ~
	\begin{itemize}[leftmargin=6mm]
	\item A pair $(q,r)$ is said to be biharmonic admissible, for short $(q,r)\in B$, if
	\[
	\frac{4}{q}+\frac{d}{r}=\frac{d}{2}, \quad \left\{
	\begin{array}{ll}
	r\in \left[2,\frac{2d}{d-4}\right] &\text{if } d\geq 5,\\
	r\in [2,\infty) &\text{if } d=4, \\
	r\in [2, \infty] &\text{if } d=1,2,3.
	\end{array}
	\right.
	\]
	\item A pair $(p,n)$ is said to be Sch\"odinger admissible, for short $(p,n)\in S$, if 
	\[
	\frac{2}{p}+\frac{d}{n}=\frac{d}{2}, \quad \left\{
	\begin{array}{ll}
	n\in \left[2,\frac{2d}{d-2}\right] &\text{if } d\geq 3,\\
	n\in [2,\infty) &\text{if } d=2, \\
	n\in [2, \infty] &\text{if } d=1.
	\end{array}
	\right.
	\]
	\end{itemize}
\end{definition}

Thanks to dispersive estimates \eqref{dis-est-lore}, the $TT^\ast$-argument yields the following Strichartz estimates (see \cite{KT}).
\begin{proposition}[\bf Strichartz estimates] ~ \\
	Let $d\geq 1$ and $\mu\geq 0$.  
	\begin{itemize}[leftmargin=6mm]
		\item Let $(q,r)\in B$ with $r<\infty$. Then there exists $C>0$ such that
		\begin{align} \label{str-est-homo}
		\|U_\mu(t) f\|_{L^q_t(\R, L^{r,2}_x)} \leq C\|f\|_{L^2_x}
		\end{align}
		for any $f\in L^2(\R^d)$.
		\item Let $(q_1, r_1), (q_2,r_2)\in B$ with $r_1, r_2<\infty$, $t_0\in \R$, and $I\subset \R$ be an interval containing $t_0$. Then there exists $C>0$ such that
		\begin{align}\label{str-est-inhomo}
		\left\|\int_{t_0}^t U_\mu(t-\tau) F(\tau) d\tau\right\|_{L^{q_1}_t(I, L^{r_1,2}_x)} \leq C\|F\|_{L^{q_2'}_t(I, L^{r_2',2}_x)}
		\end{align}
		for any $F\in L^{q_2'}(I, L^{r_2',2}(\R^d))$.
	\end{itemize}
\end{proposition}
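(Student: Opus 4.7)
The plan is to invoke the abstract Strichartz estimate machinery of Keel and Tao with two inputs already at our disposal: the $L^2$-unitarity $\|U_\mu(t)f\|_{L^2_x}=\|f\|_{L^2_x}$ (valid since $-\Delta^2+\mu\Delta$ is self-adjoint on $L^2(\R^d)$), and the $L^1\to L^\infty$ dispersive estimate $\|U_\mu(t)f\|_{L^\infty_x}\leq C|t|^{-d/4}\|f\|_{L^1_x}$ which is the endpoint case $r=\infty$ of \eqref{dis-est-lore}. Setting $\sigma=d/4$, the Keel-Tao $\sigma$-admissibility condition $\frac{2}{q}+\frac{2\sigma}{r}=\sigma$ becomes $\frac{4}{q}+\frac{d}{r}=\frac{d}{2}$, which is precisely the biharmonic admissibility condition, so the class of pairs handled by Keel-Tao matches the class $(q,r)\in B$ appearing in the statement.

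For the homogeneous estimate \eqref{str-est-homo} I would run the standard $TT^*$ argument. Setting $Tf:=U_\mu(t)f$, the boundedness $T:L^2_x\to L^q_t L^{r,2}_x$ is equivalent, by duality, to the boundedness of $TT^*F(t)=\int_\R U_\mu(t-\tau)F(\tau)d\tau$ from $L^{q'}_t L^{r',2}_x$ into $L^q_t L^{r,2}_x$. Interpolating the $L^2$-unitarity with \eqref{dis-est-lore} yields the pointwise bilinear bound $\|U_\mu(t-\tau)F(\tau)\|_{L^{r,2}_x}\leq C|t-\tau|^{-\frac{d}{4}(1-\frac{2}{r})}\|F(\tau)\|_{L^{r',2}_x}$, and the desired estimate then follows from the Hardy-Littlewood-Sobolev inequality in the time variable on $\R$ together with the numerical identity $\frac{d}{4}(1-\frac{2}{r})=1-\frac{2}{q}$ coming from biharmonic admissibility. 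Using the Lorentz refinement $L^{r,2}_x$ rather than $L^r_x$ is essential precisely in order to make the Keel-Tao endpoint argument (for $q=2$, $r=\frac{2d}{d-4}$ in dimensions $d\geq 5$) go through via real interpolation.

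For the inhomogeneous estimate \eqref{str-est-inhomo}, the diagonal case $(q_1,r_1)=(q_2,r_2)$ is a direct consequence of the same $TT^*$ analysis, while the off-diagonal cases follow by combining the homogeneous estimates for the two pairs with the Christ-Kiselev lemma to replace the full integral on $\R$ by the truncated integral $\int_{t_0}^t$; the lemma applies whenever $q_2'<q_1$, which exhausts the non-endpoint range, and the remaining diagonal endpoint $q_1=q_2=2$ is already included by the $TT^*$ computation itself. The main conceptual obstacle is to ensure that the abstract Keel-Tao framework really does upgrade to give Lorentz spatial targets $L^{r,2}_x$ rather than plain Lebesgue spaces; this is however essentially built into their original proof, which already runs through real interpolation in order to treat the endpoint, so no additional work is required beyond specializing their theorem to our setting.
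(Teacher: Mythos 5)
Your proposal is correct and follows essentially the same route as the paper, which simply cites Keel--Tao and the $TT^*$ argument with the Lorentz dispersive estimate \eqref{dis-est-lore} as input. One small inefficiency worth noting: the detour through the Christ--Kiselev lemma is unnecessary, since the Keel--Tao theorem already proves the retarded (truncated) inhomogeneous estimate $\left\|\int_{s<t} U(t)U(s)^*F(s)\,ds\right\|_{L^{q_1}_t L^{r_1}_x}\lesssim \|F\|_{L^{q_2'}_t L^{r_2'}_x}$ for all admissible pairs, including off-diagonal ones and the endpoint, directly from the energy and decay hypotheses; one then passes to $\int_{t_0}^t$ by the usual splitting into $\int_{t_0}^{\,\cdot}=\int_{t_0}^{\infty}-\int_{\cdot}^{\infty}$ and time translation, with no separate Christ--Kiselev step. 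Also, for clarity, the pointwise bilinear bound you write down is not obtained by a fresh interpolation: it is precisely \eqref{dis-est-lore} with $\rho=2$, the interpolation having already been carried out in the proof of that dispersive estimate. These are presentational points only; the argument is sound.
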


\begin{lemma}[\bf Strichartz estimates with a gain of derivatives] \label{lem-str-est-gain} ~\\
	Let $d\geq 1$, $\mu\geq 0$, $(p_1,n_1), (p_2,n_2) \in S$, $t_0 \in \R$, and $I\subset \R$ be an interval containing $t_0$. Then there exists $C>0$ such that
	\begin{align}\label{str-est-schr}
	\left\|\Delta \int_{t_0}^t U_\mu(t-\tau) F(\tau) d\tau \right\|_{L^{p_1}_t(I, L^{n_1,2}_x)} \leq C \||\nabla|^{2-\frac{2}{p_1}-\frac{2}{p_2}} F\|_{L^{p'_2}_t(I, L^{n'_2,2}_x)}
	\end{align}
	for any $F \in L^{p'_2}(I,\dot{W}^{2-\frac{2}{p_1}-\frac{2}{p_2}}L^{n'_2,2}(\R^d))$, where $|\nabla|:= (-\Delta)^{1/2}$. In particular, for $(q,r)\in B$ with $r<\infty$ and $(p,n)\in S$ with $n<\infty$, we have 
	\begin{align} \label{str-est-gain}
	\left\|\Delta \int_{t_0}^t U_\mu(t-\tau) F(\tau) d\tau \right\|_{L^q_t(I, L^{r,2}_x)} \leq C \||\nabla|^{2-\frac{2}{p}} F\|_{L^{p'}_t(I, L^{n',2}_x)}
	\end{align}
	for any $F \in L^{p'}(I,\dot{W}^{2-\frac{2}{p}}L^{n',2}(\R^d))$.
\end{lemma}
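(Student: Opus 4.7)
The plan is to prove \eqref{str-est-schr} by combining the inhomogeneous biharmonic Strichartz estimate \eqref{str-est-inhomo} with the Sobolev embedding in Lorentz spaces from Corollary~\ref{coro-sobo-embe}, and then using the Christ--Kiselev lemma to restrict from the full real line to the retarded integral $\int_{t_0}^t$. The starting observation is that a Schr\"odinger admissible pair $(p, n) \in S$ comes naturally paired (for $p > 8/d$) with a biharmonic admissible pair $(p, r) \in B$ with the same time exponent and spatial exponent $r = 2d/(d - 8/p) > n$, satisfying the key scaling identity $\frac{d}{n} - \frac{d}{r} = \frac{2}{p}$. Corollary~\ref{coro-sobo-embe} then provides the embedding $\dot W^{2/p, n, 2}(\R^d) \hookrightarrow L^{r, 2}(\R^d)$, or equivalently the Riesz-potential boundedness $|\nabla|^{-2/p} : L^{n, 2}(\R^d) \to L^{r, 2}(\R^d)$ and its dual $|\nabla|^{-2/p} : L^{r', 2}(\R^d) \to L^{n', 2}(\R^d)$.

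With this preparation, I would apply the biharmonic Strichartz estimate \eqref{str-est-inhomo} to the associated biharmonic admissible pairs $(p_1, r_1), (p_2, r_2) \in B$, taking as source term an appropriate composition of $|\nabla|^{\alpha}$ with $F$ so that the fractional-derivative powers can be redistributed across $U_\mu$ using the commutativity $|\nabla|^\alpha U_\mu = U_\mu |\nabla|^\alpha$. On the left-hand side, using Sobolev on the Riesz potential $|\nabla|^{-2/p_1} : L^{n_1, 2} \to L^{r_1, 2}$ allows one to exchange the biharmonic space $L^{r_1, 2}$ for the Schr\"odinger space $L^{n_1, 2}$ at the cost of $2/p_1$ derivatives; the same exchange is performed on the right at the cost of $2/p_2$ derivatives. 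Applying $\Delta = -|\nabla|^2$ and recombining the various powers of $|\nabla|$ then produces precisely the derivative count $2 - 2/p_1 - 2/p_2$ stated in \eqref{str-est-schr}, on the full real line; the Christ--Kiselev lemma finally restricts this to the retarded integral $\int_{t_0}^t$. The second estimate \eqref{str-est-gain} follows by the same argument with the LHS pair taken to be biharmonic admissible directly, so that no Sobolev exchange is needed on the left and only the $2/p$ derivatives from the right-hand exchange appear.

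The main obstacle I anticipate is the careful bookkeeping of fractional derivatives through the Sobolev exchanges: since the Sobolev embedding is a one-directional inequality, the powers of $|\nabla|$ inserted on each side must be chosen so that after commuting with $U_\mu$ and invoking biharmonic Strichartz, the remaining powers on both sides match what the scaling identity $\frac{d}{n_i} - \frac{d}{r_i} = \frac{2}{p_i}$ dictates. An additional technical point is the endpoint condition $p_1 > p_2'$ required by Christ--Kiselev; the cases where this fails reduce either to the trivial endpoint $(p_1, n_1) = (p_2, n_2) = (\infty, 2)$ (handled directly by Minkowski's inequality and the unitarity of $U_\mu$ on $L^2$) or follow from the abstract Keel--Tao endpoint argument \cite{KT} adapted to the Lorentz setting.
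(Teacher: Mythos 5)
Your scheme of deducing \eqref{str-est-schr} from the biharmonic Strichartz estimate \eqref{str-est-inhomo} via Sobolev embedding and Christ--Kiselev cannot work: the Sobolev embedding is only available in the wrong direction. Concretely, for a Schr\"odinger pair $(p_1,n_1)\in S$ and the matching biharmonic pair $(p_1,r_1)\in B$ one has $\frac{d}{n_1}-\frac{d}{r_1}=\frac{2}{p_1}>0$, so $n_1<r_1$. The biharmonic Strichartz estimate controls the Duhamel term in the \emph{higher}-integrability space $L^{p_1}_tL^{r_1,2}_x$, while \eqref{str-est-schr} asks for control in the \emph{lower}-integrability space $L^{p_1}_tL^{n_1,2}_x$; Corollary~\ref{coro-sobo-embe} lets one pass from $\dot W^{2/p_1}L^{n_1,2}$ to $L^{r_1,2}$, never the reverse, and there is no estimate of $L^{n_1,2}$ by $L^{r_1,2}$ on $\R^d$. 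The source side fails symmetrically: one must bound $\|G\|_{L^{r_2',2}_x}$ with $r_2'<n_2'$ by a fractional derivative of $G$ in $L^{n_2',2}_x$, again the direction Sobolev does not give. Applying Sobolev in the admissible directions to \eqref{str-est-inhomo} only shows $\||\nabla|^{2-2/p}G\|_{L^{n',2}}\lesssim\|\Delta G\|_{L^{r_2',2}}$, i.e.\ it shows \eqref{str-est-gain} is \emph{strictly stronger} than \eqref{str-est-inhomo}, not a consequence of it. There is also the range mismatch you flag only in passing: biharmonic admissibility forces $q\geq 8/d$ with $r<\infty$ when $d=4$, so for $d=3$ and $d=4$ the Schr\"odinger endpoint $p=2$ used in Lemma~\ref{lem-non-est-2}, which is central to the low-dimensional scattering, has no associated biharmonic pair at all, and your scheme would leave exactly the cases the paper is after uncovered.

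The paper's actual proof is frequency-localized, in the spirit of Pausader: one introduces a Littlewood--Paley decomposition, rescales each dyadic block via $d_j$ to unit frequency, and observes that the rescaled, localized propagators $\Vc_{\mu,j}(t)=d_jP_jU_\mu(t)$ obey the \emph{Schr\"odinger} decay $\|\Vc_{\mu,j}(t)\Vc_{\mu,j}(\tau)^*\|_{L^1\to L^\infty}\lesssim|t-\tau|^{-d/2}$. Keel--Tao applied at each dyadic scale then yields Strichartz estimates for Schr\"odinger-admissible pairs, carrying the factor $2^{j(d/n_1-d/n'_2)}=2^{-j(2/p_1+2/p_2)}$; summing over $j$ via the square-function characterizations \eqref{squa-func-est}--\eqref{equi-norm} and absorbing the $2^{2j}$ from $\Delta$ produces exactly the gain $|\nabla|^{2-2/p_1-2/p_2}$. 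The derivative gain is thus an intrinsically frequency-local phenomenon---the fourth-order symbol at scale $2^j$ looks like a rescaled second-order one---and cannot be recovered from global Lorentz-space manipulations of the unlocalized Strichartz estimate.
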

\begin{proof}
	We first show how to get \eqref{str-est-gain} from \eqref{str-est-schr}. We take $\overline{r}$ such that $\frac{d}{\overline{r}}=\frac{d}{r}+\frac{2}{q}$. By Sobolev embedding in Lorentz spaces (Corollary \ref{coro-sobo-embe}), we have
	\[
	\|f\|_{L^{r,2}_x} \leq C \||\nabla|^{\frac{2}{q}} f\|_{L^{\overline{r},2}_x}.
	\]
	Since $(q,\overline{r})\in S$, we apply \eqref{str-est-schr} with $|\nabla|^{\frac{2}{q}} F$ in place of $F$ and get
	\begin{align*}
	\left\|\Delta \int_{t_0}^t U_\mu(t-\tau) F(\tau) d\tau \right\|_{L^q_t(I,L^{r,2}_x)} &\lesssim \left\|\Delta \int_{t_0}^t U_\mu(t-\tau) |\nabla|^{\frac{2}{q}} F(\tau) d\tau\right\|_{L^q_t(I,L^{\overline{r},2}_x)} \\
	&\lesssim \||\nabla|^{2-\frac{2}{p}} F\|_{L^{p'}_t(I,L^{n',2}_x)}
	\end{align*}
	which is \eqref{str-est-gain}.
	
	To see \eqref{str-est-schr}, we need some preliminaries. Let $\eta \in C^\infty_0(\R^d)$ be a non-negative function supported in $\{1/2<|\xi|<2\}$ satisfying 
	\[
	\sum_{j\in \Z} \eta(2^j \xi) =1, \quad \forall \xi \in \R^d \backslash \{0\}.
	\]
	Define the Fourier multiplier
	\[
	Q_j f:= \eta(2^{-j} \cdot) \ast f, \quad j \in \Z
	\]
	and the square function
	\[
	Sf(x):= \Big( \sum_{j\in \Z} |Q_jf(x)|^2\Big)^{1/2}, \quad x\in \R^d.
	\]
	It was proved (see \cite[Lemma 2.5]{AT}) that for $1<r<\infty$ and $1\leq \rho <\infty$,
	\begin{align} \label{squa-func-est}
	\|f\|_{L^{r,\rho}_x} \sim \|Sf\|_{L^{r,\rho}_x}
	\end{align}
	and for $s\geq 0$,
	\begin{align} \label{equi-norm}
	\Big(\sum_{j\in \Z} |Q_j (-\Delta)^{s/2} f|^2\Big)^{1/2} \sim \Big(\sum_{j\in \Z} 2^{2js} |\tilde{Q}_jf|^2\Big)^{1/2},
	\end{align}
	where 
	\[
	\tilde{Q}_jf= \tilde{\eta}(2^{-j}\cdot) \ast f
	\]
	with $\tilde{\eta}\in C^\infty_0(\R^d)$ a non-negative function supported in $\{1/2<|\xi|<2\}$ and satisfying $\tilde{\eta} \eta = \eta$. 
	
	Now, for $j\in \Z$, we denote
	\[
	\Vc_{\mu,j}(t):= d_j P_j U_\mu(t),
	\]
	where $d_j f(x):= 2^{j\frac{d}{2}} f(2^jx)$ is the rescaling operator and $P_jf=\sqrt{\tilde{\eta}}(2^{-j})\ast f$.	By Plancherel's theorem, the isometry of $d_j$ on $L^2(\R^d)$ implies 
	\[
	\|\Vc_{\mu,j}(t)\|_{L^2_x \to L^2_x} \leq C, \quad \forall t\in \R.
	\]
	In addition, we have from \cite[(3.16)]{Pausader-DPDE} that 
	\begin{align*}
	\|\Vc_{\mu,j}(t)\Vc_{\mu,j}(\tau)^*\|_{L^1_x\to L^\infty_x} \leq C|t-\tau|^{-\frac{d}{2}}, \quad \forall t\ne \tau.
	\end{align*}
	Applying the result of Keel and Tao \cite{KT}, we obtain
	\begin{align*}
	\left\| \int_{t_0}^t d_j\tilde{Q}_j U_\mu(t-\tau) d_j^*F(\tau) d\tau\right\|_{L^{p_1}_t(I,L^{n_1,2}_x)} &= \left\| \int_{t_0}^t \Vc_{\mu,j}(t)\Vc_{\mu,j}(\tau)^* F(\tau) d\tau\right\|_{L^{p_1}_t(I,L^{n_1,2}_x)} \nonumber\\
	&\leq C\|F\|_{L^{p'_2}_t(I, L^{n'_2,2}_x)}. 
	\end{align*}
	As $\|d_j^*\|_{L^{n_1,2}_x \to L^{n_1,2}_x} \leq C2^{j\left(\frac{d}{n_1}-\frac{d}{2}\right)}$ and $\|d_j\|_{L^{n'_2,2}_x \to L^{n'_2,2}_x} \leq C2^{j\left(\frac{d}{2}-\frac{d}{n'_2}\right)}$, we infer that
	\begin{align*}
	\left\|\int_{t_0}^t \tilde{Q}_j U_\mu(t-\tau) F(\tau)d\tau\right\|_{L^{p_1}_t(I,L^{n_1,2}_x)} &\leq C2^{j\left(\frac{d}{n_1}-\frac{d}{2}\right)} \left\|\int_{t_0}^t d_j \tilde{Q}_j U_\mu(t-\tau) F(\tau)d\tau\right\|_{L^{p_1}_t(I,L^{n_1,2}_x)} \\
	&=C2^{j\left(\frac{d}{n_1}-\frac{d}{2}\right)} \left\|\int_{t_0}^t d_j \tilde{Q}_j U_\mu(t-\tau) d_j^* d_j F(\tau)d\tau\right\|_{L^{p_1}_t(I,L^{n_1,2}_x)} \\
	&\leq C2^{j\left(\frac{d}{n_1}-\frac{d}{2}\right)} \|d_jF\|_{L^{p_2'}_t(I,L^{n_2',2}_x)} \\
	&\leq C2^{j\left(\frac{d}{n_1}-\frac{d}{n'_2}\right)} \|F\|_{L^{p_2'}_t(I,L^{n_2',2}_x)}.
	\end{align*}
	Applying this inequality with $\tilde{\tilde{Q}}_j F$ in place of $F$, we get
	\[
	\left\| \int_{t_0}^t \tilde{Q}_j U_\mu(t-\tau) F(\tau) d\tau\right\|_{L^{p_1}_t(I,L^{n_1,2}_x)} \leq 2^{j\left(\frac{d}{n_1}-\frac{d}{n'_2}\right)}\|\tilde{\tilde{Q}}_j F\|_{L^{p'_2}_t(I, L^{n'_2,2}_x)},
	\]
	where $\tilde{\tilde{Q}}_j f :=\tilde{\tilde{\eta}} (2^{-j}\cdot) \ast f$ with $\tilde{\tilde{\eta}} \in C^\infty_0(\R^d)$ supported in $\{1/2<|\xi|<2\}$ and satisfying $\tilde{\tilde{\eta}} \tilde{\eta}=\tilde{\eta}$. By \eqref{squa-func-est}, \eqref{equi-norm}, and Minkowski's inequality, we have
	\begin{align*}
	\left\| \Delta \int_{t_0}^t U_\mu(-\tau) F(\tau) d\tau\right\|_{L^{p_1}_t(I,L^{n_1,2}_x)} &\sim \Big\|\Big( \sum_{j\in \Z} \Big|Q_j \Delta  \int_{t_0}^t U_\mu(-\tau) F(\tau) d\tau\Big|^2\Big)^{1/2} \Big\|_{L^{p_1}_t(I,L^{n_1,2}_x)} \\
	&\sim \Big\| \Big(\sum_{j\in \Z} 2^{4j} \Big|\tilde{Q}_j \int_{t_0}^t U_\mu(-\tau) F(\tau) d\tau\Big|^2 \Big)^{1/2} \Big\|_{L^{p_1}_t(I,L^{n_1,2}_x)} \\
	&\lesssim \Big(\sum_{j\in \Z} 2^{4j} \Big\| \tilde{Q}_j \int_{t_0}^t U_\mu(-\tau) F(\tau) d\tau \Big\|^2_{L^{p_1}_t(I,L^{n_1,2}_x)} \Big)^{1/2} \\
	&\lesssim \Big( \sum_{j\in \Z} 2^{4j+2j\left(\frac{d}{n_1}-\frac{d}{n'_2}\right)} \|\tilde{\tilde{Q}}_j F\|^2_{L^{p'_2}_t(I, L^{n'_2,2}_x)} \Big)^{1/2} \\
	&\lesssim \Big\| \Big(\sum_{j\in \Z} 2^{2j\(2+\frac{d}{n_1}-\frac{d}{n'_2}\)} |\tilde{\tilde{Q}}_j F|^2 \Big)^{1/2}\Big\|_{L^{p'_2}_t(I, L^{n'_2,2}_x)} \\
	&\sim \Big\| \Big(\sum_{j\in \Z} |Q_j |\nabla|^{2-\frac{2}{p_1}-\frac{2}{p_2}}F|^2 \Big)^{1/2}\Big\|_{L^{p'_2}_t(I, L^{n'_2,2}_x)}  \\
	&\sim \||\nabla|^{2-\frac{2}{p_1}-\frac{2}{p_2}} F\|_{L^{p'_2}_t(I,L^{n'_2,2}_x)}
	\end{align*}
	which is \eqref{str-est-schr}.
\end{proof}

\begin{lemma}[\bf Strichartz estimates for non biharmonic admissible pairs] \label{lem-str-est-non-adm}~\\
	Let $d\geq 1$, $\mu\geq 0$, $(q,r)\in B$ with $2<r<\infty$, and $1\leq \rho \leq \infty$. Fix $k>\frac{q}{2}$ and define $m$ by
	\begin{align} \label{cond-kmq}
	\frac{1}{k}+\frac{1}{m} =\frac{2}{q}.
	\end{align}
	Then there exists $C>0$ such that for all interval $I\subset \R$ with $0\in \overline{I}$,
	\begin{align}\label{str-est-non-adm}
	\left\|\int_0^t U_\mu(t-\tau) F(\tau) d\tau \right\|_{L^k_t(I, L^{r,\rho}_x)} \leq C\|F\|_{L^{m'}_t(I, L^{r',\rho}_x)}
	\end{align}
	for any $F\in L^{m'}(I, L^{r',\rho}(\R^d))$.
\end{lemma}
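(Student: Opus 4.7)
The plan is to reduce \eqref{str-est-non-adm} to a one-dimensional convolution inequality on $\R$ by absorbing all the biharmonic dispersion into a scalar kernel. First I would combine the admissibility $\frac{4}{q}+\frac{d}{r}=\frac{d}{2}$ with the dispersive estimate \eqref{dis-est-lore} to obtain the pointwise-in-time bound
\[
\|U_\mu(t-\tau)F(\tau)\|_{L^{r,\rho}_x} \leq C|t-\tau|^{-2/q}\|F(\tau)\|_{L^{r',\rho}_x}, \quad t\neq\tau,
\]
after observing that $\frac{d}{4}\left(1-\frac{2}{r}\right)=\frac{2}{q}$ precisely when $(q,r)\in B$.

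Next I would take the $L^{r,\rho}_x$-norm of the Duhamel integral and apply Minkowski's integral inequality, which is legitimate because for $r>1$ and $\rho\geq 1$ the Lorentz space $L^{r,\rho}(\R^d)$ is a Banach space under the equivalent norm $\|\cdot\|^*_{L^{r,\rho}}$ recalled in the preliminaries. This pushes the spatial norm inside the $\tau$-integral and bounds the spatial norm of the Duhamel term by $\int_0^t |t-\tau|^{-2/q}\|F(\tau)\|_{L^{r',\rho}_x}\,d\tau$. Setting $g(\tau):=\|F(\tau)\|_{L^{r',\rho}_x}\mathbf{1}_I(\tau)$ and extending by zero to all of $\R$, this expression is dominated pointwise by the full-line convolution $(|t|^{-2/q}*g)(t)$. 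Taking the $L^k_t(\R)$-norm and invoking the Lorentz convolution inequality from Section \ref{S2} with the weak kernel $|t|^{-2/q}\in L^{q/2,\infty}(\R)$ yields
\[
\bigl\||t|^{-2/q}*g\bigr\|_{L^k_t(\R)}\leq C\|g\|_{L^{m'}_t(\R)}
\]
exactly under the scaling identity $1+\frac{1}{k}=\frac{2}{q}+\frac{1}{m'}$, which is precisely the hypothesis $\frac{1}{k}+\frac{1}{m}=\frac{2}{q}$. Chaining these steps gives \eqref{str-est-non-adm}.

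The main obstacle is verifying all the hypotheses of the Lorentz convolution inequality, namely $1<q/2$, $1<m'<\infty$, $1<k<\infty$, and the index compatibility $\frac{1}{k}\leq\frac{1}{m'}$ (equivalently $k\geq m'$). The assumption $k>\frac{q}{2}$ gives $\frac{1}{m}>0$, hence $m<\infty$ and $m'>1$. The decisive condition $q>2$, which implies both $q/2>1$ and $k\geq m'$, follows from $(q,r)\in B$ together with $r>2$ in every dimension except at the single endpoint $(q,r)=(2,\frac{2d}{d-4})$ in $d\geq 5$. That borderline pair can be excluded (it is not needed for the applications of the lemma in Section \ref{S5}) or handled separately by interpolating the standard inhomogeneous Strichartz estimate \eqref{str-est-inhomo} at $\rho=2$ with a nearby non-endpoint pair produced by the argument above. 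With $q>2$ secured, the remaining conditions follow at once from $k>\frac{q}{2}$, and the convolution estimate closes the argument.
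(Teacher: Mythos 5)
Your proof is correct and takes essentially the same approach as the paper's: apply the Lorentz dispersive estimate and Minkowski's inequality to dominate the Duhamel term by the one-dimensional convolution $(|t|^{-2/q}*G)(t)$ with $G(t)=\mathds{1}_I(t)\|F(t)\|_{L^{r',\rho}_x}$, then conclude by Hardy--Littlewood--Sobolev (equivalently, the weak-kernel convolution inequality with $|t|^{-2/q}\in L^{q/2,\infty}(\R)$) under the scaling relation \eqref{cond-kmq}. Your remark about the borderline pair $q=2$ (which the hypothesis $2<r<\infty$ does not exclude when $d\geq 5$) is a valid observation about the lemma's statement, but it is harmless since the application in Section~\ref{S5} always uses $q>2$ in the intercritical regime.
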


\begin{proof}
	Using \eqref{dis-est-lore}, we have for $t\in I$,
	\begin{align*}
	\left\| \int_0^t e^{i(t-\tau)\Delta} F(\tau) d\tau \right\|_{L^{r,\rho}_x} &\leq C\int_0^t |t-\tau|^{-\frac{d}{4}\(1-\frac{2}{r}\)} \|F(\tau)\|_{L^{r',\rho}_x} d\tau \\
	&=C\int_0^t |t-\tau|^{-\frac{2}{q}} \|F(\tau)\|_{L^{r',\rho}_x} d\tau \\
	&\leq C\int_{I} |t-\tau|^{-\frac{2}{q}} \|F(\tau)\|_{L^{r',\rho}_x} d\tau \\
	&= C (|\cdot|^{-\frac{2}{q}} \ast G)(t),
	\end{align*}
	where $G(t)=\mathds{1}_I(t)\|F(t)\|_{L^{r',\rho}_x}$. The result follows by the standard Hardy-Littlewood-Sobolev inequality using \eqref{cond-kmq}.
\end{proof}

\section{Nonlinear estimates}
\label{S3}
\setcounter{equation}{0}
Let $d\geq 1$, $0<b<\min\left\{\frac{d}{2},4\right\}$, $\alpha>\frac{8-2b}{d}$, and $\alpha<\frac{8-2b}{d-4}$ if $d\geq 5$. Denote
\begin{align} \label{qrkm}
\begin{aligned}
q &=\frac{8(\alpha+2)}{d\alpha+2b}, & r &= \frac{d(\alpha+2)}{d-b}, \\
k &=\frac{4\alpha(\alpha+2)}{8-2b-(d-4)\alpha}, & m&=\frac{4\alpha(\alpha+2)}{d\alpha^2+(d-4+2b)\alpha-8+2b}.
\end{aligned}
\end{align}
We readily check that $(q,r) \in B$, $2<r<\infty$, $k>\frac{q}{2}$, and $\frac{1}{k}+\frac{1}{m}=\frac{2}{q}$.

\begin{lemma} [\bf Nonlinear estimates 1] \label{lem-non-est-1} ~\\
	Let $d\geq 1$, $0<b<\min\left\{\frac{d}{2},4\right\}$, $\alpha>\frac{8-2b}{d}$, and $\alpha<\frac{8-2b}{d-4}$ if $d\geq 5$. Let $q,r,k,m$ be as in \eqref{qrkm} and $I\subset \R$ be an interval. Then we have
	\begin{align} 
	\||x|^{-b} |u|^\alpha u\|_{L^{m'}_t(I, L^{r',2}_x)} &\lesssim \|u\|^{\alpha+1}_{L^k_t(I, L^{r,2}_x)}, \label{non-est-1} \\
	\||x|^{-b} |u|^\alpha u\|_{L^{q'}_t(I, L^{r',2}_x)} &\lesssim \|u\|^\alpha_{L^k_t(I, L^{r,2}_x)} \|u\|_{L^q_t(I,L^{r,2}_x)}. \label{non-est-2}
	\end{align}
\end{lemma}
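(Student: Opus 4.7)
The plan is to establish both estimates by first applying a pointwise-in-$t$ Hölder inequality in the Lorentz spaces $L^{r,\rho}(\R^d)$, using the crucial fact that $|x|^{-b}\in L^{d/b,\infty}(\R^d)$, and then distributing the time integrability via an ordinary Hölder inequality in $t$. The power rule $\||f|^\theta\|_{L^{r,\rho}} = \|f\|^\theta_{L^{\theta r,\theta\rho}}$ is what converts powers of $u$ inside a Lorentz norm into the correct Lorentz norm of $u$ itself, and the strength of working in Lorentz spaces (rather than Lebesgue spaces) is that the singularity $|x|^{-b}$ belongs exactly to the weak space $L^{d/b,\infty}$, so no spatial decomposition into inside/outside a ball is needed.

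For \eqref{non-est-1}, I would first write $|x|^{-b}|u|^{\alpha}u = |x|^{-b}\cdot|u|^{\alpha+1}$ (up to a phase) and apply the two-factor Hölder inequality in Lorentz spaces with indices $(d/b,\infty)$ for $|x|^{-b}$ and $(r/(\alpha+1),2/(\alpha+1))$ for $|u|^{\alpha+1}$. The scaling identity $1/r' = b/d + (\alpha+1)/r$ follows directly from $r = d(\alpha+2)/(d-b)$, and the Lorentz second-index condition $1/2 \le 0 + (\alpha+1)/2$ is trivially satisfied. The power rule then yields
\[
\||x|^{-b}|u|^{\alpha+1}\|_{L^{r',2}_x} \lesssim \|u\|^{\alpha+1}_{L^{r,2}_x}.
\]
Taking $L^{m'}_t$-norms and applying Hölder in time with $\alpha+1$ equal factors then gives the bound in terms of $\|u\|^{\alpha+1}_{L^{(\alpha+1)m'}_t L^{r,2}_x}$. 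So the proof reduces to checking the identity $(\alpha+1)m' = k$, which is just the algebraic statement $1/m' = (\alpha+1)/k$. Combining with $1/k+1/m=2/q$, this is equivalent to $1-2/q = \alpha/k$, and a direct computation from \eqref{qrkm} shows both sides equal $(8-2b-(d-4)\alpha)/(4(\alpha+2))$.

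For \eqref{non-est-2}, the idea is identical but I would instead apply a three-factor Hölder in space, factorizing $|x|^{-b}|u|^\alpha u$ into $|x|^{-b}$, $|u|^\alpha$, and $u$, with Lorentz indices $(d/b,\infty)$, $(r/\alpha,2/\alpha)$, and $(r,2)$ respectively. The spatial Hölder identity $1/r' = b/d + \alpha/r + 1/r$ is the same as before, and the power rule produces
\[
\||x|^{-b}|u|^{\alpha}u\|_{L^{r',2}_x}\lesssim \|u\|^{\alpha}_{L^{r,2}_x}\|u\|_{L^{r,2}_x}.
\]
Taking $L^{q'}_t$-norms and applying Hölder in time with exponents $(k/\alpha, q)$ distributes $\alpha$ factors to $L^k_t L^{r,2}_x$ and the remaining factor to $L^q_t L^{r,2}_x$. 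The compatibility condition $1/q' = \alpha/k + 1/q$ simplifies to $\alpha/k = 1-2/q$, which is precisely the identity already verified above.

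The work here is essentially bookkeeping: no fine harmonic analysis is needed beyond Hölder and the power rule, and the only place one might trip is in matching the second Lorentz indices (one must keep $\rho$-parameters small enough that $1/\rho \le \sum 1/\rho_i$ holds). The numerical consistency of $(q,r,k,m)$ with the relations $(q,r)\in B$ and $1/k+1/m=2/q$ already noted in the lemma's setup makes all the index checks pass automatically, so I expect no substantive obstacle.
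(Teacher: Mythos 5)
Your proposal is correct and follows essentially the same route as the paper: H\"older in Lorentz spaces exploiting $|x|^{-b}\in L^{d/b,\infty}(\R^d)$, the power rule, and H\"older in time, with the same index arithmetic ($1/m'=(\alpha+1)/k$ and $1/q'=\alpha/k+1/q$). The one point worth tightening is the second Lorentz index: your choices $2/(\alpha+1)$ and $2/\alpha$ drop below $1$ whenever $\alpha>1$, which puts you outside the hypotheses of the H\"older lemma as stated (it requires $\rho_i\ge 1$; the constraint $1/\rho\le\sum 1/\rho_i$ you flag is the easy one), whereas the paper stays within the lemma by placing $|u|^\alpha$ in $L^{r/\alpha,\infty}_x$ and $u$ in $L^{r,2}_x$, then downgrading $\|u\|_{L^{r,\infty}_x}\lesssim\|u\|_{L^{r,2}_x}$ at the end; adopting this bookkeeping, or applying H\"older with second index $2$ and then using $\|u\|_{L^{r,2(\alpha+1)}_x}\lesssim\|u\|_{L^{r,2}_x}$, makes your argument fully rigorous.
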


\begin{proof}
	By H\"older's inequality in Lorentz spaces, we have
	\begin{align*}
	\||x|^{-b} |u|^\alpha u\|_{L^{r',2}_x} &\leq \||x|^{-b}\|_{L^{\frac{d}{b},\infty}_x} \||u|^\alpha u\|_{L^{\overline{r},2}_x} \\
	&\lesssim \||u|^\alpha\|_{L^{\frac{r}{\alpha},\infty}_x} \|u\|_{L^{r,2}_x} \\
	&\lesssim \|u\|^\alpha_{L^{r,\infty}_x} \|u\|_{L^{r,2}_x} \\
	&\lesssim \|u\|^{\alpha+1}_{L^{r,2}_x},
	\end{align*}
	where $1<\overline{r}<\infty$ is such that
	\[
	\frac{1}{r'}=\frac{b}{d}+\frac{1}{\overline{r}}, \quad \frac{1}{\overline{r}} = \frac{\alpha+1}{r} \quad \text{or} \quad \overline{r} = \frac{d(\alpha+2)}{(d-b)(\alpha+1)}.
	\]
	As $\frac{1}{m'}=\frac{\alpha+1}{k}$, the standard H\"older inequality in time yields
	\[
	\||x|^{-b} |u|^\alpha u\|_{L^{m'}_t L^{r',2}_x} \lesssim \|u\|^{\alpha+1}_{L^k_t, L^{r,2}_x}
	\]
	which is \eqref{non-est-1}. Estimate \eqref{non-est-2} is treated similarly using the fact that $\frac{1}{q'}=\frac{\alpha}{k}+\frac{1}{q}$.
\end{proof}

\begin{lemma}[\bf Nonlinear estimates 2] \label{lem-non-est-2} ~\\
	Let $d\geq 3$, $0<b<\min\left\{\frac{d}{2},4\right\}$, $\alpha>\frac{8-2b}{d}$, and $\alpha<\frac{8-2b}{d-4}$ if $d\geq 5$. Let $k, r$ be as in \eqref{qrkm} and $I\subset \R$ be an interval. Then there exist $(p,n)\in S$ and $(\beta, \nu)\in B$ with $n,\nu<\infty$ such that
	\begin{align}
	\||\nabla|^{2-\frac{2}{p}}[|x|^{-b}|u|^\alpha u]\|_{L^{p'}_t(I,L^{n',2}_x)} \lesssim \|u\|^\alpha_{L^k_t(I, L^{r,2}_x)} \|\Delta u\|_{L^\beta_t(I,L^{\nu,2}_x)}. \label{non-est-3}
	\end{align}
\end{lemma}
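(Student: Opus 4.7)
The plan is to distribute the fractional operator $|\nabla|^{s}$, $s:=2-\tfrac{2}{p}$, between the weight $|x|^{-b}$ and the nonlinearity $|u|^{\alpha}u$ via the product rule in Sobolev-Lorentz spaces, bound each resulting piece by a combination of Hardy's inequality, the chain rule, H\"older's inequality, and Sobolev embedding (all in Lorentz spaces), and close with H\"older in time.

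I would first fix $(p,n)\in S$ and $(\beta,\nu)\in B$ with $n,\nu<\infty$ (deferring their choice) and write, via the product rule,
\[
\||\nabla|^{s}(|x|^{-b}|u|^{\alpha}u)\|_{L^{n',2}_{x}}\lesssim\underbrace{\||x|^{-b-s}|u|^{\alpha+1}\|_{L^{n',2}_{x}}}_{\mathrm{(I)}}+\underbrace{\||x|^{-b}\,|\nabla|^{s}(|u|^{\alpha}u)\|_{L^{n',2}_{x}}}_{\mathrm{(II)}},
\]
using $|\nabla|^{s}|x|^{-b}\sim|x|^{-b-s}\in L^{d/(b+s),\infty}(\R^{d})$, valid when $b+s<d$. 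For $\mathrm{(I)}$, H\"older in Lorentz applied to the splitting $|u|^{\alpha+1}=|u|^{\alpha}\cdot u$, followed by Sobolev embedding, yields $\mathrm{(I)}\lesssim\|u\|^{\alpha}_{L^{r,2}_{x}}\|u\|_{L^{\nu_{1},2}_{x}}\lesssim\|u\|^{\alpha}_{L^{r,2}_{x}}\|\Delta u\|_{L^{\nu,2}_{x}}$ with $\tfrac{1}{\nu_{1}}=\tfrac{1}{\nu}-\tfrac{2}{d}$. For $\mathrm{(II)}$, using $|x|^{-b}\in L^{d/b,\infty}$ together with H\"older, the chain rule in Lorentz, and Sobolev embedding, I would get $\mathrm{(II)}\lesssim\|u\|^{\alpha}_{L^{r,2}_{x}}\||\nabla|^{s}u\|_{L^{\nu_{2},2}_{x}}\lesssim\|u\|^{\alpha}_{L^{r,2}_{x}}\|\Delta u\|_{L^{\nu,2}_{x}}$ with $\tfrac{1}{\nu_{2}}=\tfrac{1}{\nu}-\tfrac{2-s}{d}$. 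A final H\"older in time with $\tfrac{1}{p'}=\tfrac{\alpha}{k}+\tfrac{1}{\beta}$ completes the estimate.

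A short computation using \eqref{qrkm} and the admissibility identities $\tfrac{2}{p}+\tfrac{d}{n}=\tfrac{d}{2}$, $\tfrac{4}{\beta}+\tfrac{d}{\nu}=\tfrac{d}{2}$ shows that both the spatial and the temporal exponent matchings reduce to the single scalar identity
\[
\frac{1}{p}+\frac{1}{\beta}=\frac{d\alpha+2b}{4(\alpha+2)}.
\]

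The main obstacle is to produce concrete $(p,n)\in S$ and $(\beta,\nu)\in B$ with $n,\nu<\infty$ that simultaneously satisfy this identity, biharmonic/Schr\"odinger admissibility, and the two Sobolev conditions $\nu<\tfrac{d}{2}$ (from $\mathrm{(I)}$) and $\nu<\tfrac{d}{2-s}$ (from $\mathrm{(II)}$). For $d\geq 5$ this is straightforward; in low dimensions $d\in\{3,4\}$ the constraint $\nu<\tfrac{d}{2}$ conflicts with biharmonic admissibility $\nu\geq 2$, so $\mathrm{(I)}$ must be handled by a refinement, for instance a Morrey-type embedding $\dot{W}^{2}L^{\nu,2}(\R^{d})\hookrightarrow L^{\infty}(\R^{d})$ valid when $\nu>\tfrac{d}{2}$, which replaces $\|u\|_{L^{\nu_{1},2}_{x}}$ by $\|u\|_{L^{\infty}_{x}}$ and distributes only $\alpha$ copies of $u$ in $L^{r,2}_{x}$. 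The careful dimension-by-dimension case analysis needed to balance all constraints simultaneously is the delicate technical point; it is also where the hypothesis $d\geq 3$ enters, since below this dimension the available range of Schr\"odinger-admissible pairs is too narrow to close the estimate.
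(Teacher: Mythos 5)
Your skeleton (fractional product rule, H\"older/chain rule/Sobolev in Lorentz spaces, H\"older in time, and the scalar identity $\tfrac{1}{p}+\tfrac{1}{\beta}=\tfrac{d\alpha+2b}{4(\alpha+2)}$) agrees with the paper's, and you correctly identify the piece $\mathrm{(I)}$ as the place where low dimensions cause trouble. However, the way you treat $\mathrm{(I)}$ is where the argument genuinely breaks, and your proposed fix does not repair it. Splitting $|u|^{\alpha+1}=|u|^{\alpha}\cdot u$ and Sobolev-embedding the single factor $u$ from $\dot W^{2}L^{\nu,2}$ forces $\nu<\tfrac{d}{2}$, which is incompatible with biharmonic admissibility $\nu\geq 2$ when $d\in\{3,4\}$. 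Your Morrey-type workaround ($\dot W^{2}L^{\nu,2}\hookrightarrow L^{\infty}$ for $\nu>\tfrac{d}{2}$) then fixes the spatial H\"older balance for $\mathrm{(I)}$ at $\tfrac{1}{n'}=\tfrac{b+s}{d}+\tfrac{\alpha}{r}$, while $\mathrm{(II)}$ still demands $\tfrac{1}{n'}=\tfrac{b}{d}+\tfrac{\alpha}{r}+\tfrac{1}{\nu}-\tfrac{2-s}{d}$; equating the two forces $\nu=\tfrac{d}{2}$ exactly, contradicting the hypothesis $\nu>\tfrac{d}{2}$ under which the embedding was invoked. So the workaround produces an over-determined system, not a proof.

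The idea you are missing is the one the paper uses to make $\mathrm{(I)}$ harmless: do not Sobolev-embed a bare factor of $u$, but instead convert $|x|^{-s}(|u|^{\alpha}u)$ back into $|\nabla|^{s}(|u|^{\alpha}u)$. In the paper's Case 1 ($s=1$, i.e.\ $(p,n)=(2,\tfrac{2d}{d-2})$) this is exactly Hardy's inequality in Lorentz spaces, $\||x|^{-1}f\|_{L^{\delta,2}}\lesssim\|\nabla f\|_{L^{\delta,2}}$ for $1<\delta<d$; in Case 2 (fractional $s$) the same effect is achieved by H\"oldering $|\nabla|^{s}(|x|^{-b})$ against $\||u|^{\alpha}u\|_{L^{\theta,2}}$ and then using Sobolev embedding $\dot W^{s}L^{\delta,2}\hookrightarrow L^{\theta,2}$. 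Either way $\mathrm{(I)}$ reduces to the very same expression as $\mathrm{(II)}$, and the only Sobolev step left is $\||\nabla|^{s}u\|_{L^{\sigma,2}}\lesssim\|\Delta u\|_{L^{\nu,2}}$, which requires merely $\nu<\tfrac{d}{2-s}$ — equal to $d$ when $s=1$ and therefore compatible with $\nu\geq 2$ in all $d\geq 3$. Finally, the case split in the paper is not purely dimensional as your outline suggests: with $s=1$ and the Hardy trick the argument already covers $d\geq 4$ and $d=3$ with $b>1$; the separate construction with a $b$-dependent $s=\tfrac{4-b}{7-b}<1$ is needed only for $d=3$ and $0<b\leq 1$, and the reason is the positivity of $\beta=\tfrac{4(\alpha+2)}{(d-2)\alpha-4+2b}$, not a Sobolev obstruction.
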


\begin{proof}
	We consider two cases:
	
	{\bf Case 1.} Let $d\geq 3$, $0<b<\min\left\{\frac{d}{2},4\right\}$, $\alpha>\frac{8-2b}{d}$, and $\alpha<\frac{8-2b}{d-4}$ if $d\geq 5$. Assume in addition that $b> 1$ if $d=3$. In this case, we take $(p,n)=\(2,\frac{2d}{d-2}\) \in S$, hence $2-\frac{2}{p} =1$. We introduce the following exponents:
	\begin{align*}
	\delta:&= \frac{2d}{d+2-2b}, & \sigma:&=\frac{2d(\alpha+2)}{2d+4-2b-(d-2)\alpha}, \\
	\beta:&=\frac{4(\alpha+2)}{(d-2)\alpha-4+2b}, & \nu:&=\frac{2d(\alpha+2)}{2(d+4-2b)-(d-4)\alpha}.
	\end{align*}
	Since $\alpha>\frac{8-2b}{d}$, the condition $b>1$ when $d=3$ is needed to ensure $(d-2)\alpha -4+2b>0$, hence $\beta$ has positive value. One readily check that $(\beta,\nu) \in B$, $1<\delta<d$, and
	\[
	\frac{d+2}{2d}=\frac{b}{d}+\frac{1}{\delta}, \quad \frac{1}{\delta}=\frac{\alpha}{r}+\frac{1}{\sigma}, \quad \frac{1}{\sigma}=\frac{1}{\nu}-\frac{1}{d}, \quad \frac{1}{2} =\frac{\alpha}{k} +\frac{1}{\beta}.
	\]
	We have
	\[
	\|\nabla[|x|^{-b}|u|^\alpha u]\|_{L^2_t L^{\frac{2d}{d+2},2}_x} \leq \||x|^{-b} \nabla(|u|^\alpha u)\|_{L^2_t L^{\frac{2d}{d+2},2}_x} + \|\nabla (|x|^{-b}) |u|^\alpha u\|_{L^2_t L^{\frac{2d}{d+2},2}_x} = (\text{A}) + (\text{B}).
	\]
	By H\"older's inequality, the chain rule, and Sobolev embedding in Lorentz spaces, we have
	\begin{align*}
	\||x|^{-b} \nabla(|u|^\alpha u)\|_{L^{\frac{2d}{d+2},2}_x} &\leq \||x|^{-b}\|_{L^{\frac{d}{b},\infty}_x} \|\nabla (|u|^\alpha u)\|_{L^{\delta,2}_x} \\
	&\lesssim \||u|^\alpha\|_{L^{\frac{r}{\alpha},\infty}_x} \|\nabla u\|_{L^{\sigma,2}_x} \\
	&\lesssim \|u\|^\alpha_{L^{r,\infty}_x} \|\Delta u\|_{L^{\nu,2}_x} \\
	&\lesssim \|u\|^\alpha_{L^{r,2}_x} \|\Delta u\|_{L^{\nu,2}_x},
	\end{align*}
	which, by H\"older's inequality in time, yields
	\[
	(\text{A}) \lesssim \|u\|^\alpha_{L^k_tL^{r,2}_x} \|\Delta u\|_{L^\beta_t L^{\nu,2}_x}.
	\]
	We next write 
	\[
	\nabla (|x|^{-b}) |u|^\alpha u \sim |x|^{-b} |x|^{-1}(|u|^\alpha u)
	\]
	and use Hardy's inequality in Lorentz spaces with $1<\delta<d$ to get
	\[
	\|\nabla (|x|^{-b}) |u|^\alpha u\|_{L^{\frac{2d}{d+2},2}_x} \leq \||x|^{-b}\|_{L^{\frac{d}{b},\infty}_x} \||x|^{-1}(|u|^\alpha u)\|_{L^{\delta, 2}_x} \lesssim \|\nabla (|u|^\alpha u)\|_{L^{\delta,2}_x}.
	\]
	Repeating the same estimates as above, we prove as well that
	\[
	(\text{B}) \lesssim \|u\|^\alpha_{L^k_tL^{r,2}_x} \|\Delta u\|_{L^\beta_t L^{\nu,2}_x}.
	\]
	This shows \eqref{non-est-3} in this case.
	
	{\bf Case 2.} Let $d=3$, $0<b\leq 1$, and $\alpha>\frac{8-2b}{3}$. We take $(p,n) = \(\frac{7-b}{3}, \frac{14-2b}{3-b}\)\in S$, hence $2-\frac{2}{p}= \frac{4-b}{7-b}$. We define 
	\begin{align*}
	\delta:&=\frac{6(7-b)}{33-17b+2b^2}, & \sigma:&=\frac{6(7-b)(\alpha+2)}{66-34b+4b^2-(9-3b)\alpha}, \\
	\beta:&=\frac{4(7-b)(\alpha+2)}{(9-3b)\alpha+14b-2b^2-24}, & \nu:&=\frac{6(7-b)(\alpha+2)}{90-34b+4b^2+(3+3b)\alpha}, \\
	\theta:&=\frac{6(7-b)}{25-15b+2b^2}.& &
	\end{align*}
	We have the following properties: $(\beta,\nu)\in B$, $1<\delta<\frac{3}{2-\frac{2}{p}}$, and
	\[
	\frac{1}{n'}=\frac{b}{3}+\frac{1}{\delta}=\frac{b+2-\frac{2}{p}}{3}+\frac{1}{\theta}, \quad \frac{1}{\delta}=\frac{\alpha}{r}+\frac{1}{\sigma}, \quad \frac{1}{\sigma}=\frac{1}{\nu}-\frac{2}{3p}, \quad \frac{1}{p'}=\frac{\alpha}{k}+\frac{1}{\beta}.
	\]
	By the product rule, Sobolev embedding, and the chain rule in Lorentz spaces, we estimate 
	\begin{align*}
	\||\nabla|^{2-\frac{2}{p}} [|x|^{-b}|u|^\alpha u]\|_{L^{n',2}_x} &\lesssim \||x|^{-b}\|_{L^{\frac{3}{b},\infty}_x} \||\nabla|^{2-\frac{2}{p}} (|u|^\alpha u)\|_{L^{\delta,2}_x} \\ 
	&\quad + \||\nabla|^{2-\frac{2}{p}} (|x|^{-b})\|_{L^{\frac{3}{b+2-\frac{2}{p}},\infty}_x} \||u|^\alpha u\|_{L^{\theta,2}_x} \\
	&\lesssim \||\nabla|^{2-\frac{2}{p}} (|u|^\alpha u)\|_{L^{\delta,2}_x} \\
	&\lesssim \||u|^\alpha\|_{L^{\frac{r}{\alpha},\infty}_x} \||\nabla|^{2-\frac{2}{p}} u\|_{L^{\sigma,2}_x} \\
	&\lesssim \|u\|^\alpha_{L^{r,\infty}_x} \|\Delta u\|_{L^{\nu,2}_x} \\
	&\lesssim \|u\|^\alpha_{L^{r,2}_x} \|\Delta u\|_{L^{\nu,2}_x},
	\end{align*}
	H\"older's inequality in time yields
	\[
	\||\nabla|^{2-\frac{2}{p}} [|x|^{-b}|u|^\alpha u]\|_{L^{p'}_tL^{n',2}_x} \lesssim \|u\|^\alpha_{L^k_t L^{r,2}_x} \|\Delta u\|_{L^\beta_t L^{\nu,2}_x}.
	\]
	This is the desired estimate.
\end{proof}


\begin{lemma}[\bf Nonlinear estimates 3] \label{lem-non-est-3} ~\\
	Let $d\geq 5$, $0<b<\min\left\{\frac{d}{2},4\right\}$, $\frac{8-2b}{d}<\alpha<\frac{8-2b}{d-4}$, and $\frac{2d}{d+8}\leq n \leq \frac{2d}{d+4}$. Then we have
	\begin{align} \label{non-est-4}
	\||x|^{-b} |u|^\alpha u\|_{L^{n,2}_x} \lesssim \|u\|^{\alpha+1}_{H^2_x}.
	\end{align}
	In addition, we have for $d=4$, $0<b<\frac{d}{2}$, and $\alpha>\frac{8-2b}{d}$, 
	\begin{align} \label{non-est-5}
	\||x|^{-b} |u|^\alpha u\|_{L^{\frac{1}{1-\theta},2}_x} \lesssim \|u\|^{\frac{\alpha}{2}}_{H^2_x} \(\int_{\R^d} |x|^{-b} |u(x)|^{\alpha+2} dx\)^{\frac{1}{2}}
	\end{align}
	provided that $\theta>0$ is taken sufficiently small. Furthermore, we have for $d=3$, $0<b<\frac{d}{2}$, and $\alpha>\frac{8-2b}{d}$,
	\begin{align} \label{non-est-6}
	\||x|^{-b} |u|^\alpha u\|_{L^{\frac{1}{1-\theta},2}_x} \lesssim \|u\|^{\frac{3\alpha-2}{8}}_{H^2_x} \(\int_{\R^d} |x|^{-b} |u(x)|^{\alpha+2} dx\)^{\frac{5}{8}}
	\end{align}
	provided that $\theta>0$ is taken sufficiently small.
\end{lemma}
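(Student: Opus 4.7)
The plan is to establish each of the three estimates via Hölder's inequality in Lorentz spaces combined with Sobolev embedding, with the crucial step being a careful choice of how to split the nonlinear term so that the $L^{\alpha+2}(|x|^{-b}dx)$-factor appearing on the right-hand side can be extracted cleanly.

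For \eqref{non-est-4}, I would use that $|x|^{-b} \in L^{d/b,\infty}(\R^d)$ together with Hölder's inequality in Lorentz spaces to write
\[
\||x|^{-b} |u|^\alpha u\|_{L^{n,2}_x} \lesssim \||x|^{-b}\|_{L^{d/b,\infty}_x}\, \||u|^{\alpha+1}\|_{L^{\bar{n},2}_x}
= C\,\|u\|^{\alpha+1}_{L^{(\alpha+1)\bar{n},\,2(\alpha+1)}_x},
\]
where $\frac{1}{\bar{n}} = \frac{1}{n} - \frac{b}{d}$. A direct computation using the hypotheses $n \in [\frac{2d}{d+8}, \frac{2d}{d+4}]$ and $\frac{8-2b}{d} < \alpha < \frac{8-2b}{d-4}$ shows that $(\alpha+1)\bar{n}$ lies in the range $[2, \frac{2d}{d-4}]$, so the Sobolev embedding $H^2(\R^d) \hookrightarrow L^{(\alpha+1)\bar{n}, 2(\alpha+1)}(\R^d)$ (via Corollary \ref{coro-sobo-embe} combined with the monotonicity property in the second Lorentz index) yields the claim.

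For \eqref{non-est-5} and \eqref{non-est-6}, the key is to write the nonlinearity as a product designed to absorb exactly the correct power of $\int |x|^{-b}|u|^{\alpha+2}\,dx$. For \eqref{non-est-5}, I would decompose
\[
|x|^{-b} |u|^\alpha u = \bigl(|x|^{-b}|u|^{\alpha+2}\bigr)^{1/2} \cdot |x|^{-b/2} |u|^{\alpha/2},
\]
so that the first factor belongs to $L^{2}_x$ with norm exactly $\bigl(\int |x|^{-b}|u|^{\alpha+2}\bigr)^{1/2}$. Hölder's inequality in Lorentz spaces (with indices $(2,2)$ and $(r,\infty)$ for an appropriate $r$ satisfying $\frac{1}{r} = \frac{1}{2} - \theta$) together with $|x|^{-b/2} \in L^{2d/b,\infty}$ reduces matters to controlling $\|u\|^{\alpha/2}_{L^{p,\infty}_x}$ for some finite $p$. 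Since $d=4$ is the critical Sobolev exponent for $H^2$, the embedding $H^2(\R^4) \hookrightarrow L^{p}(\R^4)$ holds for all $p \in [2,\infty)$; the role of $\theta>0$ is precisely to keep $p$ finite, and the constraint $\alpha > \frac{8-2b}{4}$ ensures $p \geq 2$.

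For \eqref{non-est-6}, the same idea applies with the decomposition
\[
|x|^{-b}|u|^\alpha u = \bigl(|x|^{-b}|u|^{\alpha+2}\bigr)^{5/8} \cdot |x|^{-3b/8} |u|^{(3\alpha-2)/8},
\]
where the exponent $5/8$ is chosen so the first factor sits in $L^{8/5}_x$ with norm $\bigl(\int |x|^{-b}|u|^{\alpha+2}\bigr)^{5/8}$. Hölder's inequality then demands control of $|x|^{-3b/8}|u|^{(3\alpha-2)/8}$ in $L^{r,\infty}_x$ for $r$ satisfying $\frac{1}{r} = \frac{3}{8} - \theta$, which via another Hölder reduces to estimating $\|u\|^{(3\alpha-2)/8}_{L^{q,\infty}_x}$ for some $q$; in dimension three, $H^2(\R^3) \hookrightarrow L^\infty \cap L^2 \hookrightarrow L^{q,\infty}$ for every $q \in [2,\infty]$, and the hypothesis $\alpha > \frac{8-2b}{3}$ guarantees $q \geq 2$ for $\theta$ small. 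The main obstacle throughout is bookkeeping: one must simultaneously satisfy the scaling identity among the Lorentz indices, the constraint $\frac{1}{q} \leq \frac{1}{q_1} + \frac{1}{q_2}$ on second indices in Hölder, the condition that $p$ lies in the admissible range for the Sobolev embedding, and (in \eqref{non-est-5}--\eqref{non-est-6}) that the finiteness remains valid for all sufficiently small $\theta>0$; the specific powers $1/2$, $5/8$, $\alpha/2$, and $(3\alpha-2)/8$ are dictated exactly by this balancing.
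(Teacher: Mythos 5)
Your proposal is correct and follows essentially the same route as the paper. For \eqref{non-est-4} you use the identical Hölder split (your $\bar n$ is the paper's $\delta$) with the same Sobolev embedding and monotonicity in the second Lorentz index, and for \eqref{non-est-5}--\eqref{non-est-6} your decompositions
\[
|x|^{-b}|u|^{\alpha}u = \bigl(|x|^{-b}|u|^{\alpha+2}\bigr)^{1/2}\cdot |x|^{-b/2}|u|^{\alpha/2},
\qquad
|x|^{-b}|u|^{\alpha}u = \bigl(|x|^{-b}|u|^{\alpha+2}\bigr)^{5/8}\cdot |x|^{-3b/8}|u|^{(3\alpha-2)/8}
\]
are exactly the paper's three factors (the paper applies a single three-way Hölder, you nest two two-way Hölders, which is the same thing), and the end-game — the limiting exponent condition as $\theta\to 0$ reducing to $\alpha>\frac{8-2b}{d}$ — matches as well.
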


\begin{proof}
	Let $d\geq 5$. We estimate
	\begin{align*}
	\||x|^{-b}|u|^\alpha u\|_{L^{n,2}_x} &\leq \||x|^{-b}\|_{L^{\frac{d}{b},\infty}_x} \||u|^\alpha u\|_{L^{\delta, 2}_x} \\
	&\lesssim \|u\|^{\alpha+1}_{L^{\delta(\alpha+1), 2(\alpha+1)}_x} \\
	&\lesssim \|\scal{\Delta} u\|^{\alpha+1}_{L^{2,2(\alpha+1)}_x} \\
	&\lesssim \|u\|^{\alpha+1}_{H^2_x}
	\end{align*}
	provided that $\frac{1}{n}=\frac{b}{d}+\frac{1}{\delta}$ and 
	\begin{align}\label{cond-delta}
	2\leq \delta(\alpha+1) \leq \frac{2d}{d-4}.
	\end{align}
	Here we have used the Sobolev embedding $W^2L^{2,\rho}(\R^d)\subset L^{n, \rho}(\R^d)$ for $2\leq n\leq \frac{2d}{d-4}$, and $1\leq \rho \leq \infty$. Condition \eqref{cond-delta} is equivalent to 
	\[
	2\leq \frac{dn(\alpha+1)}{d-bn} \leq \frac{2d}{d-4}.
	\]
	As $\frac{8-2b}{d}<\alpha<\frac{8-2b}{d-4}$, and $\frac{2d}{d+8}\leq n \leq \frac{2d}{d+4}$, the above condition is satisfied and \eqref{non-est-4} follows.
	
	We now consider the case $d=4$. We estimate
	\begin{align*}
	\||x|^{-b} |u|^\alpha u\|_{L^{\frac{1}{1-\theta},2}_x} &\leq \||x|^{-\frac{b}{2}}\|_{L^{\frac{2d}{b},\infty}_x} \||u|^{\frac{\alpha}{2}}\|_{L^{\delta, \infty}_x} \||x|^{-\frac{b}{2}} |u|^{\frac{\alpha+2}{2}} \|_{L^{2,2}_x} \\
	&\lesssim \|u\|^{\frac{\alpha}{2}}_{L^{\frac{\delta\alpha}{2},\infty}_x} \(\int_{\R^d} |x|^{-b} |u(x)|^{\alpha+2} dx\)^{\frac{1}{2}} \\
	&\lesssim \|u\|^{\frac{\alpha}{2}}_{H^2_x} \(\int_{\R^d} |x|^{-b} |u(x)|^{\alpha+2} dx\)^{\frac{1}{2}}
	\end{align*}
	provided that 
	\[
	1-\theta=\frac{b}{2d} + \frac{1}{\delta} +\frac{1}{2}, \quad \frac{\delta \alpha}{2}>2.
	\]
	Here we have used the embedding $H^2(\R^4) \subset L^{\frac{\delta \alpha}{2}}(\R^4) \subset L^{\frac{\delta\alpha}{2},\infty}(\R^4)$ for $2<\frac{\delta\alpha}{2}<\infty$.	In particular, we need
	\[
	\frac{d\alpha}{d-b-2d\theta} >2 \text{ or } d\alpha-2d+2b +4d\theta>0
	\]
	which is satisfied due to $d= 4$, $\alpha>\frac{8-2b}{d}$, and $\theta>0$ sufficiently small. 
	
	Finally, for $d=3$, we have
	\begin{align*}
	\||x|^{-b} |u|^\alpha u\|_{L^{\frac{1}{1-\theta},2}_x} &\leq \||x|^{-\frac{3b}{8}}\|_{L^{\frac{8d}{3b},\infty}_x} \||u|^{\frac{3\alpha-2}{8}}\|_{L^{\delta, \infty}_x} \||x|^{-\frac{5b}{8}} |u|^{\frac{5(\alpha+2)}{8}} \|_{L^{\frac{8}{5},2}_x} \\
	&\lesssim \|u\|^{\frac{3\alpha-2}{8}}_{L^{\frac{\delta(3\alpha-2)}{8},\infty}_x} \(\int_{\R^d} |x|^{-b} |u(x)|^{\alpha+2} dx\)^{\frac{5}{8}} \\
	&\lesssim \|u\|^{\frac{3\alpha-2}{8}}_{H^2_x} \(\int_{\R^d} |x|^{-b} |u(x)|^{\alpha+2} dx\)^{\frac{5}{8}}
	\end{align*}
	provided that 
	\[
	1-\theta=\frac{3b}{8d} + \frac{1}{\delta} +\frac{5}{8}, \quad \frac{\delta (3\alpha-2)}{8}>2.
	\]
	Here we have used the embedding $H^2(\R^3) \subset L^{\frac{\delta (3\alpha-2)}{8}}(\R^3) \subset L^{\frac{\delta(3\alpha-2)}{8},\infty}(\R^3)$ for $2<\frac{\delta(3\alpha-2)}{8}<\infty$ and $L^{\frac{8}{5}}(\R^3)\subset L^{\frac{8}{5},2}(\R^3)$. In particular, we need
	\[
	\frac{d(3\alpha-2)}{3d-3b-8d\theta} >2 \text{ or } 3d\alpha-8d+6b +16d\theta>0
	\]
	which is satisfied as $d= 3$, $\alpha>\frac{8-2b}{d}$, and $\theta>0$ sufficiently small.
\end{proof}

A direct consequence of the above nonlinear estimates is the following scattering condition. 

\begin{lemma}[\bf Scattering condition] \label{lem-scat-cond} ~\\
	Let $d\geq 3$, $\mu\geq 0$, $0<b<\min\left\{\frac{d}{2},4\right\}$, $\alpha>\frac{8-2b}{d}$, and $\alpha<\frac{8-2b}{d-4}$ if $d\geq 5$. Suppose that $u$ is a global-in-time solution to \eqref{IBNLS} satisfying
	\[
	\sup_{t\in \R} \|u(t)\|_{H^2_x} \leq E
	\]
	for some constant $E>0$. If 
	\begin{align}\label{boun-LkLr2}
	\|u\|_{L^k_t(\R, L^{r,2}_x)} <\infty,
	\end{align}
	where $k,r$ is as in \eqref{qrkm}, then the solution scatters in $H^2(\R^d)$ in both directions.
\end{lemma}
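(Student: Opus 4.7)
The plan is to deploy a standard scattering-criterion scheme in two stages. First, upgrade the assumed finite $L^k_t(\R,L^{r,2}_x)$ bound on $u$ to a global $H^2$-level Strichartz bound; then show that $v(t):=U_\mu(-t)u(t)$ is Cauchy in $H^2$ as $t\to\pm\infty$ by combining dual Strichartz estimates with the nonlinear estimates of Section \ref{S3}. Denoting the limits by $u_\pm$ and using the $L^2$-isometry of $U_\mu$ together with $\Delta U_\mu=U_\mu\Delta$, this yields $\|u(t)-U_\mu(t)u_\pm\|_{H^2}=\|v(t)-u_\pm\|_{H^2}\to 0$. I describe only forward scattering; the backward direction is identical.

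\textbf{Step 1 (global Strichartz bounds).} Let $(\beta,\nu)\in B$ and $(p,n)\in S$ be the pairs provided by Lemma \ref{lem-non-est-2}. Since $\|u\|_{L^k(\R,L^{r,2})}<\infty$, I partition $\R$ into finitely many consecutive intervals $I_j=[t_j,t_{j+1}]$, $j=1,\dots,J$, with $\|u\|_{L^k(I_j,L^{r,2})}\leq\eta$ for a small absolute constant $\eta>0$. On each $I_j$, the Duhamel formula centered at $t_j$, the Strichartz estimates \eqref{str-est-homo}--\eqref{str-est-inhomo}, the gain-of-derivatives Strichartz \eqref{str-est-gain} applied to $\Delta u$, together with the nonlinear estimates \eqref{non-est-2} and \eqref{non-est-3}, give
\[
\|u\|_{L^q(I_j,L^{r,2})}+\|\Delta u\|_{L^\beta(I_j,L^{\nu,2})}\leq C\|u(t_j)\|_{H^2}+C\eta^\alpha\left(\|u\|_{L^q(I_j,L^{r,2})}+\|\Delta u\|_{L^\beta(I_j,L^{\nu,2})}\right).
\]
Using the hypothesis $\|u(t_j)\|_{H^2}\leq E$ and choosing $\eta$ so that $C\eta^\alpha\leq\tfrac12$, absorption yields a uniform bound on each $I_j$; summing over the $J$ intervals produces the global bound $\|u\|_{L^q(\R,L^{r,2})}+\|\Delta u\|_{L^\beta(\R,L^{\nu,2})}\leq M<\infty$.

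\textbf{Step 2 (Cauchy property in $H^2$).} For $0<s<t$, Duhamel gives $v(t)-v(s)=-i\int_s^t U_\mu(-\tau)N(\tau)\,d\tau$ with $N:=|x|^{-b}|u|^\alpha u$. Dualizing \eqref{str-est-homo} and invoking \eqref{non-est-2} controls the $L^2$ component:
\[
\|v(t)-v(s)\|_{L^2}\lesssim \|N\|_{L^{q'}([s,t],L^{r',2})}\lesssim \|u\|_{L^k([s,t],L^{r,2})}^{\alpha}\|u\|_{L^q([s,t],L^{r,2})}.
\]
For the $\dot H^2$ component, I invoke Lemma \ref{lem-str-est-gain} at the endpoint output pair $(p_1,n_1)=(\infty,2)\in S$ with $(p_2,n_2)=(p,n)$, and use the $L^2$-isometry of $U_\mu$ to recast the tail integral as $\|\Delta \int_s^t U_\mu(-\tau)N\,d\tau\|_{L^2_x}\leq \|\Delta \int_s^{\cdot} U_\mu(\cdot-\tau)N\,d\tau\|_{L^\infty_\sigma([s,t],L^2_x)}$, which matches the left-hand side of \eqref{str-est-schr}. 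Combined with \eqref{non-est-3},
\[
\|\Delta(v(t)-v(s))\|_{L^2}\lesssim \||\nabla|^{2-2/p}N\|_{L^{p'}([s,t],L^{n',2})}\lesssim \|u\|_{L^k([s,t],L^{r,2})}^{\alpha}\|\Delta u\|_{L^\beta([s,t],L^{\nu,2})}.
\]
The finiteness of the three global norms from Step 1, together with absolute continuity of integrals, forces these right-hand sides to vanish as $s,t\to\infty$. Thus $v(t)$ is Cauchy in $H^2$, with limit $u_+\in H^2$, and this produces the scattering state.

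\textbf{Anticipated obstacle.} The only delicate point is the application of Lemma \ref{lem-str-est-gain} at the output pair $(p_1,n_1)=(\infty,2)\in S$ in the $\dot H^2$ estimate above. This pair is Schr\"odinger admissible in every dimension (the admissibility identity $2/\infty+d/2=d/2$ is trivial) and corresponds to the $L^2$-conservation of the dyadic pieces $\Vc_{\mu,j}$ used in the proof of \eqref{str-est-schr}, so the estimate applies without any endpoint obstruction. Everything else --- the partition, the absorption, the dual Strichartz estimate, and the summation over the $J$ intervals --- is routine once Lemmas \ref{lem-non-est-1} and \ref{lem-non-est-2} are in hand. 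The dimensional restriction $d\geq 3$ in the statement enters exactly through the availability of Lemma \ref{lem-non-est-2}.
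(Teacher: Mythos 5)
Your argument is correct and follows the paper's proof essentially verbatim: both partition $\R$ into finitely many subintervals on which the $L^k_tL^{r,2}_x$ norm is small, run the Duhamel bootstrap with \eqref{str-est-inhomo}, \eqref{str-est-gain}, \eqref{non-est-2}, \eqref{non-est-3} to obtain global $L^q_tL^{r,2}_x$ and $L^\beta_tL^{\nu,2}_x$ bounds on $\scal{\Delta}u$, and then show $U_\mu(-t)u(t)$ is Cauchy in $H^2$ by the dual (endpoint) Strichartz estimate together with the same nonlinear estimates. Your spelling out of the endpoint pair $(p_1,n_1)=(\infty,2)\in S$ in Lemma \ref{lem-str-est-gain} and the use of the $L^2$-isometry to pass from $U_\mu(-\tau)$ to $U_\mu(\sigma-\tau)$ is precisely what the paper leaves implicit in its final display, and it is valid without any endpoint obstruction.
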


\begin{proof}
	Let $\vareps>0$ to be chosen shortly. Thanks to \eqref{boun-LkLr2}, there exist an integer $J=J(\vareps)>0$ and disjoint intervals $I_j, j=1, \cdots, J$ such that
	\begin{align} \label{vareps-Ij}
	\|u\|_{L^k_t(I_j, L^{r,2}_x)} <\vareps,\quad j=1,\cdots, J.
	\end{align}
	For $t\in I_j=[t_j, t_{j+1}]$, we have the Duhamel formula
	\[
	u(t) = U_\mu(t-t_j) u(t_j) + i \int_{t_j}^t U_\mu(t-\tau) (|x|^{-b} |u|^\alpha u)(\tau) d\tau.
	\]
	By Strichartz estimates \eqref{str-est-inhomo}, \eqref{str-est-gain} and nonlinear estimates \eqref{non-est-2}, \eqref{non-est-3}, we have
	\begin{align*}
	\|\scal{\Delta} u\|_{L^q_t(I_j,L^{r,2}_x)} &\leq \|\scal{\Delta} U_\mu(t-t_j)u(t_j)\|_{L^q_t(I_j,L^{r,2}_x)} \\
	&\quad + \left\|\int_{t_j}^t U_\mu(t-\tau)  (|x|^{-b} |u|^\alpha u) (\tau) d\tau\right\|_{L^q_t(I_j,L^{r,2}_x)} \\
	&\quad + \left\|\int_{t_j}^t U_\mu(t-\tau)  \Delta [|x|^{-b} |u|^\alpha u] (\tau) d\tau\right\|_{L^q_t(I_j,L^{r,2}_x)} \\
	&\lesssim \|u(t_j)\|_{H^2_x} + \||x|^{-b} |u|^\alpha u\|_{L^{q'}_t(I_j,L^{r',2}_x)} + \||\nabla|^{2-\frac{2}{p}}[|x|^{-b} |u|^\alpha u]\|_{L^{p'}_t (I_j,L^{n',2}_x)}\\
	&\lesssim \|u(t_j)\|_{H^2_x} + \|u\|^\alpha_{L^k_t(I_j,L^{r,2}_x)} \|u\|_{L^q_t(I_j,L^{r,2}_x)} + \|u\|^\alpha_{L^k_t(I_j,L^{r,2}_x)} \|\Delta u\|_{L^\beta_t (I_j,L^{\nu,2}_x)}\\
	&\lesssim E + \vareps^\alpha \(\|\scal{\Delta} u\|_{L^q_t(I_j,L^{r,2}_x)} + \|\scal{\Delta} u\|_{L^\beta_t (I_j,L^{\nu,2}_x)}\),
	\end{align*}
	where $p,n, \beta, \nu$ are as in Lemma \ref{lem-non-est-2}. A similar estimate goes for $\|\scal{\Delta} u\|_{L^\beta_t(I_j,L^{\nu,2}_x)}$ and we get
	\[
	\|\scal{\Delta} u\|_{L^q_t(I_j,L^{r,2}_x)}+\|\scal{\Delta} u\|_{L^\beta_t(I_j,L^{\nu,2}_x)} \lesssim E + \vareps^\alpha \(\|\scal{\Delta} u\|_{L^q_t(I_j,L^{r,2}_x)} + \|\scal{\Delta} u\|_{L^\beta_t (I_j,L^{\nu,2}_x)}\).
	\]
	Taking $\vareps>0$ small, there exists $C>0$ such that
	\begin{align*}
	\|\scal{\Delta} u\|_{L^q_t(I_j,L^{r,2}_x)} + \|\scal{\Delta} u\|_{L^\beta_t(I_j,L^{\nu,2}_x)} \leq CE, \quad \forall j=1,\cdots, J.
	\end{align*}
	Summing over all sub-intervals $I_j$, we obtain
	\begin{align}  \label{glob-norm}
	\|\scal{\Delta} u\|_{L^q_t(\R,L^{r,2}_x)} + \|\scal{\Delta} u\|_{L^\beta_t(\R,L^{\nu,2}_x)} <\infty.
	\end{align}
	
	We now prove the scattering for positive times (the one for negative times is treated similarly). Let $t_2>t_1>0$. We have
	\begin{align*}
	\|U_\mu(-t_2) u(t_2)&-U_\mu(-t_1) u(t_1)\|_{H^2_x} \\
	&\leq \left\|\int_{t_1}^{t_2} U_\mu(-\tau) (|x|^{-b}|u|^\alpha u)(\tau) d\tau\right\|_{L^2_x} \\
	&\quad + \left\|\int_{t_1}^{t_2} U_\mu(-\tau) \Delta [|x|^{-b}|u|^\alpha u](\tau) d\tau\right\|_{L^2_x} \\
	&\lesssim \|u\|^\alpha_{L^k_t((t_1,t_2),L^{r,2}_x)} \left(\|\scal{\Delta} u\|_{L^q_t((t_1,t_2),L^{r,2}_x)}+ \|\scal{\Delta} u\|_{L^\beta_t((t_1,t_2),L^{\nu,2}_x)}\)
	\end{align*}
	Thanks to \eqref{boun-LkLr2} and \eqref{glob-norm}, we see that
	\[
	\|U_\mu(-t_2) u(t_2)-U_\mu(-t_1) u(t_1)\|_{H^2_x}  \to 0 \text{ as } t_1, t_2 \to +\infty.
	\]
	This shows that $(U_\mu(-t) u(t))_{t\to +\infty}$ is a Cauchy sequence in $H^2(\R^d)$. Thus the limit 
	\[
	u_+ := u_0 + i \int_t^{+\infty} U_\mu(-\tau) (|x|^{-b} |u|^\alpha u)(\tau) d\tau
	\]
	exists in $H^2(\R^d)$. Estimating as above, we prove that
	\[
	\|u(t)-U_\mu(t) u_+\|_{H^2_x} \to 0 \text{ as } t\to +\infty.
	\]
	The proof is complete.
\end{proof}

\section{Variational analysis}
\label{S4}
\setcounter{equation}{0}
In this section, we study the minimization problem \eqref{m-mu-omega} and we have the following existence of minimizers for $m_{\mu,\omega}$.

\begin{proposition}[\bf Existence of minimizers] \label{prop-mini-prob} ~\\
	Let $d\geq 1$, $\mu\geq 0$, $0<b<\min\{d,4\}$, $\alpha>\frac{8-2b}{d}$, $\alpha<\frac{8-2b}{d-4}$ if $d\geq 5$, and $\omega>0$. Then $m_{\mu,\omega}>0$ and there exists at least a minimizer for $m_{\mu,\omega}$ which solves the elliptic equation \eqref{elli-equa}. In addition,
	\begin{align} \label{chara-m-omega}
	m_{\mu,\omega}= \inf \left\{S_{\mu,\omega}(f) : f \in H^2(\R^d) \backslash \{0\}, f \text{ solves } \eqref{elli-equa}\right\}.
	\end{align}
\end{proposition}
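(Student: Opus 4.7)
The plan is to apply the direct method on the Pohozaev manifold using the compact embedding $H^2(\R^d)\hookrightarrow L^{\alpha+2}(|x|^{-b}dx)$, then to recover the Euler--Lagrange equation via a mass-preserving Pohozaev-type rescaling.

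The key preparatory identity, valid for every $f\in H^2(\R^d)$, is
$$S_{\mu,\omega}(f) - \frac{2}{d\alpha+2b}G_\mu(f) = \frac{d\alpha+2b-8}{2(d\alpha+2b)}\|\Delta f\|^2_{L^2} + \frac{\mu(d\alpha+2b-4)}{2(d\alpha+2b)}\|\nabla f\|^2_{L^2} + \frac{\omega}{2}\|f\|^2_{L^2} =: H(f),$$
all three coefficients being strictly positive since $\alpha>(8-2b)/d$ forces $d\alpha+2b>8$. Hence $H$ is coercive on $H^2(\R^d)$ and $S_{\mu,\omega}\equiv H$ on the constraint set $\{G_\mu=0\}$. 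Combined with $G_\mu(f)=0$, a scaling-invariant Caffarelli--Kohn--Nirenberg inequality
$$\int_{\R^d}|x|^{-b}|f|^{\alpha+2}dx \lesssim \|\Delta f\|^{(d\alpha+2b)/4}_{L^2}\|f\|^{\alpha+2-(d\alpha+2b)/4}_{L^2}$$
(whose exponents $a_1=(d\alpha+2b-8)/4$ and $a_2=((4-d)\alpha+8-2b)/4$ in the rearranged lower bound are both strictly positive precisely in the mass-supercritical and energy-subcritical range considered) produces a uniform monomial lower bound $\|\Delta f\|^{a_1}_{L^2}\|f\|^{a_2}_{L^2}\geq c>0$, which yields $\|f\|_{H^2}\geq c'>0$ on the constraint and hence $m_{\mu,\omega}>0$.

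Next I pick a minimizing sequence $(f_n)\subset H^2\setminus\{0\}$ with $G_\mu(f_n)=0$ and $H(f_n)=S_{\mu,\omega}(f_n)\to m_{\mu,\omega}$. Coercivity of $H$ gives boundedness in $H^2$; along a subsequence $f_n\rightharpoonup f$ weakly in $H^2$. The compact embedding upgrades this to strong convergence of the nonlinear term, while weak lower semicontinuity of the three quadratic norms gives $G_\mu(f)\leq 0$ and $H(f)\leq m_{\mu,\omega}$. If $f\equiv 0$, strong convergence combined with $G_\mu(f_n)=0$ forces $\|\Delta f_n\|_{L^2}\to 0$, which together with the boundedness of $\|f_n\|_{L^2}$ contradicts the uniform lower bound above. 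If $G_\mu(f)<0$, the scalar scaling $\lambda\mapsto\lambda f$ produces $\lambda^*\in(0,1)$ with $G_\mu(\lambda^*f)=0$, and then $H(\lambda^*f)=(\lambda^*)^2 H(f)<H(f)\leq m_{\mu,\omega}$ contradicts the definition of $m_{\mu,\omega}$. Hence $G_\mu(f)=0$, $f\not\equiv 0$, and $f$ is a minimizer.

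To identify $f$ as a solution of \eqref{elli-equa} without working out a nonvanishing Lagrange multiplier, I would use the $L^2$-preserving rescaling $f_\lambda(x):=\lambda^{d/2}f(\lambda x)$, under which a direct computation yields
$$\frac{d}{d\lambda}\bigg|_{\lambda=1}S_{\mu,\omega}(f_\lambda)=G_\mu(f), \qquad \frac{d}{d\lambda}\bigg|_{\lambda=1}G_\mu(f_\lambda)=-(d\alpha+2b-8)\|\Delta f\|^2_{L^2}-\frac{\mu(d\alpha+2b-4)}{2}\|\nabla f\|^2_{L^2},$$
the second being strictly negative at the (nontrivial) minimizer. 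By the implicit function theorem, for every $\phi\in H^2(\R^d)$ there exists $\Lambda(s)$ near $\Lambda(0)=1$ such that $g_s:=(f+s\phi)_{\Lambda(s)}$ satisfies $G_\mu(g_s)=0$ for small $s$. Minimality then forces
$$0=\frac{d}{ds}\bigg|_{s=0}S_{\mu,\omega}(g_s)=\langle S'_{\mu,\omega}(f),\phi\rangle + \Lambda'(0)\,G_\mu(f) = \langle S'_{\mu,\omega}(f),\phi\rangle$$
for every $\phi\in H^2(\R^d)$, so $S'_{\mu,\omega}(f)=0$, which is exactly \eqref{elli-equa}. For \eqref{chara-m-omega}, any nontrivial solution $Q$ of \eqref{elli-equa} satisfies $G_\mu(Q)=0$ by the standard biharmonic Pohozaev identity (obtained from pairing the equation with $x\cdot\nabla\bar Q$ and taking real parts), so it lies in the constraint set, giving the two inclusions. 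The main technical obstacle in this plan is the careful verification of the two scaling derivatives above and the implicit function step; the choice of the mass-preserving rescaling, rather than a purely scalar one, is what makes $G_\mu$ arise naturally as the Pohozaev-type constraint and ultimately allows us to bypass a direct identification of the Lagrange multiplier.
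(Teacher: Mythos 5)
Your argument is correct, and for the positivity of $m_{\mu,\omega}$ and the existence of a minimizer it largely parallels the paper's proof, with one small variant: to bring a weak limit $f$ with $G_\mu(f)<0$ onto the constraint you use the scalar rescaling $\lambda\mapsto\lambda f$, exploiting $H(\lambda f)=\lambda^2 H(f)$, whereas the paper uses the $L^2$-invariant rescaling $f_\lambda(x)=\lambda^{d/2}f(\lambda x)$ and the unique sign-change of $\lambda\mapsto G_\mu(f_\lambda)$; both work. The genuinely different step is your derivation of the Euler--Lagrange equation: you bypass the Lagrange multiplier entirely by constructing a curve $g_s=(f+s\phi)_{\Lambda(s)}$ inside the constraint set via the implicit function theorem, relying on the transversality $\partial_\lambda|_{\lambda=1}G_\mu(f_\lambda)<0$ at the nontrivial minimizer, whereas the paper introduces a multiplier $\nu$ and then proves $\nu=0$ by pairing the resulting PDE with $\bar f$ and with $x\cdot\nabla\bar f$ and combining the two Pohozaev-type identities. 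Your route is arguably cleaner since it avoids computing the multiplier explicitly, at the modest cost of checking the nondegeneracy of $G_\mu$ along the scaling. One small imprecision to flag: the value you display for $\partial_\lambda|_{\lambda=1}G_\mu(f_\lambda)$ is not an unconditional identity; the direct computation gives $8\|\Delta f\|^2_{L^2}+2\mu\|\nabla f\|^2_{L^2}-\tfrac{(d\alpha+2b)^2}{4(\alpha+2)}\int|x|^{-b}|f|^{\alpha+2}\,dx$, and your expression is obtained only after eliminating the potential term using $G_\mu(f)=0$. Since you apply it at the minimizer this is harmless, but you should say explicitly that the constraint has been substituted, so the reader sees where the simplification comes from and why the resulting sign is strict.
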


Before giving the proof of Proposition \ref{prop-mini-prob}, we have the following observation.

\begin{observation} \label{obse-1}
	Let $f\in H^2(\R^d)\backslash \{0\}$. Then there exists a unique $\lambda_0>0$ such that
	\[
	G_\mu(f_\lambda) \left\{
	\begin{array}{cl}
	>0 &\text{if } 0<\lambda<\lambda_0, \\
	=0 &\text{if } \lambda=\lambda_0, \\
	<0 &\text{if } \lambda>\lambda_0,
	\end{array}
	\right.
	\]
	where 
	\[
	f_\lambda(x):= \lambda^{\frac{d}{2}} f(\lambda x), \quad \lambda>0
	\]
	is the mass-critical scaling.
\end{observation}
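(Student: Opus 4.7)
The plan is to compute $G_\mu(f_\lambda)$ as an explicit function of $\lambda$ and then reduce the claim to a one-variable sign analysis. Using the mass-critical scaling identities $\|f_\lambda\|_{L^2}=\|f\|_{L^2}$, $\|\nabla f_\lambda\|_{L^2}^2=\lambda^2\|\nabla f\|_{L^2}^2$, $\|\Delta f_\lambda\|_{L^2}^2=\lambda^4\|\Delta f\|_{L^2}^2$, together with the change of variables $y=\lambda x$ in the weighted nonlinear term (which contributes a Jacobian $\lambda^{-d}$ and a factor $\lambda^b$ from $|x|^{-b}$), I would first verify that
\[
G_\mu(f_\lambda)=A\lambda^4+B\lambda^2-C\lambda^{p},
\]
with $A=2\|\Delta f\|_{L^2}^2$, $B=\mu\|\nabla f\|_{L^2}^2$, $C=\frac{d\alpha+2b}{2(\alpha+2)}\int|x|^{-b}|f(x)|^{\alpha+2}\,dx$, and $p=\frac{d\alpha}{2}+b$.

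Two observations then carry the whole argument. First, the standing mass-supercritical assumption $\alpha>\frac{8-2b}{d}$ is exactly what gives $p>4$, so the nonlinear term dominates for large $\lambda$. Second, for $f\in H^2(\R^d)\setminus\{0\}$ one has $A>0$ (any $L^2$-harmonic function must vanish, as seen on the Fourier side), $B\geq 0$, and $C>0$ (from the compact embedding $H^2(\R^d)\hookrightarrow L^{\alpha+2}(|x|^{-b}dx)$ recorded in this section).

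With these in hand, I would factor $G_\mu(f_\lambda)=\lambda^2\psi(\lambda)$, where $\psi(\lambda):=A\lambda^2+B-C\lambda^{p-2}$, and run a brief monotonicity analysis on $\psi$. Since $p-4>0$, the derivative
\[
\psi'(\lambda)=\lambda\bigl(2A-(p-2)C\lambda^{p-4}\bigr)
\]
has a unique positive zero $\lambda_*=\bigl(\tfrac{2A}{(p-2)C}\bigr)^{1/(p-4)}$, so $\psi$ strictly increases on $(0,\lambda_*)$ from $\psi(0^+)=B\geq0$, attains a strictly positive maximum $\psi(\lambda_*)=\tfrac{p-4}{p-2}A\lambda_*^2+B$, and then strictly decreases to $-\infty$. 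The intermediate value theorem combined with strict monotonicity on $(\lambda_*,\infty)$ produces a unique $\lambda_0\in(\lambda_*,\infty)$ with $\psi(\lambda_0)=0$, and $\psi>0$ on $(0,\lambda_0)$, $\psi<0$ on $(\lambda_0,\infty)$. Since $\lambda^2>0$, $G_\mu(f_\lambda)$ inherits the same sign pattern, which is the claim. The only subtlety worth flagging is that the intercept $B$ may vanish when $\mu=0$, but this cannot produce a spurious second zero of $\psi$: $A>0$ forces $\psi$ strictly up off the axis for small $\lambda>0$, and strict monotonicity takes care of the rest.
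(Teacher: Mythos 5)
Your proof is correct, and it is the same basic idea as the paper's: write out $G_\mu(f_\lambda)$ explicitly as a three-term power function of $\lambda$, factor out a positive power of $\lambda$, and do a one-variable sign analysis on the cofactor. The one place you diverge is in the choice of power to factor out. You pull out $\lambda^2$, leaving $\psi(\lambda)=A\lambda^2+B-C\lambda^{p-2}$, which is unimodal (increasing then decreasing), so you must locate the critical point $\lambda_*$, argue positivity on $(0,\lambda_*]$, and then use strict decrease on $(\lambda_*,\infty)$ to pin down the unique zero — an argument that works but requires the extra care you correctly flag when $B=0$. The paper instead pulls out $\lambda^4$, leaving $\phi(\lambda)=a+b\lambda^{-2}-c\lambda^{\delta}$ with $\delta=\tfrac{d\alpha+2b-8}{2}>0$; since every $\lambda$-dependent term there has a nonpositive derivative, $\phi$ is strictly decreasing from a value $\ge a>0$ to $-\infty$ and the unique zero drops out in one line, with no critical point to locate and no edge case at $\mu=0$. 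Both routes use only the mass-supercritical inequality $d\alpha+2b>8$ and positivity of $\|\Delta f\|_{L^2}^2$ and $\int|x|^{-b}|f|^{\alpha+2}\,dx$ for $f\ne 0$; the paper's factorization is just the choice that makes the cofactor monotone.
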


\begin{proof}[Proof of Observation \ref{obse-1}]
	We have
	\begin{align*}
	G_\mu(f_\lambda)&= 2\lambda^4 \|\Delta f\|^2_{L^2} + \mu \lambda^2 \|\nabla f\|^2_{L^2} -\frac{d\alpha+2b}{2(\alpha+2)} \lambda^{\frac{d\alpha+2b}{2}} \int |x|^{-b} |f(x)|^{\alpha+2} dx \\
	&= \lambda^4 \phi(\lambda)
	\end{align*}
	where
	\[
	\phi(\lambda) = a +b \lambda^{-2} - c\lambda^\delta
	\]
	with $a,c>0$, $b\geq 0$, and $\delta>0$. As $\phi(\lambda)$ is strictly decreasing on $(0,+\infty)$, $\lim_{\lambda \to 0} \phi(\lambda) \geq a>0$, and $\lim_{\lambda \to +\infty} \phi(\lambda) =-\infty$, there exists a unique $\lambda_0>0$ such that $\phi(\lambda_0)=0$, $\phi(\lambda)>0$ for $0<\lambda<\lambda_0$, and $\phi(\lambda)<0$ for $\lambda>\lambda_0$. This proves the observation. 
\end{proof}


\begin{proof}[Proof of Proposition \ref{prop-mini-prob}]
	Let $(f_n)_n$ be a minimizing sequence for $m_{\mu,\omega}$, i.e., $S_{\mu,\omega}(f_n) \to mu_{\mu,\omega}$ as $n\to \infty$ and $G_\mu(f_n)=0$ for all $n\geq 1$. 
	
	We first show that $m_{\mu,\omega}>0$. To see this, we observe that
	\begin{align}
	m_{\mu,\omega} \longleftarrow S_{\mu,\omega}(f_n) &=S_{\mu,\omega}(f_n) - \frac{2}{d\alpha+2b} G_\mu(f_n) \nonumber\\
	&=\frac{d\alpha+2b-8}{2(d\alpha+2b)} \|\Delta f_n\|^2_{L^2} + \frac{(d\alpha+2b-4)\mu}{2(d\alpha+2b)} \|\nabla f_n\|^2_{L^2} + \frac{\omega}{2} \|f_n\|^2_{L^2}. \label{est-m-omega}
	\end{align}
	As $\mu\geq 0$, $\omega>0$, and $\alpha>\frac{8-2b}{d}$, we infer that $(f_n)_n$ is a bounded sequence in $H^2(\R^d)$. As $G_\mu(f_n)=0$, we use \eqref{GN-ineq} to get
	\begin{align*}
	2\|\Delta f_n\|^2_{L^2} + \mu \|\nabla f_n\|^2_{L^2} &= \frac{d\alpha+2b}{2(\alpha+2)} \int |x|^{-b} |f_n(x)|^{\alpha+2} dx \\
	&\leq C \|\Delta f_n\|^{\frac{d\alpha+2b}{4}}_{L^2} \|f_n\|^{\frac{8-2b-(d-4)\alpha}{4}}_{L^2} \\
	&\leq C\(2\|\Delta f_n\|^2_{L^2} + \mu\|\nabla f_n\|^2_{L^2}\)^{\frac{d\alpha+2b}{8}}.
	\end{align*}
	Since $d\alpha + 2b>8$, there exists $C>0$ such that for all $n\geq 1$,
	\[
	2\|\Delta f_n\|^2_{L^2} +\mu \|\nabla f_n\|^2_{L^2} \geq C
	\]
	which together with \eqref{est-m-omega} imply $m_{\mu,\omega}>0$. We also have
	\begin{align} \label{lowe-boun}
	\int |x|^{-b} |f_n(x)|^{\alpha+2} dx \geq C.
	\end{align}
	
	We next show the existence of a minimizer for $m_{\mu, \omega}$. Since $(f_n)_n$ is a bounded sequence in $H^2(\R^d)$, there exists $f\in H^2(\R^d)$ such that up to a subsequence,
	\begin{itemize}[leftmargin=6mm]
		\item $f_n \rightharpoonup f$ weakly in $H^2(\R^d)$;
		\item $f_n \to f$ a.e. in $\R^d$;
		\item $f_n \to f$ strongly in $L^r_{\loc}(\R^d)$ for all $r\geq 1$ and $r<\frac{2d}{d-4}$ if $d\geq 5$.
	\end{itemize}
	We claim that 
	\[
	\int |x|^{-b} |f_n(x)|^{\alpha+2} dx \to \int |x|^{-b} |f(x)|^{\alpha+2} dx \text{ as } n \to \infty.
	\]
	This combined with \eqref{lowe-boun} imply $f\ne 0$. To see the claim, we take $\vareps>0$. For $R>0$ to be chosen later depending on $\vareps$, we write
	\begin{align*}
	\Big|\int |x|^{-b} |f_n(x)|^{\alpha+2} dx &- \int |x|^{-b} |f(x)|^{\alpha+2} dx \Big| \\
	&\leq \int_{|x|\leq R} |x|^{-b}||f_n(x)|^{\alpha+2} - |f(x)|^{\alpha+2}| dx \\
	&\quad + \int_{|x|> R} |x|^{-b}||f_n(x)|^{\alpha+2} - |f(x)|^{\alpha+2}| dx\\
	&=: (\text{I}) + (\text{II}).
	\end{align*}
	We estimate
	\begin{align*}
	(\text{II}) &\leq R^{-b} \(\|f_n\|^{\alpha+2}_{L^{\alpha+2}} + \|f\|^{\alpha+2}_{L^{\alpha+2}}\) \\
	&\leq R^{-b} \(\|f_n\|^{\alpha+2}_{H^2} + \|f\|^{\alpha+2}_{H^2}\) \\
	&\leq C R^{-b} <\frac{\vareps}{2}
	\end{align*}
	provided that $R = a \vareps^{-\frac{1}{b}}$ with $a>0$ sufficiently large. Here we have $\alpha+2<\frac{2d}{d-4}$ if $d\geq 5$ (as $\alpha<\frac{8-2b}{d-4}$) which ensures $H^2(\R^d) \subset L^{\alpha+2}(\R^d)$. 
	
	To estimate $(\text{I})$, we pick $\gamma=\frac{d}{b}-\delta$ and $r=\frac{d-\delta b}{d-b-\delta b}$ with some $\delta>0$ small and estimate
	\begin{align*}
	(\text{I}) &\leq \||x|^{-b}\|_{L^\gamma(|x|\leq R)} \||f_n|^{\alpha+2} -|f|^{\alpha+2}\|_{L^r(|x|\leq R)} \\
	&\leq C R^{\delta b} \(\|f_n\|^{\alpha+1}_{L^{(\alpha+2)r}} +\|f\|^{\alpha+1}_{L^{(\alpha+2)r}}\) \|f_n-f\|_{L^{(\alpha+2)r}(|x|\leq R)} \\
	&\leq CR^{\delta b} \(\|f_n\|^{\alpha+1}_{H^2} +\|f\|^{\alpha+1}_{H^2}\) \|f_n-f\|_{L^{(\alpha+2)r}(|x|\leq R)} \\
	&\leq CR^{\delta b} \|f_n-f\|_{L^{(\alpha+2)r}(|x|\leq R)},
	\end{align*}
	where $(\alpha+2)r >2$ and $(\alpha+2)r <\frac{2d}{d-4}$ if $d\geq 5$. The later comes from the assumption $\alpha<\frac{8-2b}{d-4}$ and $\delta>0$ small. As $f_n \to f$ strongly in $L^{(\alpha+2)r}(|x|\leq R)$, there exists $n_0\in \N$ large such that for all $n\geq n_0$, $(\text{I})<\frac{\vareps}{2}$. Collecting the above estimates, we prove the claim.
	
	By the weak convergence in $H^2(\R^d)$ and the above claim, we have
	\[
	G_\mu(f) \leq \liminf_{n\to \infty} G_\mu(f_n) =0.
	\]
	By Observation \eqref{obse-1}, there exists $\lambda_0\in (0,1]$ such that $G_\mu(f_{\lambda_0})=0$. Thus
	\begin{align*}
	m_{\mu,\omega}\leq S_{\mu,\omega}(f_{\lambda_0}) &=S_{\mu,\omega}(f_{\lambda_0}) - \frac{2}{d\alpha+2b} G_\mu(f_{\lambda_0}) \\
	&=\frac{d\alpha+2b-8}{2(d\alpha+2b)} \|\Delta f_{\lambda_0}\|^2_{L^2} + \frac{(d\alpha+2b-4)\mu}{2(d\alpha+2b)} \|\nabla f_{\lambda_0}\|^2_{L^2} + \frac{\omega}{2} \|f_{\lambda_0}\|^2_{L^2} \\
	&\leq \frac{d\alpha+2b-8}{2(d\alpha+2b)} \|\Delta f\|^2_{L^2} + \frac{(d\alpha+2b-4)\mu}{2(d\alpha+2b)} \|\nabla f\|^2_{L^2} + \frac{\omega}{2} \|f\|^2_{L^2} \\
	&\leq \liminf_{n \to \infty} \frac{d\alpha+2b-8}{2(d\alpha+2b)} \|\Delta f_n\|^2_{L^2} + \frac{(d\alpha+2b-4)\mu}{2(d\alpha+2b)} \|\nabla f_n\|^2_{L^2} + \frac{\omega}{2} \|f_n\|^2_{L^2} \\
	&= \liminf_{n\to \infty} S_{\mu,\omega}(f_n) - \frac{2}{d\alpha+2b} G_\mu(f_n) \\
	&=m_{\mu,\omega},
	\end{align*}
	where we have used the fact $\lambda_0\leq 1$ to get the third line. This shows that $\lambda_0=1$ or $G_\mu(f)=0$ and $S_{\mu,\omega} = m_{\mu,\omega}$. In particular, $f$ is a minimizer for $m_{\mu,\omega}$. 
	
	We now show that $f$ is a solution to \eqref{elli-equa}. As $f$ is a minimizer for $m_{\mu,\omega}$, there exists a Lagrange multiplier $\nu\in \R$ such that $S'_{\mu,\omega}(f) = \nu G'_\mu(f)$. In particular, we have
	\begin{align}\label{lagrange}
	(1-4\nu)\Delta^2f -\mu(1+2\nu)\Delta f +\omega f -\(1-\frac{d\alpha+2b}{2}\nu\)|x|^{-b}|f|^\alpha f = 0
	\end{align}
	in $H^{-2}(\R^d)$. Multiplying both sides of \eqref{lagrange} with $\overline{f}$ and integrating over $\R^d$, we get
	\begin{align} \label{equ-1}
	(1-4\nu)\|\Delta f\|^2_{L^2} + \mu(1+2\nu) \|\nabla f\|^2_{L^2} + \omega \|f\|^2_{L^2} - \(1-\frac{d\alpha+2b}{2}\nu\) \int|x|^{-b} |f|^{\alpha+2} dx =0.
	\end{align}
	Multiplying both sides of \eqref{lagrange} with $x\cdot \nabla \overline{f}$, integrating over $\R^d$, and taking the real part, we have
	\begin{align} \label{equ-2}
	\begin{aligned}
	(1-4\nu)\(2-\frac{d}{2}\) \|\Delta f\|^2_{L^2} &+\mu(1+2\nu)\(1-\frac{d}{2}\) \|\nabla f\|^2_{L^2} - \omega \frac{d}{2} \|f\|^2_{L^2} \\
	&-\(1-\frac{d\alpha+2b}{2}\nu\)\frac{b-d}{\alpha+2}\int |x|^{-b}|f|^{\alpha+2} dx =0.
	\end{aligned}
	\end{align}
	Here we have used the following identity (see \cite[(3.12)]{Dinh-JDDE})
	\[
	\rea \int (-\Delta)^\gamma f x\cdot \nabla \overline{f} dx = \(\gamma-\frac{d}{2}\) \|(-\Delta)^{\gamma/2} f\|^2_{L^2}
	\]
	and also
	\[
	\rea \int |x|^{-b} |f|^\alpha f x \cdot \nabla \overline{f} dx = \frac{b-d}{\alpha+2} \int |x|^{-b}|f|^{\alpha+2} dx.
	\]
	Combining \eqref{equ-1}, \eqref{equ-2}, and $G_\mu(f)=0$, we have
	\[
	(d\alpha+2b-8)\nu \|\Delta f\|^2_{L^2} + \frac{\mu}{2} (d\alpha+2b+4)\nu \|\nabla f\|^2_{L^2}=0,
	\]
	hence $\nu=0$ due to $d\alpha+2b>8$ and $\mu\geq 0$. In particular, $f$ is a solution to \eqref{elli-equa}. 
	
	Finally, we prove \eqref{chara-m-omega}. Denote
	\[
	n_{\mu,\omega}:= \inf \left\{S_{\mu,\omega}(f): f\in H^2(\R^d)\backslash \{0\}, f \text{ solves } \eqref{elli-equa}\right\}.
	\]
	Let $f$ be a minimizer for $m_{\mu,\omega}$. In particular, $f$ is a solution to \eqref{elli-equa}, hence $n_{\mu,\omega}\leq S_{\mu,\omega}(f)=m_{\mu,\omega}$. Let $f$ be a solution to \eqref{elli-equa}. Arguing as in \eqref{equ-1} and \eqref{equ-2}, we deduce that $G_\mu(f)=0$. Thus $m_{\mu,\omega}\leq S_{\mu,\omega}(f)$ which, by taking the infimum over all non-trivial solutions to \eqref{elli-equa}, yields $m_{\mu,\omega}\leq n_{\mu,\omega}$. The proof of Proposition \ref{prop-mini-prob} is now complete.
\end{proof}

Let $\Ac^+_{\mu,\omega}$ be as in \eqref{Ac-omega}. Our next aim is to show the following coercivity property of $\Ac^+_{\mu,\omega}$. 

\begin{proposition}[\bf Coercivity] \label{prop-coer} ~\\
	Let $d\geq 1$, $\mu\geq 0$, $0<b<\min\{d,4\}$, $\alpha>\frac{8-2b}{d}$, $\alpha<\frac{8-2b}{d-4}$ if $d\geq 5$, and $\omega>0$. Then there exists $C>0$ such that for any $f\in \Ac^+_{\mu,\omega}$,
	\begin{align} \label{coer-prope}
	G_\mu(f) \geq \max \left\{ C\int |x|^{-b}|f(x)|^{\alpha+2} dx, 2\(m_{\mu,\omega} - S_{\mu,\omega}(f)\) \right\}.
	\end{align}
\end{proposition}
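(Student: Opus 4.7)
I would exploit the $L^2$-preserving scaling $f_\lambda(x) := \lambda^{d/2} f(\lambda x)$ from Observation \ref{obse-1}, reducing everything to explicit polynomial analysis. Writing $h(\lambda) := S_{\mu,\omega}(f_\lambda)$, a direct computation produces
\[
h(\lambda) = A\lambda^4 + B\lambda^2 + C - D\lambda^q, \qquad \lambda h'(\lambda) = G_\mu(f_\lambda),
\]
where $A := \tfrac{1}{2}\|\Delta f\|_{L^2}^2$, $B := \tfrac{\mu}{2}\|\nabla f\|_{L^2}^2$, $C := \tfrac{\omega}{2}\|f\|_{L^2}^2$, $D := \tfrac{1}{\alpha+2}\int |x|^{-b}|f|^{\alpha+2}\,dx$, and $q := (d\alpha+2b)/2 > 4$. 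In particular $h'(1) = G_\mu(f)$. Since $G_\mu(f) \ge 0$, Observation \ref{obse-1} produces $\lambda_0 \ge 1$ with $G_\mu(f_{\lambda_0}) = 0$, i.e., $4A\lambda_0^4 + 2B\lambda_0^2 = qD\lambda_0^q$; the point $f_{\lambda_0}$ then lies on the Pohozaev manifold, so $h(\lambda_0) = S_{\mu,\omega}(f_{\lambda_0}) \ge m_{\mu,\omega}$.

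For the first bound $G_\mu(f) \ge C \int |x|^{-b}|f|^{\alpha+2}dx$, I would substitute the Pohozaev identity to write
\[
G_\mu(f) = 4A(1-\lambda_0^{4-q}) + 2B(1-\lambda_0^{2-q}), \quad \int |x|^{-b}|f|^{\alpha+2}dx = \tfrac{\alpha+2}{q}\bigl(4A\lambda_0^{4-q} + 2B\lambda_0^{2-q}\bigr).
\]
Since $q>4$, the ratio between the two quantities is strictly positive and increasing in $\lambda_0$, vanishing only at $\lambda_0 = 1$. The key input is then to convert the strict inequality $S_{\mu,\omega}(f) < m_{\mu,\omega}$ into a uniform lower bound $\lambda_0 \ge \lambda_\ast > 1$ on $\Ac^+_{\mu,\omega}$, which yields the desired universal constant $C > 0$.

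For the second bound $G_\mu(f) \ge 2(m_{\mu,\omega} - S_{\mu,\omega}(f))$, I would integrate the scaling identity:
\[
m_{\mu,\omega} - S_{\mu,\omega}(f) \;\le\; h(\lambda_0) - h(1) \;=\; \int_1^{\lambda_0} \tfrac{G_\mu(f_\lambda)}{\lambda}\,d\lambda \;=\; A(\lambda_0^4-1) + B(\lambda_0^2-1) - D(\lambda_0^q-1).
\]
Substituting $D\lambda_0^q = (4A\lambda_0^4+2B\lambda_0^2)/q$ from the Pohozaev identity into the right-hand side and comparing with $\tfrac{1}{2}G_\mu(f) = 2A+B-\tfrac{q}{2}D$, an elementary algebraic manipulation exploiting $q>4$ produces the claimed estimate.

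The main obstacle is providing the uniform separation $\lambda_0 \ge \lambda_\ast > 1$ required for the first inequality: a priori, $\lambda_0$ can approach $1$ as $S_{\mu,\omega}(f)$ approaches $m_{\mu,\omega}$ from below. Overcoming this quantitatively requires combining the variational characterization of $m_{\mu,\omega}$ furnished by Proposition \ref{prop-mini-prob} with a compactness-style argument tying the explicit structure of $h$ on $[1,\lambda_0]$ to the positive gap $m_{\mu,\omega} - S_{\mu,\omega}(f) > 0$.
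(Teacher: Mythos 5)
Your framework (the $L^2$-preserving scaling, $\lambda h'(\lambda)=G_\mu(f_\lambda)$, and the existence of $\lambda_0\ge 1$ with $G_\mu(f_{\lambda_0})=0$ so that $h(\lambda_0)\ge m_{\mu,\omega}$) is exactly the one the paper uses. But the central step you propose for the first inequality --- a uniform lower bound $\lambda_0\ge\lambda_\ast>1$ across all of $\Ac^+_{\mu,\omega}$ --- cannot be achieved, and no compactness device will rescue it. Take a minimizer $Q_\omega$ for $m_{\mu,\omega}$ and look at its rescalings $Q_{\omega,\lambda}(x)=\lambda^{d/2}Q_\omega(\lambda x)$ for $\lambda<1$ close to $1$. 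These lie in $\Ac^+_{\mu,\omega}$ (by Observation \ref{obse-1}, $G_\mu(Q_{\omega,\lambda})>0$; and $S_{\mu,\omega}(Q_{\omega,\lambda})=\phi(\lambda)<\phi(1)=m_{\mu,\omega}$ since $\phi$ has its maximum at $1$). As $\lambda\to1^{-}$, $G_\mu(Q_{\omega,\lambda})\to G_\mu(Q_\omega)=0$ while $\int|x|^{-b}|Q_{\omega,\lambda}|^{\alpha+2}\,dx\to\int|x|^{-b}|Q_\omega|^{\alpha+2}\,dx>0$, so the ratio $G_\mu(f)\big/\int|x|^{-b}|f|^{\alpha+2}dx$ degenerates. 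Hence the first inequality cannot hold with a uniform constant on all of $\Ac^+_{\mu,\omega}$ --- which, incidentally, also means the $\max$ in the proposition's statement is really a $\min$, and that is indeed all the paper's application in Theorem \ref{theo-gwp} uses.

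The ingredient you are missing is a dichotomy. Define $\psi(1)=12\|\Delta f\|_{L^2}^2+4\mu\|\nabla f\|_{L^2}^2-\tfrac{(d\alpha+2b+4)(d\alpha+2b)}{4(\alpha+2)}\int|x|^{-b}|f|^{\alpha+2}dx$. If $\psi(1)\ge 0$ (the potential term is dominated by the kinetic terms), a direct algebraic manipulation of $G_\mu(f)$ already gives $G_\mu(f)\ge C\int|x|^{-b}|f|^{\alpha+2}dx$, without ever invoking $\lambda_0$. If $\psi(1)<0$, one checks that $\psi$ is strictly decreasing on $(0,\infty)$, so $\psi(\lambda)<0$ for all $\lambda\ge 1$, which yields the differential inequality $(\lambda\phi'(\lambda))'\le -2\phi'(\lambda)$ on $[1,\lambda_0]$; integrating this (rather than substituting the Pohozaev relation at $\lambda_0$ and comparing polynomials, which does not close --- your proposed inequality $\tfrac12 G_\mu(f)\ge h(\lambda_0)-h(1)$ is not a universal algebraic fact and can fail for large $\lambda_0$) gives $G_\mu(f)\ge 2(h(\lambda_0)-h(1))\ge 2(m_{\mu,\omega}-S_{\mu,\omega}(f))$. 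In short: the right-hand alternative is not obtained from an unconditional algebraic identity, but from the differential inequality valid exactly in the complementary case, and the two cases together cover $\Ac^+_{\mu,\omega}$.
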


Before giving the proof of Proposition \ref{prop-coer}, we give the following observation.

\begin{observation} \label{obse-2}
	We have that $\Ac^+_{\mu,\omega}$ is an open set of $H^2(\R^d)$.
\end{observation}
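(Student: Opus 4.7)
The plan is to write $\Ac^+_{\mu,\omega}$ as the intersection of the two level-type sets $\{f:S_{\mu,\omega}(f)<m_{\mu,\omega}\}$ and $\{f:G_\mu(f)\ge 0\}$. The first is open because $S_{\mu,\omega}$ is continuous on $H^2(\R^d)$ (the quadratic terms are obviously continuous, and continuity of the weighted nonlinear term $\int|x|^{-b}|f|^{\alpha+2}dx$ on $H^2(\R^d)$ follows from the same compactness/Sobolev argument already used in the proof of Proposition \ref{prop-mini-prob} that handled the weight $|x|^{-b}$ by splitting into $\{|x|\le R\}$ and $\{|x|>R\}$). The sub-level set $\{G_\mu\ge 0\}$ is only closed in general, so the task really is to show that \emph{on} $\Ac^+_{\mu,\omega}$ one actually has a neighborhood inside $\{G_\mu\ge 0\}$.

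Fix $f_0\in\Ac^+_{\mu,\omega}$; I would split into two cases. First, if $f_0\neq 0$, I claim $G_\mu(f_0)>0$ strictly. Indeed, if $G_\mu(f_0)=0$ with $f_0\neq 0$, then $f_0$ is an admissible competitor in the definition \eqref{m-mu-omega} of $m_{\mu,\omega}$, so $S_{\mu,\omega}(f_0)\ge m_{\mu,\omega}$, contradicting $S_{\mu,\omega}(f_0)<m_{\mu,\omega}$. Strict positivity together with continuity of $G_\mu$ on $H^2(\R^d)$ then gives an open $H^2$-ball around $f_0$ on which $G_\mu>0$, and shrinking it if necessary we stay inside $\{S_{\mu,\omega}<m_{\mu,\omega}\}$.

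Second, if $f_0=0$, I would show $G_\mu(f)\ge 0$ for every $f$ in a small ball around $0$ by the Gagliardo–Nirenberg-type estimate already exploited in the proof of Proposition \ref{prop-mini-prob}:
\[
\frac{d\alpha+2b}{2(\alpha+2)}\int|x|^{-b}|f(x)|^{\alpha+2}dx\le C\|\Delta f\|_{L^2}^{\frac{d\alpha+2b}{4}}\|f\|_{L^2}^{\frac{8-2b-(d-4)\alpha}{4}}.
\]
Since $\alpha>\frac{8-2b}{d}$ forces $\frac{d\alpha+2b}{4}>2$, the right-hand side is $o(\|\Delta f\|_{L^2}^2)$ as $\|f\|_{H^2}\to 0$, so
\[
G_\mu(f)\ge 2\|\Delta f\|_{L^2}^2+\mu\|\nabla f\|_{L^2}^2-C\|f\|_{H^2}^{\frac{d\alpha+2b}{4}-2}\bigl(2\|\Delta f\|_{L^2}^2+\mu\|\nabla f\|_{L^2}^2\bigr)\ge 0
\]
for all $f$ with $\|f\|_{H^2}$ sufficiently small. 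Together with the open condition $S_{\mu,\omega}(f)<m_{\mu,\omega}$ (which is automatic near $0$ since $S_{\mu,\omega}(0)=0<m_{\mu,\omega}$), this gives an $H^2$-neighborhood of $0$ contained in $\Ac^+_{\mu,\omega}$.

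The only nontrivial ingredient is the strict inequality $G_\mu(f_0)>0$ in the first case, which uses the definition of $m_{\mu,\omega}$ as an infimum over the Pohozaev manifold rather than any delicate continuity argument; no main obstacle is anticipated. A brief remark would point out that the same dichotomy shows $\Ac^+_{\mu,\omega}\setminus\{0\}\subset\{G_\mu>0\}$, a fact that will be used later when proving invariance under the flow of \eqref{IBNLS}.
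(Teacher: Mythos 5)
Your proposal is correct and follows essentially the same route as the paper: both identify the key observation that $\Ac^+_{\mu,\omega}\setminus\{0\}=\{f:S_{\mu,\omega}(f)<m_{\mu,\omega},\,G_\mu(f)>0\}$ (so that the only delicate point is a neighborhood of the origin), and both handle the origin via the sharp Gagliardo--Nirenberg inequality \eqref{GN-ineq} to show $G_\mu\geq 0$ on a small $H^2$-ball. The paper writes the resulting small factor as $\|f\|_{H^2}^{\alpha}$ rather than your $\|f\|_{H^2}^{\frac{d\alpha+2b}{4}-2}$, but since $\frac{d\alpha+2b}{4}-2+\frac{8-2b-(d-4)\alpha}{4}=\alpha$ both exponents are positive and the conclusion is identical.
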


\begin{proof}[Proof of Observation \ref{obse-2}]
	We observe that
	\[
	\Ac^+_{\mu,\omega} = \left\{ f\in H^2(\R^d) : S_{\mu,\omega}(f)<m_{\mu,\omega}, G_\mu(f) > 0\right\} \cup \{0\}.
	\]
	In fact, if $S_{\mu,\omega}(f) < m_{\mu,\omega}$ and $G_\mu(f)=0$, then by the definition of $m_{\mu,\omega}$, we must have $f\equiv 0$. 
	
	To see that $\Ac^+_{\mu,\omega}$ is an open set of $H^2(\R^d)$, it suffices to prove that $0$ is an interior point of $\Ac^+_{\mu,\omega}$. That is there exists $\delta>0$ small such that if $f \in H^2(\R^d)\backslash \{0\}$ satisfying $\|f\|_{H^2} <\delta$, then $f\in \Ac^+_{\mu,\omega}$. As $m_{\mu,\omega}>0$, we have
	\[
	S_{\mu,\omega}(f) \leq \frac{1}{2}\|\Delta f\|^2_{L^2} + \frac{\mu}{2} \|\nabla f\|^2_{L^2} + \frac{\omega}{2}\|f\|^2_{L^2} \leq C(\mu,\omega) \|f\|^2_{H^2} \leq C(\mu,\omega) \delta <m_{\mu,\omega}
	\]
	provided that $\delta>0$ is taken sufficiently small. On the other hand, by \eqref{GN-ineq}, we have
	\begin{align*}
	G_\mu(f) &= 2\|\Delta f\|^2_{L^2} + \mu \|\nabla f\|^2_{L^2} - \frac{d\alpha+2b}{2(\alpha+2)}\int |x|^{-b} |f(x)|^{\alpha+2} dx \\
	&\geq 2\|\Delta f\|^2_{L^2} + \mu \|\nabla f\|^2_{L^2} - C \|\Delta f\|^{\frac{d\alpha+2b}{4}}_{L^2} \|f\|^{\frac{8-2b-(d-4)\alpha}{4}}_{L^2} \\
	&\geq 2\|\Delta f\|^2_{L^2} + \mu \|\nabla f\|^2_{L^2} - C \|\Delta f\|^2_{L^2} \|f\|^\alpha_{H^2}\\
	&\geq \(2-C\delta^\alpha\) \|\Delta f\|^2_{L^2} + \mu \|\nabla f\|^2_{L^2} \geq 0
	\end{align*}
	for $\delta>0$ small. The proof is complete.
\end{proof}

\begin{proof}[Proof of Proposition \ref{prop-coer}]
	We only consider $f\ne 0$ since the case $f=0$ is trivial. We consider two cases.
	
	{\bf Case 1.} Assume that
	\[
	12 \|\Delta f\|^2_{L^2} + 4\mu \|\nabla f\|^2_{L^2} - \frac{(d\alpha+2b+4)(d\alpha+2b)}{4(\alpha+2)}  \int |x|^{-b} |f(x)|^{\alpha+2} dx \geq 0.
	\]
	It follows that
	\begin{align*}
	12 \|\Delta f\|^2_{L^2} + 4\mu \|\nabla f\|^2_{L^2} \geq  \frac{(d\alpha+2b+4)(d\alpha+2b)}{4(\alpha+2)}  \int |x|^{-b} |f(x)|^{\alpha+2} dx.
	\end{align*}
	Hence
	\begin{align*}
	G_\mu(f) &= 2\|\Delta f\|^2_{L^2} +\mu\|\nabla f\|^2_{L^2} -\frac{d\alpha+2b}{2(\alpha+2)} \int |x|^{-b} |f(x)|^{\alpha+2} dx \\
	&\geq \frac{2(d\alpha+2b-8)}{d\alpha+2b+4} \|\Delta f\|^2_{L^2} + \frac{(d\alpha+2b-4)\mu}{d\alpha+2b+4} \|\nabla f\|^2_{L^2} \\
	&\geq C\int |x|^{-b} |f(x)|^{\alpha+2} dx.
	\end{align*}
	
	{\bf Case 2.} Assume that
	\begin{align} \label{case-2}
	12 \|\Delta f\|^2_{L^2} + 4\mu \|\nabla f\|^2_{L^2} - \frac{(d\alpha+2b+4)(d\alpha+2b)}{4(\alpha+2)}  \int |x|^{-b} |f(x)|^{\alpha+2} dx < 0.
	\end{align}
	Denote 
	\[
	\phi(\lambda):= E_{\mu,\omega}(f_\lambda) = \frac{\lambda^4}{2}\|\Delta f\|^2_{L^2} + \frac{\mu\lambda^2}{2} \|\nabla f\|^2_{L^2} + \frac{\omega}{2} \|f\|^2_{L^2} - \frac{\lambda^{\frac{d\alpha+2b}{2}}}{\alpha+2} \int |x|^{-b} |f(x)|^{\alpha+2} dx.
	\]
	We have
	\[
	\phi'(\lambda) = 2\lambda^3\|\Delta f\|^2_{L^2} +\mu \lambda \|\nabla f\|^2_{L^2} -\frac{d\alpha+2b}{2(\alpha+2)} \lambda^{\frac{d\alpha+2b-2}{2}} \int |x|^{-b} |f(x)|^{\alpha+2} dx = \frac{G_\mu(f_\lambda)}{\lambda}
	\]
	and
	\begin{align*}
	(\lambda \phi'(\lambda))' &= 8 \lambda^3 \|\Delta f\|^2_{L^2} + 2\mu \lambda \|\nabla f\|^2_{L^2} - \frac{(d\alpha+2b)^2}{4(\alpha+2)} \lambda^{\frac{d\alpha+2b-2}{2}} \int |x|^{-b} |f(x)|^{\alpha+2} dx \\
	&= -2 \phi'(\lambda) + \lambda^3 \psi(\lambda),
	\end{align*}
	where
	\[
	\psi(\lambda):= 12 \|\Delta f\|^2_{L^2} + 4\mu \lambda^{-2} \|\nabla f\|^2_{L^2} - \frac{(d\alpha+2b+4)(d\alpha+2b)}{4(\alpha+2)} \lambda^{\frac{d\alpha+2b-8}{2}} \int |x|^{-b} |f(x)|^{\alpha+2} dx.
	\]
	Note that $\psi(1)<0$ due to \eqref{case-2}. As $\psi(\lambda)$ is strictly decreasing on $(0,+\infty)$, we have $\psi(\lambda) \leq \psi(1)<0$ for all $\lambda \geq 1$, hence
	\begin{align} \label{coer-prope-prof-2}
	(\lambda \phi'(\lambda))' \leq -2\phi'(\lambda), \quad \forall \lambda \geq 1. 
	\end{align}
	As $f\ne 0$, we have $G_\mu(f)>0$. By Observation \eqref{obse-1}, there exists $\lambda_0>1$ such that $G_\mu(f_{\lambda_0})=0$ or $\lambda_0 \phi'(\lambda_0)=0$. By the definition of $m_{\mu,\omega}$, we have $\phi(\lambda_0)=S_{\mu,\omega}(f_{\lambda_0}) \geq m_{\mu,\omega}$. Integrating \eqref{coer-prope-prof-2} over $(1,\lambda_0)$, we obtain
	\[
	G_\mu(f) \geq 2\(S_{\mu,\omega}(f_{\lambda_0})- S_{\mu,\omega}(f)\) \geq 2\(m_{\mu,\omega}-S_{\mu,\omega}(f)\).
	\]
	Collecting the above cases, we finish the proof of Proposition \ref{prop-coer}.
\end{proof}

\section{Global existence and energy scattering}
\label{S5}
\setcounter{equation}{0}

In this section, we give the proofs of the global existence and the energy scattering given in Theorem \ref{theo-gwp} and Theorem \ref{theo-scat}

\begin{proof}[Proof of Theorem \ref{theo-gwp}]
	The proof is divided into several steps.
	
	{\bf Step 1.} We first show the invariance of $\Ac^+_{\mu,\omega}$ under the flow of \eqref{IBNLS}. By the conservation of mass and energy, we have $S_{\mu,\omega}(u(t))= S_{\mu,\omega}(u_0) <m_{\mu,\omega}$ for all $t\in I_{\max}$, where $I_{\max}$ is the maximal time interval of existence. We now prove that $G_\mu(u(t))\geq 0$ for all $t\in I_{\max}$. Assume by contradiction that there exists $t_0\in I_{\max}$ such that $G_\mu(u(t_0)) <0$. By the continuity, there exists $t_1 \in I_{\max}$ such that $G_\mu(u(t_1)) =0$, which, by the definition of $m_{\mu,\omega}$, yields $S_{\mu,\omega}(u(t_1)) \geq m_{\mu,\omega}$ which is a contradiction. This shows that $u(t) \in \Ac^+_{\mu,\omega}$ for all $t\in I_{\max}$. We also have
	\begin{align*}
	\frac{d\alpha+2b-8}{2(d\alpha+2b)} \|\Delta u(t)\|^2_{L^2} &+ \frac{(d\alpha+2b-4)\mu}{2(d\alpha+2b)} \|\nabla u(t)\|^2_{L^2} + \frac{\omega}{2} \|u(t)\|^2_{L^2} \\
	&=S_{\mu,\omega}(u(t)) - \frac{2}{d\alpha+2b} G_\mu(u(t)) \\
	&\leq S_{\mu,\omega}(u(t)) <m_{\mu,\omega}, \quad \forall t\in I_{\max}
	\end{align*}
	which implies
	\begin{align} \label{boun-solu}
	\|u(t)\|_{H^2} \leq C, \quad \forall t\in I_{\max}.
	\end{align}
	The blow-up alternative ensures that the solution exists globally in time.
	
	{\bf Step 2.} We next show that there exists $R_0>0$ sufficiently large such that $\chi_R u(t) \in \Ac^+_{\mu,\omega}$ for all $R\geq R_0$ and all $t\in \R$, where $\chi_R(x)=\chi(x/R)$ with $\chi\in C^\infty_0(\R^d)$ satisfying $0\leq \chi \leq 1$, $\chi(x)=1$ on $|x|\leq 1/2$, and $\chi(x)=0$ on $|x|\geq 1$. In particular, Proposition \ref{prop-coer} implies that
	\begin{align} \label{coer-est-solu}
	G_\mu(\chi_Ru(t)) \geq C \int |x|^{-b} |\chi_R(x) u(t,x)|^{\alpha+2} dx, \quad \forall R\geq R_0, \quad \forall t\in \R.
	\end{align}
	To this end, we recall the following useful identities: (see \cite{DM} for \eqref{iden-1} and \cite{Dinh-NON} for \eqref{iden-2}) 
	\begin{align} \label{iden-1}
	\int |\nabla(\chi f)|^2 dx = \int \chi^2 |\nabla f|^2 dx -\int \chi\Delta \chi |f|^2dx
	\end{align}
	and
	\begin{align}
	\int |\Delta(\chi f)|^2 dx &= \int \chi^2 |\Delta f|^2 dx \nonumber \\
	&\quad + \int (\Delta \chi)^2 |f|^2 dx - 4\sum_{k,l} \rea \int \chi \partial_k f \partial^2_{kl} \chi \partial_l \overline{f} dx + 2 \int |\nabla \chi|^2 |\nabla f|^2 dx \nonumber\\
	&\quad + 2\int \chi \Delta \chi |\nabla f|^2 dx + 2 \rea \int \chi \Delta f \Delta \chi \overline{f} dx + 4 \rea \int \nabla \chi \cdot \nabla f \Delta \chi \overline{f} dx.\label{iden-2}
	\end{align}
	From this and \eqref{boun-solu}, we have
	\begin{align*}
	\|\Delta (\chi_Ru(t))\|^2_{L^2} &= \int \chi_R^2 |\Delta u(t)|^2 dx + O(R^{-2}), \\ 
	\|\nabla (\chi_R u(t))\|^2_{L^2} &= \int \chi^2_R |\nabla u(t)|^2 dx + O(R^{-2}), 
	\end{align*}
	for all $t\in \R$. In particular, we have
	\begin{align*}
	S_{\mu,\omega}(\chi_R u(t)) &= \int \chi_R^2 \(\frac{1}{2}|\Delta u(t)|^2 + \frac{\mu}{2}|\nabla u(t)|^2 +\frac{\omega}{2}|u(t)|^2\) dx \\
	&\quad -\frac{1}{\alpha+2} \int |x|^{-b}|\chi_R u(t)|^{\alpha+2} dx + O(R^{-2}+R^{-b}) \\
	&\leq S_{\mu,\omega} (u(t)) + O(R^{-2}+ R^{-b}) \\
	&=S_{\mu,\omega}(u_0) + O(R^{-2}+R^{-b}), \quad \forall t\in\R,
	\end{align*}
	where we have used that
	\begin{align*}
	\Big|\int |x|^{-b} |\chi_R u(t)|^{\alpha+2}dx - \int |x|^{-b} |u(t)|^{\alpha+2} dx\Big| &\leq C\int_{|x|\geq R/2} |x|^{-b} |u(t)|^{\alpha+2} dx \\
	&\leq CR^{-b} \|u(t)\|^{\alpha+2}_{L^{\alpha+2}} \\
	&\leq CR^{-b}\|u(t)\|^{\alpha+2}_{H^2} \\
	&\leq CR^{-b}, \quad \forall t\in \R. 
	\end{align*}
	Thus we get
	\begin{align} \label{est-S-omega}
	S_{\mu,\omega}(\chi_R u(t)) <m_{\mu,\omega} - \delta/2, \quad \forall R\geq R_0, \quad \forall t\in \R
	\end{align}
	provided that $R_0>0$ is taken large so that $O(R_0^{-2}+R_0^{-b}) <\delta/2$, where 
	\[
	\delta:=m_{\mu,\omega}-S_{\mu,\omega}(u_0)>0.
	\]
	By increasing $R_0$ if necessary, it suffices to prove that $G_\mu(\chi_Ru(t))\geq 0$ for all $R\geq R_0$ and all $t\in \R$. Suppose by contradiction that there exist $R_1 \geq R_0$ and $t_1 \in  \R$ such that $G_\mu (\chi_{R_1} u(t_1)) < 0$. Note that $\lim_{R\to \infty} G_\mu (\chi_R u(t_1)) = G_\mu (u(t_1)) > 0$ (here we can assume $u_0 \ne 0$, so is $u(t_1)$, otherwise the claim is trivial). Let $R_2 > R_1$ be the smallest value such that $G_\mu (\chi_{R_2} u(t_1)) = 0$, that is, $G_\mu (\chi_R u(t_1)) < 0$ for all $R \in [R_1 ,R_2)$. Since $G_\mu (\chi_{R_2} u(t_1)) = 0$ and $S_{\mu,\omega} (\chi_{R_2} u(t_1)) < m_{\mu,\omega}$ (see \eqref{est-S-omega}), we have $\chi_{R_2} u(t_1) \in \Ac^+_{\mu,\omega}$. Because $\Ac^+_{\mu,\omega}$ is open in $H^2(\R^d)$, we deduce, by the continuity, that $G_\mu (\chi_R u(t_1)) \geq 0$ for $R < R_2$ and $R$ close to $R_2$ . This contradicts to the choice of $R_2$. We therefore prove that $\chi_R u(t)\in \Ac^+_{\mu,\omega}$ for all $R\geq R_0$ and all $t\in \R$. 
	
	To see \eqref{coer-est-solu}, we apply Proposition \ref{prop-coer} to get
	\[
	G_\mu(\chi_R u(t)) \geq \max \left\{ C\int |x|^{-b}|\chi_R u(t)|^{\alpha+2} dx, 2\(m_{\mu,\omega} - S_{\mu,\omega}(\chi_R u(t))\) \right\}
	\]
	for all $R\geq R_0$ and all $t\in \R$. This gives \eqref{coer-est-solu} since
	\[
	2\(m_{\mu,\omega} - S_{\mu,\omega}(\chi_Ru(t))\) >\delta
	\]
	and
	\[
	\int |x|^{-b} |\chi_R u(t)|^{\alpha+2} dx \leq \|u(t)\|^{\alpha+2}_{H^2} \leq C
	\]
	for all $R>0$ and all $t\in \R$ due to \eqref{boun-solu}. 
	
	{\bf Step 3.} We prove the space-time estimates \eqref{space-time-est-non-rad} and \eqref{space-time-est-rad}. To do this, we introduce $\zeta: [0,\infty) \to [0,2]$ a smooth function satisfying
	\[
	\zeta(r)= \left\{
	\begin{array}{cl}
	2 &\text{if } 0\leq r\leq 1, \\
	0 &\text{if } r\geq 2,
	\end{array}
	\right.
	\]
	and define
	\[
	\vartheta(r):= \int_0^r \int_0^s \zeta(\tau) d\tau ds. 
	\]
	For $R>0$, we define a radial function
	\[
	\varphi_R(x)=\varphi_R(r):= R^2 \vartheta(r/R), \quad r=|x|.
	\]
	One readily check that
	\[
	0\leq \varphi''_R(r) \leq 2, \quad 0\leq \frac{\varphi'_R(r)}{r} \leq 2, \quad 2d -\Delta \varphi_R(x) \geq 0, \quad \forall r\geq 0, \quad \forall x \in \R^d. 
	\]
	We define the virial quantity
	\[
	\Mcal_{\varphi_R}(t):= 2\ima \int \nabla \varphi_R \cdot \nabla u(t,x) \overline{u}(t,x) dx. 
	\]
	By \eqref{boun-solu}, we have
	\begin{align} \label{est-M-varphi-R}
	|\Mcal_{\varphi_R}(t)| \leq \|\nabla \varphi_R\|_{L^\infty} \|u(t)\|_{L^2} \|\nabla u(t)\|_{L^2} \leq CR, \quad \forall t\in \R.
	\end{align}
	Moreover, we have the following virial identity due to \cite[Lemma 2.3]{Dinh-AML}:
	\begin{align} \label{viri-iden}
	\begin{aligned}
	\frac{d}{dt} \Mcal_{\varphi_R}(t) &= \int \Delta^3 \varphi_R |u(t)|^2 dx - 2\int \Delta^2\varphi_R |\nabla u(t)|^2 dx \\
	&\quad -4 \sum_{k,l} \int \partial^2_{kl} \Delta \varphi_R \partial_k \overline{u}(t) \partial_l u(t) dx - \mu \int \Delta^2\varphi_R |u(t)|^2 dx \\
	&\quad +8 \int \frac{\varphi'_R}{r} \sum_k |\nabla \partial_k u(t)|^2 + \(\frac{\varphi''_R}{r^2}-\frac{\varphi'_R}{r^3} \) \sum_k |x \cdot \nabla \partial_k u(t)|^2 dx \\
	&\quad +4 \mu \int \frac{\varphi'_R}{r}|\nabla u(t)|^2 + \( \frac{\varphi''_R}{r^2}-\frac{\varphi'_R}{r^3} \)|x\cdot \nabla u(t)|^2 dx \\
	&\quad -\frac{1}{\alpha+2} \int \(2\alpha \varphi''_R + (2(d-1)\alpha+4b)\frac{\varphi'_R}{r}\) |x|^{-b} |u(t)|^{\alpha+2} dx
	\end{aligned}
	\end{align}
	for all $t\in \R$. As $\varphi_R(x)=|x|^2$ for $|x|\leq R$, we have
	\begin{align*}
	\frac{d}{dt}\Mcal_{\varphi_R}(t) &= 4\left(2\int_{|x|\leq R} |\Delta u(t)|^2 +  \mu \int_{|x|\leq R} |\nabla u(t)|^2 -\frac{d\alpha+2b}{2(\alpha+2)} \int_{|x|\leq R} |x|^{-b} |u(t)|^{\alpha+2} dx\right) \\
	&\quad + \int \Delta^3\varphi_R |u(t)|^2 dx - 2\int \Delta^2\varphi_R |\nabla u(t)|^2 dx \\
	&\quad -4 \sum_{k,l} \int \partial^2_{kl}\Delta \varphi_R \partial_k \overline{u}(t) \partial_l u(t) dx -\mu \int \Delta^2\varphi_R |u(t)|^2 dx \\
	&\quad +8 \int_{|x|>R} \frac{\varphi'_R}{r} \sum_k |\nabla \partial_k u(t)|^2 + \(\frac{\varphi''_R}{r^2}-\frac{\varphi'_R}{r^3}\) \sum_k |x\cdot \nabla \partial_k u(t)|^2 dx \\
	&\quad +4 \mu \int_{|x|>R} \frac{\varphi'_R}{r}|\nabla u(t)|^2 + \( \frac{\varphi''_R}{r^2}-\frac{\varphi'_R}{r^3} \)|x\cdot \nabla u(t)|^2 dx \\
	&\quad - \frac{1}{\alpha+2} \int_{|x|>R} \(2\alpha \varphi''_R + (2(d-1)\alpha+4b)\frac{\varphi'_R}{r}\) |x|^{-b} |u(t)|^{\alpha+2} dx.
	\end{align*}
	By integration by parts, we note that
	\[
	\sum_k \int |\nabla \partial_k u(t)|^2 dx = \sum_{k,l} \int \partial^2_{kl} \overline{u}(t) \partial^2_{kl} u(t) dx = \sum_{k,l} \int \partial^2_k \overline{u}(t) \partial^2_l u(t) dx = \int |\Delta u(t)|^2 dx.
	\]
	By the choice of $\varphi_R$, we infer from \eqref{boun-solu} that
	\begin{align*}
	\Big| \int \Delta^3 \varphi_R |u(t)|^2 dx \Big| &\lesssim R^{-4}, \\
	\Big| \int \Delta^2 \varphi_R |\nabla u(t)|^2 dx\Big| + \Big|\sum_{k,l} \int \partial^2_{k,l} \Delta \varphi_R \partial_k \overline{u}(t) \partial_l u(t) dx\Big| &+ \Big|\int \Delta^2\varphi_R |u(t)|^2 dx \Big| \lesssim R^{-2},
	\end{align*}
	for all $t\in \R$. By Cauchy-Schwarz' inequality, we have
	\begin{align*}
	\frac{\varphi'_R}{r} |\nabla \partial_k u|^2 &+ \(\frac{\varphi''_R}{r^2}-\frac{\varphi'_R}{r^3}\) |x\cdot \nabla \partial_k u|^2 \\
	&=\frac{\varphi''_R}{r} |x\cdot \nabla_k u|^2 + \frac{\varphi'_R}{r}|\nabla \partial_k u|^2 - \frac{\varphi'_R}{r^3}|x\cdot \nabla \partial_k u|^2 \\
	&\geq \frac{\varphi''_R}{r} |x\cdot \nabla_k u|^2 \geq 0
	\end{align*}
	as $\varphi'_R, \varphi''_R\geq 0$. Similarly, we have
	\[
	\frac{\varphi'_R}{r} |\nabla u|^2 + \(\frac{\varphi''_R}{r^2}-\frac{\varphi'_R}{r^3}\) |x\cdot \nabla u|^2 \geq \frac{\varphi''_R}{r^2} |x \cdot \nabla u|^2 \geq 0.
	\]
	Moreover, since $\varphi''_R$ and $\frac{\varphi'_R}{r}$ are bounded on $\R^d$, we infer from Sobolev embedding and \eqref{boun-solu} that
	\[
	\Big|\int_{|x|>R} \(2\alpha \varphi''_R + (2(d-1)\alpha+4b)\frac{\varphi'_R}{r}\) |x|^{-b} |u(t)|^{\alpha+2} dx \Big| \lesssim R^{-b}.
	\]
	Thus we arrive at
	\begin{align*}
	\frac{d}{dt}\Mcal_{\varphi_R}(t) &\geq 4\left(2\int_{|x|\leq R} |\Delta u(t)|^2 +  \mu \int_{|x|\leq R} |\nabla u(t)|^2 -\frac{d\alpha+2b}{2(\alpha+2)} \int_{|x|\leq R} |x|^{-b} |u(t)|^{\alpha+2} dx\right) \\
	&\quad + O(R^{-2} + R^{-b}) 
	\end{align*}
	for all $t\in \R$. On the other hand, we have from \eqref{iden-1} and \eqref{iden-2} that
	\begin{align*}
	\int |\nabla (\chi_R u)|^2 dx &=\int \chi^2_R |\nabla u|^2 dx + O(R^{-2}) \\
	&= \int_{|x|\leq R} |\nabla u|^2 dx - \int_{R/2\leq |x|\leq R} (1-\chi^2_R) |\nabla u|^2 dx + O(R^{-2}), \\
	\int |\Delta(\chi_R u)|^2 dx &= \int \chi^2_R |\Delta u|^2 dx +  O\left(R^{-2}\right) \\
	&= \int_{|x| \leq R} |\Delta u|^2 dx - \int_{R/2 \leq |x| \leq R} (1-\chi^2_R) |\Delta u|^2 dx + O(R^{-2}).
	\end{align*}
	We also have
	\begin{align*}
	\int |x|^{-b} |\chi_R u|^{\alpha+2} dx &= \int_{|x| \leq R} |x|^{-b} |u|^{\alpha+2} - \int_{R/2 \leq |x| \leq R} (1-\chi^{\alpha+2}_R) |x|^{-b} |u|^{\alpha+2}dx \\
	&= \int_{|x| \leq R} |x|^{-b} |u|^{\alpha+2} + O(R^{-b}).
	\end{align*}
	This implies that
	\begin{align*}
	2\int_{|x| \leq R} |\Delta u|^2 dx &+\mu \int_{|x|\leq R} |\nabla u(t)|^2 dx - \frac{d\alpha+2b}{2(\alpha+2)} \int_{|x| \leq R} |x|^{-b} |u|^{\alpha+2} dx \\
	&= 2\int |\Delta(\chi_R u)|^2 dx  +\mu \int |\nabla(\chi_R u)|^2 dx -\frac{d\alpha+2b}{2(\alpha+2)} \int |x|^{-b} |\chi_R u|^{\alpha+2} dx \\
	&\quad + 2\int (1-\chi^2_R) |\Delta u|^2 dx +\mu\int (1-\chi^2_R) |\nabla u|^2 dx + O(R^{-2} +R^{-b}) \\
	&\geq G_\mu(\chi_R u) + O(R^{-2}+R^{-b})
	\end{align*}
	as $0\leq \chi_R \leq 1$ and $\mu\geq 0$. We obtain
	\[
	\frac{d}{dt} \Mcal_{\varphi_R}(t) \geq 4 G_\mu(\chi_Ru(t)) + O(R^{-2}+R^{-b}), \quad \forall t\in \R. 
	\]
	Thanks to \eqref{coer-est-solu}, we have for all $R\geq R_0$,
	\[
	C \int |x|^{-b} |\chi_R u(t)|^{\alpha+2} dx \leq \frac{d}{dt} \Mcal_{\varphi_R}(t) + O(R^{-2}+R^{-b}), \quad \forall t\in \R. 
	\]
	Thus for any interval $I \in \R$, we infer from \eqref{est-M-varphi-R} that
	\[
	\int_I \int_{\R^d} |x|^{-b} |\chi_R u(t)|^{\alpha+2} dxdt \lesssim R + (R^{-2}+R^{-b})|I|. 
	\]
	By the choice of $\chi_R$, we deduce
	\[
	\int_I \int_{|x|\leq R/2} |x|^{-b} |u(t)|^{\alpha+2} dxdt \lesssim R + (R^{-2}+R^{-b}) |I|.
	\]
	On the other hand, we have
	\[
	\int_{|x|\geq R/2} |x|^{-b} |u(t)|^{\alpha+2} dx \lesssim R^{-b} \|u(t)\|^{\alpha+2}_{H^2} \lesssim R^{-b}
	\]
	which yields
	\[
	\int_I \int_{\R^d} |x|^{-b} |u(t)|^{\alpha+2} dxdt \lesssim R + (R^{-2}+R^{-b}) |I| \lesssim R + R^{-\min\{2,b\}}|I|.
	\]
	If $|I|$ is sufficiently large depending on $R_0$, we take $R=|I|^{\frac{1}{\min\{2,b\}}}$ and get \eqref{space-time-est-non-rad}. Otherwise, if $|I|$ is sufficiently small, then 
	\[
	\int_I \int_{\R^d} |x|^{-b} |u(t)|^{\alpha+2} dx dt \lesssim |I| \lesssim |I|^{\frac{1}{1+\min\{2,b\}}}.
	\]
	Finally, if we assume in addition that the solution is radially symmetric, then we can improve the space-time estimate \eqref{space-time-est-non-rad}. More precisely, by using the radial Sobolev embedding (see e.g., \cite{Strauss}): for $d\geq 2$,
	\[
	\||x|^{\frac{d-1}{2}} f\|_{L^\infty} \lesssim \|f\|_{H^1}, \quad \forall f\in H^1_{\rad}(\R^d)
	\]
	and \eqref{boun-solu}, we have
	\begin{align*}
	\left|\int_{|x|\geq R/2} |x|^{-b} |u(t,x)|^{\alpha+2} dx\right|&\lesssim \(\sup_{|x|>R/2}|x|^{-b} |u(t,x)|^\alpha\) \|u(t)\|^2_{L^2} \\
	&\lesssim R^{-\frac{(d-1)\alpha +2b}{2}} \|u(t)\|^\alpha_{H^1} \|u(t)\|^2_{L^2} \\
	&\lesssim R^{-\frac{(d-1)\alpha+2b}{2}} \\
	&\lesssim R^{-2}.
	\end{align*}
	Note that $\frac{(d-1)\alpha+2b}{2}>2$ due to $\alpha>\frac{8-2b}{d}$ and $d\geq 2$. In particular, we have
	\[
	\frac{d}{dt}\Mcal_{\varphi_R}(t) \geq 4G_\mu(\chi_R u(t)) + O(R^{-2}), \quad \forall t\in \R.
	\]
	Arguing as above, we conclude \eqref{space-time-est-rad}. The proof is complete.
\end{proof}

\begin{proof}[Proof of Theorem \ref{theo-scat}]
	Let $a\in \R$ and $(k,r)$ be as in \eqref{qrkm}. By Sobolev embedding and Strichartz estimates in Lorentz spaces, and \eqref{boun-solu}, there exists a constant $E$ such that
	\[
	\|U_\mu(t-a) u(a)\|_{L^k_t(\R, L^{r,2}_x)} \leq C\|\scal{\Delta} U_\mu(t-a) u(a)\|_{L^k_t(\R, L^{\overline{r},2}_x)} \leq C\|u(a)\|_{H^2_x} \leq E,
	\]
	where 
	\[
	\overline{r} = \frac{2d\alpha(\alpha+2)}{d\alpha^2 +4(d-2)\alpha-16+4b}
	\]
	with $(k,\overline{r})\in B$. 
	
	Given a small constant $\vareps>0$. There exist an integer number $J=J(\vareps, E)$ and disjoint intervals $I_j, j=1,..., J$ such that 
	\[
	\|U_\mu(t-a) u(a)\|_{L^k_t(I_j, L^{r,2}_x)} <\vareps, \quad \forall j=1, ..., J.
	\]
	In the following, the constant $C$ may vary but it depends only on $E$. We will show that there exists $T=T(\vareps)>0$ large such that
	\begin{align} \label{est-I-j}
	\|u\|^k_{L^k_t(I_j, L^{r,2}_x)} \leq CT, \quad \forall j=1,...,J.
	\end{align}
	Summing over all sub-intervals $I_j$, we get the boundedness of $\|u\|_{L^k_t(\R, L^{r,2}_x)}$ which, by Lemma \ref{lem-scat-cond}, yields the energy scattering.
	
	By the $H^2$-boundedness (see \eqref{boun-solu}), Sobolev inequality in space, and H\"older inequality in time, we have
	\begin{align} \label{est-Sob-Hol}
	\|u\|^k_{L^k_t(I,L^{r,2}_x)} \leq C|I|.
	\end{align}
	From this, it suffices to show \eqref{est-I-j} for sub-intervals $I_j$ with $|I_j|>2T$. Fix such an interval $I=(a,c)$ with $|I|>2T$. We will show that
	\begin{align} \label{est-I}
	\|u\|^k_{L^k_t(I,L^{r,2}_x)} \leq CT.
	\end{align}
	The proof of \eqref{est-I} is reduced to show that there exists $t_1 \in (a,a+T)$ so that 
	\begin{align} \label{est-T}
	\left\|\int_a^{t_1} U_\mu(t-\tau) (|x|^{-b}|u|^\alpha u)(\tau) d\tau\right\|_{L^k_t([t_1, +\infty), L^{r,2}_x)} \leq C\vareps^\gamma
	\end{align}
	for some constant $\gamma>0$. Assume \eqref{est-T} for the moment, let us prove \eqref{est-I}. We write
	\[
	U_\mu(t-t_1) u(t_1) = U_\mu(t-a) u(a) +i \int_a^{t_1} U_\mu(t-\tau)(|x|^{-b}|u|^\alpha u)(\tau) d\tau
	\]
	and have
	\begin{align*}
	\|U_\mu(t-t_1) u(t_1)\|_{L^k_t([t_1,c], L^{r,2}_x)} &\leq \|U_\mu(t-a) u(a)\|_{L^k_t([t_1,c],L^{r,2}_x)}  \\
	&\quad + \left\| \int_a^{t_1} U_\mu(t-\tau) (|x|^{-b}|u|^\alpha u)(\tau) d\tau\right\|_{L^k_t([t_1,c], L^{r,2}_x)} \\
	&\leq \|U_\mu(t-a) u(a)\|_{L^k_t(I,L^{r,2}_x)}  \\
	&\quad + \left\| \int_a^{t_1} U_\mu(t-\tau) (|x|^{-b}|u|^\alpha u)(\tau) d\tau\right\|_{L^k_t([t_1,+\infty), L^{r,2}_x)} \\
	&\leq \vareps + C\vareps^\gamma.
	\end{align*}
	On the other hand, using
	\[
	u(t) = U_\mu(t-t_1) u(t_1) + i \int_{t_1}^t U_\mu(t-\tau) (|x|^{-b}|u|^\alpha u)(\tau) d\tau
	\]
	and Strichartz estimate \eqref{str-est-non-adm}, we have
	\begin{align*}
	\|u\|_{L^k_t([t_1,c], L^{r,2}_x)} &\leq \|U_\mu(t-t_1)u(t_1)\|_{L^k_t([t_1,c],L^{r,2}_x)} \\
	&\quad + \left\| \int_{t_1}^t U_\mu(t-\tau) (|x|^{-b}|u|^\alpha u)(\tau) d\tau \right\|_{L^k_t([t_1, c], L^{r,2}_x)} \\
	&\leq \vareps + C\vareps^\gamma + C\||x|^{-b}|u|^\alpha u\|_{L^{m'}_t([t_1,c], L^{r',2}_x)} \\
	&\leq \vareps  + C\vareps^\gamma + C\|u\|^{\alpha+1}_{L^k_t([t_1,c], L^{r,2}_x)}.
	\end{align*}
	Taking $\vareps>0$ small, the continuity argument yields
	\[
	\|u\|_{L^k_t([t_1,c], L^{r,2}_x)} \leq C\vareps + C\vareps^\gamma.
	\]
	Thus we get
	\[
	\|u\|^k_{L^k_t(I,L^{r,2}_x)} \leq \|u\|^k_{L^k_t([a,t_1],L^{r,2}_x)} + \|u\|^k_{L^k_t([t_1,c],L^{r,2}_x)} \leq \left(C\vareps + C\vareps^\gamma\right)^k + CT
	\]
	which proves \eqref{est-I}, where we have used \eqref{est-Sob-Hol} with $|t_1-a|<T$.
	
	We now prove \eqref{est-T}. To do this, we recall the following space-time estimate (see \eqref{space-time-est-non-rad} and \eqref{space-time-est-rad})
	\begin{align} \label{space-time-est}
	\int_I \int_{\R^d} |x|^{-b}|u(t,x)|^{\alpha+2} dx dt \leq C |I|^{\varrho},
	\end{align}
	where
	\[
	\varrho:= \left\{
	\begin{array}{cl}
	\frac{1}{1+\min\{2,b\}} &\text{for general solutions}, \\
	\frac{1}{3} &\text{for radial solutions}.
	\end{array}
	\right.
	\] 
	From \eqref{space-time-est}, we deduce that there exists $t_0 \in \left[a+\frac{T}{4},a+\frac{T}{2}\right]$ such that
	\begin{align} \label{est-t0}
	\int_{t_0}^{t_0+\vareps T^{1-\varrho}} \int_{\R^d}|x|^{-b}|u(t,x)|^{\alpha+2} dx dt \leq C\vareps.
	\end{align}
	In fact, we split $\left[a+\frac{T}{4},a+\frac{T}{2}\right]$ into $L=\vareps^{-1}T^\varrho$ sub-intervals $J_l$ of length $\vareps T^{1-\varrho}$. Thanks to \eqref{space-time-est}, we have
	\begin{align*}
	L \min_{1\leq l\leq L} \int_{J_l} \int_{\R^d} |x|^{-b}|u(t,x)|^{\alpha+2} dx dt &\leq \sum_{l=1}^L \int_{J_l} \int_{\R^d} |x|^{-b}|u(t,x)|^{\alpha+2} dx dt \\
	&=\int_{a+T/4}^{a+T/2} \int_{\R^d} |x|^{-b}|u(t,x)|^{\alpha+2} dx dt \\
	&\leq CT^\varrho.
	\end{align*}
	Thus there exists $l_0$ such that 
	\[
	\int_{J_{l_0}} \int_{\R^d} |x|^{-b}|u(t,x)|^{\alpha+2} dx dt \leq CT^\varrho L^{-1} =C\vareps.
	\]
	This shows the observation by setting $t_0=\inf J_{l_0}$.
	
	Now we take $t_1= t_0+\vareps T^{1-\varrho}$. As $t_0 <a+T/2$, we have $t_1<a+T$. To show \eqref{est-T}, we write
	\begin{align*}
	\int_a^{t_1} U_\mu(t-\tau) (|x|^{-b}|u|^\alpha u)(\tau) d\tau &= \int_a^{t_0} U_\mu(t-\tau) (|x|^{-b}|u|^\alpha u)(\tau) d\tau \\
	&\quad + \int_{t_0}^{t_1} U_\mu(t-\tau) (|x|^{-b}|u|^\alpha u)(\tau) d\tau =: F_1(t) + F_2(t).
	\end{align*}
	We refer $F_1$ and $F_2$ to as the distant past and recent past terms respectively.
	
	\vspace{2mm}
	
	\noindent \fbox{\bf The distant past} 
	
	\vspace{2mm}
	
	To estimate $F_1$, we consider several cases.
	
	\vspace{2mm}
	
	$\boxed{d\geq 5}$
	
	\vspace{2mm}
	
	For $\theta>0$ small to be fixed later, we pick $(e,n)$ satisfying
	\[
	\frac{1}{k} =\frac{2-\gamc}{2e} + \frac{\theta \gamc}{2}, \quad \frac{1}{r} =\frac{2-\gamc}{2n} + \frac{d-4-8\theta}{2d} \gamc.
	\]
	As $\frac{4}{k}+\frac{d}{r} =\frac{4-b}{\alpha}$ and $\gamc=\frac{d\alpha-8+2b}{2\alpha}$, we infer that
	\[
	\frac{4}{e}+\frac{d}{n} = \frac{d}{2}
	\]
	and 
	\[
	e=\frac{(2-\gamc)k}{2-k\gamc \theta} \to \alpha+2 \text{ as } \theta \to 0.
	\]
	In particular, $(e,n)\in B$ provided that $\theta>0$ is taken sufficiently small. We estimate
	\[
	\|F_1\|_{L^k_t([t_1,+\infty), L^{r,2}_x)} \leq \|F_1\|^{\frac{2-\gamc}{2}}_{L^e_t([t_1,+\infty), L^{n,2}_x)} \|F_1\|^{\frac{\gamc}{2}}_{L^{\frac{1}{\theta}}_t([t_1,+\infty),L^{\frac{2d}{d-4-8\theta},2}_x)}.
	\]
	By Strichartz estimates with $(e,n)\in B$ and the fact that
	\[
	F_1(t) = U_\mu(t-t_0)u(t_0) - U_\mu(t-a)u(a).
	\]
	we have 
	\[
	\|F_1\|_{L^e_t([t_1,+\infty),L^{n,2}_x)} \leq C.
	\] 
	To estimate $\|F\|_{L^{\frac{1}{\theta}}_t([t_1,+\infty), L^{\frac{2d}{d-4-8\theta},2}_x)}$, we first use dispersive estimates in Lorentz spaces \eqref{dis-est-lore} to have for $t>t_1$,
	\begin{align*}
	\|F_1(t)\|_{L^{\frac{2d}{d-4-8\theta},2}_x} &\leq C\int_a^{t_0} (t-\tau)^{-1-2\theta} \|(|x|^{-b}|u|^\alpha u)(\tau)\|_{L^{\frac{2d}{d+4+8\theta},2}_x} d\tau \\
	&\leq C\int_a^{t_0} (t-\tau)^{-1-2\theta} \|u(\tau)\|^{\alpha+1}_{H^2_x} d\tau \\
	&\leq C\int_a^{t_0} (t-\tau)^{-1-2\theta} d\tau \\
	&= \frac{C}{2\theta} \left((t-t_0)^{-2\theta} - (t-a)^{-2\theta}\right) \\
	&\leq \frac{C}{2\theta} (t-t_0)^{-2\theta},
	\end{align*}
	where we have used \eqref{non-est-4} to get the second line. It follows that
	\begin{align*}
	\|F_1\|_{L^{\frac{1}{\theta}}_t([t_1,+\infty), L^{\frac{2d}{d-4-8\theta},2}_x)} &\leq \frac{C}{2\theta} \left(\int_{t_1}^{+\infty} (t-t_0)^{-2}dt\right)^\theta \\
	&\leq \frac{C}{2\theta} (t_1-t_0)^{-\theta} \\
	&\leq \frac{C}{2\theta}(\vareps T^{1-\varrho})^{-\theta}.
	\end{align*}
	In particular, we get
	\begin{align}\label{est-T-1}
	\|F_1\|_{L^k_t([t_1,+\infty),L^{r,2}_x)} \leq \left(\frac{C}{2\theta}(\vareps T^{1-\varrho})^{-\theta}\right)^{\frac{\gamc}{2}}.
	\end{align}
	
	$\boxed{d=3,4}$
	
	\vspace{2mm}
	
	We estimate
	\[
	\|F_1\|_{L^k_t([t_1,+\infty), L^{r,2}_x)} \leq \|F\|^{1-\nu}_{L^e_t([t_1,+\infty),L^{n,2}_x)} \|F\|^\nu_{L^{\frac{1}{\theta}}_t([t_1,+\infty),L^{\frac{1}{\theta},2}_x)},
	\]
	where
	\[
	\frac{1}{k}=\frac{1-\nu}{e} + \nu \theta, \quad \frac{1}{r} =\frac{1-\nu}{n} +\nu \theta, \quad \nu:=\frac{d\alpha-8+2b}{d\alpha-2(d+4)\alpha \theta}.
	\]
	Since $d=3,4$ and $\alpha>\frac{8-2b}{d}$, it is readily seen that
	\[
	\frac{4}{e}+\frac{d}{n}=\frac{d}{2}
	\]
	and $\nu \in (0,1)$ for $\theta>0$ small. Moreover, we have
	\[
	e=\frac{(1-\nu)k}{1-k\nu \theta} \to \frac{16(4-b)\alpha(\alpha+2)}{d\alpha(8-2b-(d-4)\alpha)} >2 \text{ as } \theta \to 0
	\]
	which implies $(e,n) \in B$ for $\theta>0$ small. In particular, Strichartz estimates give
	\[
	\|F_1\|_{L^e_t([t_1,+\infty), L^{n,2}_x)} \leq C.
	\] 
	To estimate $\|F_1\|_{L^{\frac{1}{\theta}}_t([t_1,+\infty),L^{\frac{1}{\theta},2}_x)}$, we consider separately two cases: $d=4$ and $d=3$.
	
	$\bullet$ For $d=4$, we use dispersive estimates in Lorentz spaces \eqref{dis-est-lore} and \eqref{non-est-5} to have for $t>t_1$,
	\begin{align*}
	\|F_1(t)\|_{L^{\frac{1}{\theta},2}_x} &\leq C\int_a^{t_0} (t-\tau)^{-1+2\theta} \|(|x|^{-b}|u|^\alpha u)(\tau)\|_{L^{\frac{1}{1-\theta},2}_x} d\tau \\
	&\leq C\int_a^{t_0} (t-\tau)^{-1+2\theta} \|u(\tau)\|^{\frac{\alpha}{2}}_{H^2_x} \left(\int_{\R^d} |x|^{-b}|u(\tau,x)|^{\alpha+2}dx \right)^{1/2} d\tau \\
	&\leq C \int_a^{t_0} (t-\tau)^{-1+2\theta} \left(\int_{\R^d} |x|^{-b}|u(\tau,x)|^{\alpha+2}dx \right)^{1/2} d\tau\\
	&\leq C \left(\int_a^{t_0}(t-\tau)^{-2+4\theta} d\tau\right)^{1/2} \left(\int_a^{t_0}\int_{\R^d} |x|^{-b}|u(\tau,x)|^{\alpha+2} dxd\tau\right)^{1/2} \\
	&\leq C (t-t_0)^{-\frac{1-4\theta}{2}} (t_0-a)^{\frac{\varrho}{2}} \\
	&\leq C T^{\frac{\varrho}{2}} (t-t_0)^{-\frac{1-4\theta}{2}},
	\end{align*}
	where we have used \eqref{space-time-est} and $|t_0-a| \sim T$. It follows that
	\begin{align*}
	\|F_1\|_{L^{\frac{1}{\theta}}_t([t_1,+\infty),L^{\frac{1}{\theta},2}_x)} &\leq CT^{\frac{\varrho}{2}} \left(\int_{t_1}^{+\infty}(t-t_0)^{-\frac{1-4\theta}{2\theta}} dt\right)^\theta \\
	&\leq C T^{\frac{\varrho}{2}} (t_1-t_0)^{-\frac{1-6\theta}{2}} \\
	&\leq C T^{\frac{\varrho}{2}} (\vareps T^{1-\varrho})^{-\frac{1-6\theta}{2}}\\
	&\leq C \left(\vareps T^{1-\varrho-\frac{\varrho}{1-6\theta}}\right)^{-\frac{1-6\theta}{2}}.
	\end{align*}
	In particular, we have
	\begin{align}\label{est-T-2}
	\|F_1\|_{L^k_t([t_1,+\infty),L^{r,2}_x)} \leq C \left(\vareps T^{1-\varrho-\frac{\varrho}{1-6\theta}}\right)^{-\frac{(1-6\theta)\nu}{2}}.
	\end{align}
	
	$\bullet$ For $d=3$, we use \eqref{non-est-6} to have for $t>t_1$,
	\begin{align*}
	\|F_1(t)\|_{L^{\frac{1}{\theta},2}_x} &\leq C\int_a^{t_0} (t-\tau)^{-\frac{3}{4}(1-2\theta)} \|(|x|^{-b}|u|^\alpha u)(\tau)\|_{L^{\frac{1}{1-\theta},2}_x} d\tau \\
	&\leq C\int_a^{t_0} (t-\tau)^{-\frac{3}{4}(1-2\theta)} \|u(\tau)\|^{\frac{3\alpha-2}{8}}_{H^2_x} \left(\int_{\R^d} |x|^{-b}|u(\tau,x)|^{\alpha+2}dx \right)^{5/8} d\tau \\
	&\leq C \int_a^{t_0} (t-\tau)^{-\frac{3}{4}(1-2\theta)} \left(\int_{\R^d} |x|^{-b}|u(\tau,x)|^{\alpha+2}dx \right)^{5/8} d\tau\\
	&\leq C \left(\int_a^{t_0}(t-\tau)^{-2+4\theta} d\tau\right)^{3/8} \left(\int_a^{t_0}\int_{\R^d} |x|^{-b}|u(\tau,x)|^{\alpha+2} dxd\tau\right)^{5/8} \\
	&\leq C (t-t_0)^{-\frac{3(1-4\theta)}{8}} (t_0-a)^{\frac{5\varrho}{8}} \\
	&\leq C T^{\frac{5\varrho}{8}} (t-t_0)^{-\frac{3(1-4\theta)}{8}}.
	\end{align*}
	Thus we get
	\begin{align*}
	\|F_1\|_{L^{\frac{1}{\theta}}_t([t_1,+\infty),L^{\frac{1}{\theta},2}_x)} &\leq CT^{\frac{5\varrho}{8}} \left(\int_{t_1}^{+\infty}(t-t_0)^{-\frac{3(1-4\theta)}{8\theta}} dt\right)^\theta \\
	&\leq C T^{\frac{5\varrho}{8}} (t_1-t_0)^{-\frac{3-20\theta}{8}} \\
	&\leq C T^{\frac{5\varrho}{8}} (\vareps T^{1-\varrho})^{-\frac{3-20\theta}{8}}\\
	&\leq C \left(\vareps T^{1-\varrho-\frac{5\varrho}{3-20\theta}}\right)^{-\frac{3-20\theta}{8}}.
	\end{align*}
	In particular, we have
	\begin{align}\label{est-T-3}
	\|F_1\|_{L^k_t([t_1,+\infty),L^{r,2}_x)} \leq C\left(\vareps T^{1-\varrho-\frac{5\varrho}{3-20\theta}}\right)^{-\frac{(3-20\theta)\nu}{8}}.
	\end{align}
	
	\vspace{2mm}
	
	\noindent \fbox{\bf The recent past} 
	
	\vspace{2mm}
	
	By the same argument as in the proof of Lemma \ref{lem-str-est-non-adm}, we have
	\begin{align*}
	\|F_2\|_{L^k_t([t_1,+\infty),L^{r,2}_x)} &\leq C\||x|^{-b}|u|^\alpha u\|_{L^{m'}_t([t_0,t_1], L^{r',2}_x)} \\
	&\leq C \||x|^{-b}|u|^\alpha u\|_{L^{m'}_t([t_0,t_1], L^{r'}_x)}
	\end{align*}
	as $r'<2$. For $\vartheta, \delta>1$ to be chosen later, we estimate
	\begin{align*}
	\||x|^{-b}|u|^\alpha u\|_{L^{r'}_x(A)} \leq \||x|^{-\frac{b}{m'}} |u|^{\frac{\alpha+2}{m'}}\|_{L^{m'}_x} \||x|^{-b\left(1-\frac{1}{m'}\right)}\|_{L^\vartheta_x(A)} \||u|^{\alpha+1-\frac{\alpha+2}{m'}}\|_{L^\delta_x},
	\end{align*}
	where $A$ is either the unit ball $B(0,1)$ or its complement $B^c(0,1)$. We pick 
	\begin{align}\label{choi-vartheta}
	\frac{1}{\vartheta} = \frac{b\left(1-\frac{1}{m'}\right)}{d} \pm \theta
	\end{align}
	with the plus sign for $A=B(0,1)$ and the minus one for $A=B^c(0,1)$. This ensures that 
	\[
	\||x|^{-b\left(1-\frac{1}{m'}\right)}\|_{L^\vartheta_x(A)} <\infty.
	\] 
	Thus we have
	\[
	\||x|^{-b}|u|^\alpha u\|_{L^{r'}_x(A)} \leq \||x|^{-b} |u|^{\alpha+2}\|^{\frac{1}{m'}}_{L^1_x} \|u\|^{\alpha+1-\frac{\alpha+2}{m'}}_{L^{\delta\left(\alpha+1-\frac{\alpha+2}{m'}\right)}_x}
	\]
	hence
	\[
	\||x|^{-b}|u|^\alpha u\|_{L^{m'}_t([t_0,t_1],L^{r'}_x)} \leq C \left(\int_{t_0}^{t_1}\int_{\R^d} |x|^{-b} |u(t,x)|^{\alpha+2} dxt\right)^{\frac{1}{m'}} \|u\|^{\alpha+1-\frac{\alpha+2}{m'}}_{L^\infty_t([t_0,t_1],L^{\delta\left(\alpha+1-\frac{\alpha+2}{m'}\right)}_x)}.
	\]
	With the choice of $\vartheta$ in \eqref{choi-vartheta}, we have
	\[
	\frac{1}{\delta} = \frac{1}{r'}-\frac{1}{m'} - \frac{1}{\vartheta} = \frac{1}{m}-\frac{1}{r} - \frac{b}{dm} \mp \theta
	\]
	hence
	\[
	\delta\left(\alpha+1-\frac{\alpha+2}{m'}\right) = \frac{\frac{\alpha+2}{m}-1}{\frac{1}{m}-\frac{1}{r} - \frac{b}{dm} \mp \theta} \to \frac{d(\alpha+2)}{d-b}.
	\]
	As $\alpha>\frac{8-2b}{d}$ and $\alpha<\frac{8-2b}{d-4}$ if $d\geq 5$, the above limit satisfies $\frac{d(\alpha+2)}{d-b}>2$ and $\frac{d(\alpha+2)}{d-b}<\frac{2d}{d-4}$ if $d\geq 5$. By choosing $\theta>0$ sufficiently small, we can use Sobolev embedding and the $H^2$-boundedness \eqref{boun-solu} to get
	\[
	\|u\|_{L^\infty_t([t_0,t_1],L^{\delta\left(\alpha+1-\frac{\alpha+2}{m'}\right)}_x)} \leq C.
	\]
	Using \eqref{est-t0}, we obtain
	\begin{align}
	\|F_2\|_{L^{k}_t([t_1,+\infty),L^{r,2}_x)} \leq C \left(\int_{t_0}^{t_1}\int_{\R^d} |x|^{-b} |u(t,x)|^{\alpha+2} dxt\right)^{\frac{1}{m'}} \leq C \vareps^{\frac{1}{m'}}. \label{est-T-4}
	\end{align}
	
	Collecting \eqref{est-T-1}, \eqref{est-T-2}, \eqref{est-T-3}, and \eqref{est-T-4}, we prove \eqref{est-T} with $\gamma=\frac{1}{m'}$ provided that $T=T(\vareps)>0$ is taken sufficiently large so that
	\begin{align}\label{choi-T}
	\left\{
	\begin{array}{ll}
	(\vareps T^{1-\varrho})^{-\frac{\theta \gamc}{2}} <\vareps^\gamma &\text{if } d \geq 5, \\
	\left(\vareps T^{1-\varrho-\frac{\varrho}{1-6\theta}}\right)^{-\frac{(1-6\theta)\nu}{2}} <\vareps^\gamma &\text{if } d=4, \\
	\left(\vareps T^{1-\varrho-\frac{5\varrho}{3-20\theta}}\right)^{-\frac{(3-20\theta)\nu}{8}} <\vareps^\gamma &\text{if } d=3.
	\end{array}
	\right.
	\end{align}
	For general solutions, we have $\varrho=\frac{1}{1+\min\{2,b\}}$ with $0<b<\frac{d}{2}$.  When $d\geq 5$, as $\varrho<1$, we can take $\theta>0$ small and then choose $T=T(\vareps)$ sufficiently large so that \eqref{choi-T} holds. When $d=4$ and $1<b<2$, we have $\varrho=\frac{1}{1+b}<\frac{1}{2}$. Thus we can take $\theta>0$ small enough so that $1-\varrho-\frac{\varrho}{1-6\theta}>0$. Then \eqref{choi-T} is fulfilled if $T = T(\vareps)$ is taken large enough.
	
	For radial solutions, we have $\varrho=\frac{1}{3}$. When $d=4$ and $0<b\leq 1$, we have $1-\varrho-\frac{\varrho}{1-6\theta} >0$ provided that $\theta>0$ is small. When $d=3$ and $0<b<\frac{3}{2}$, we have $1-\varrho-\frac{5\varrho}{3-20\theta}>0$ by taking $\theta>0$ small. We are then able to choose $T=T(\vareps)$ large so that \eqref{choi-T} holds.  
	
	The proof of Theorem \ref{theo-scat} is now complete.
\end{proof}

\begin{proof}[Proof of Proposition \ref{prop-scat-defo}]
	The proof is similar (even simpler) to the focusing nonlinearity. First, due to the defocusing nature, $H^2$-solutions exist globally in time and satisfy
	\[
	\sup_{t\in \R} \|u(t)\|_{H^2_x} \leq E
	\] 
	for some constant $E>0$ depending on mass and energy. Second, by performing localized virial estimates as in Step 3 of the proof of Theorem \ref{theo-gwp} and using the trivial bound
	\[
	G_\mu(\chi_Ru(t)) \geq C\int |x|^{-b}|\chi_R(x) u(t,x)|^{\alpha+2} dx, \quad \forall t\in \R,
	\]
	we prove the same space-time estimates as in \eqref{space-time-est-non-rad} and \eqref{space-time-est-rad}. Finally, the energy scattering follows by the same argument as in the proof of Theorem \ref{theo-scat}. 		 
\end{proof}

\section*{Acknowledgments}
V. D. D. was supported in part by the European Union's Horizon 2020 Research and Innovation Programme (Grant agreement CORFRONMAT No. 758620, PI: Nicolas Rougerie). S. K. was supported by Labex CEMPI grant (ANR-11-LABX-0007-01) and ANR grant ODA (ANR-18-CE40- 0020-01). 

\appendix

\section{Equivalent thresholds} \label{appendix}
Let $\mu=0$, $\omega>0$, and $Q_\omega$ be a minimizer of $m_{0,\omega}$. We introduce 
\[
\Ac^+_{0,\omega}:= \left\{f\in H^2(\R^d): S_{0,\omega}(f)<m_{0,\omega}, G_0(f)\geq 0\right\}
\]
and
\[
\Bc^+:= \left\{f \in H^2(\R^d) \left|  \begin{array}{lcl}
E_0(f) (M(f))^{\sigc} &<& E_0(Q_1) (M(Q_1))^{\sigc} \\
\|\Delta f\|_{L^2}\|f\|^{\sigc}_{L^2} &<& \|\Delta Q_1\|_{L^2} \|Q_1\|^{\sigc}_{L^2}
\end{array}
\right.
\right\}
\]
with $\sigc:=\frac{2-\gamc}{\gamc} = \frac{8-2b-(d-4)\alpha}{d\alpha-8+2b}$. The main purpose of this appendix is to prove the following equivalent threshold.

\begin{proposition}[\bf Threshold equivalence] \label{prop-equi-thre} ~\\
	Let $d\geq 1$, $\mu=0$, $0<b<\min\{d,4\}$, $\alpha>\frac{8-2b}{d}$, $\alpha<\frac{8-2b}{d-4}$, and $\omega>0$. Then we have 
	\[
	\bigcup_{\omega>0} \Ac^+_{0,\omega} = \Bc^+.
	\]
\end{proposition}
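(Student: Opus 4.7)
The plan is to reduce both inclusions to a one-variable dichotomy in the two $\dot H^{\gamc}$-scale-invariants $E_0(f)M(f)^{\sigc}$ and $K(f):=\|\Delta f\|_{L^2}\|f\|_{L^2}^{\sigc}$, both of which are invariant under the $\dot H^{\gamc}$-scaling $f_\lambda(x):=\lambda^{(4-b)/\alpha}f(\lambda x)$. First I would collect the scaling identities for minimizers. By Proposition \ref{prop-mini-prob}, any minimizer $Q_\omega$ of $m_{0,\omega}$ satisfies $G_0(Q_\omega)=0$ together with \eqref{elli-equa}; pairing \eqref{elli-equa} with $Q_\omega$ and combining with $G_0(Q_\omega)=0$ yields
\begin{equation*}
\int|x|^{-b}|Q_\omega|^{\alpha+2}dx=\tfrac{4(\alpha+2)}{d\alpha+2b}\|\Delta Q_\omega\|_{L^2}^2,\qquad \omega M(Q_\omega)=2\sigc E_0(Q_\omega),
\end{equation*}
whence $E_0(Q_\omega)=\frac{d\alpha+2b-8}{2(d\alpha+2b)}\|\Delta Q_\omega\|_{L^2}^2$. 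Since $\omega^{(4-b)/(4\alpha)}Q_1(\omega^{1/4}\cdot)$ solves \eqref{elli-equa} at frequency $\omega$, applying \eqref{chara-m-omega} in both directions gives $m_{0,\omega}=\omega^{(2-\gamc)/2}m_{0,1}$; by the invariance of $E_0M^{\sigc}$ and $K$ along the $\dot H^{\gamc}$-orbit of $Q_1$, these take the common values $E_0(Q_1)M(Q_1)^{\sigc}$ and $K_0:=K(Q_1)$ on the whole family $\{Q_\omega\}_{\omega>0}$.

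Next I would set up the one-variable envelope. With $p:=(d\alpha+2b)/8>1$, let $C_{\mathrm{GN}}$ be the sharp constant in the Gagliardo--Nirenberg inequality $\int|x|^{-b}|f|^{\alpha+2}dx\le C_{\mathrm{GN}}\|\Delta f\|_{L^2}^{(d\alpha+2b)/4}\|f\|_{L^2}^{(8-2b-(d-4)\alpha)/4}$, set $c_1:=C_{\mathrm{GN}}/(\alpha+2)$, and define $\Phi(K):=\frac{1}{2}K^2-c_1K^{2p}$. Multiplying the GN inequality by $M(f)^{\sigc}$ and using the elementary identity $(2p-2)\sigc=(8-2b-(d-4)\alpha)/4$ gives the envelope bound $E_0(f)M(f)^{\sigc}\ge \Phi(K(f))$, while the algebraic identity
\begin{equation*}
G_0(f)=\tfrac{d\alpha+2b}{2}E_0(f)-\tfrac{d\alpha+2b-8}{4}\|\Delta f\|_{L^2}^2
\end{equation*}
shows that $G_0(f)\ge0$ is equivalent to $E_0(f)M(f)^{\sigc}\ge\frac{p-1}{2p}K(f)^2$. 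A one-variable computation locates the unique positive critical point of $\Phi$ at some $K_\ast$, with $\Phi(K_\ast)=\frac{p-1}{2p}K_\ast^2$; the identification $K_\ast=K_0$, which yields $\Phi(K_0)=E_0(Q_1)M(Q_1)^{\sigc}$, is the main technical point and can be established by noting that any GN extremizer solves (after normalization) a scaled version of \eqref{elli-equa} and so, by Proposition \ref{prop-mini-prob}, shares all $\dot H^{\gamc}$-invariants with $Q_1$.

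With these tools in hand, the forward inclusion $\bigcup_{\omega>0}\Ac^+_{0,\omega}\subseteq \Bc^+$ goes as follows. For a nonzero $f\in\Ac^+_{0,\omega_0}$, the reformulation of $G_0(f)\ge0$ forces $E_0(f)>0$, so I can minimize $\omega\mapsto S_{0,\omega}(f)/m_{0,\omega}=\omega^{-(2-\gamc)/2}m_{0,1}^{-1}(E_0(f)+\frac{\omega}{2}M(f))$ over $\omega>0$. A direct calculation identifies the unique minimizer at $\omega^\ast=2\sigc E_0(f)/M(f)$ and evaluates the minimum value as $\bigl[E_0(f)M(f)^{\sigc}/(E_0(Q_1)M(Q_1)^{\sigc})\bigr]^{\gamc/2}$; since $S_{0,\omega_0}(f)<m_{0,\omega_0}$, this minimum is strictly less than $1$, yielding condition (a) of $\Bc^+$. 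Combined with the reformulation of $G_0(f)\ge0$, this gives $\frac{p-1}{2p}K(f)^2\le E_0(f)M(f)^{\sigc}<\frac{p-1}{2p}K_0^2$, hence condition (b).

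For the reverse inclusion $\Bc^+\subseteq\bigcup_{\omega>0}\Ac^+_{0,\omega}$, starting from nonzero $f\in\Bc^+$, condition (b) combined with the factorization $\Phi(K)-\frac{p-1}{2p}K^2=\frac{K^2}{2p}\bigl(1-(K/K_0)^{2p-2}\bigr)$ gives $\Phi(K(f))>\frac{p-1}{2p}K(f)^2$, and the envelope bound then yields $E_0(f)M(f)^{\sigc}>\frac{p-1}{2p}K(f)^2$, i.e., $G_0(f)>0$ and in particular $E_0(f)>0$. Setting $\omega^\ast:=2\sigc E_0(f)/M(f)$ and reusing the minimization formula from the forward direction, $S_{0,\omega^\ast}(f)/m_{0,\omega^\ast}=\bigl[E_0(f)M(f)^{\sigc}/(E_0(Q_1)M(Q_1)^{\sigc})\bigr]^{\gamc/2}<1$ precisely by condition (a), so $f\in\Ac^+_{0,\omega^\ast}$. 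The case $f\equiv0$ is immediate from the definitions since $m_{0,\omega}>0$ for every $\omega>0$.
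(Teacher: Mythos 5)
Your proof is correct and follows essentially the same Hamano-style strategy as the paper's: optimize the action threshold over $\omega$ to convert the $S_{0,\omega}(f)<m_{0,\omega}$ condition into the scale-invariant bound $E_0(f)M(f)^{\sigma_c}<E_0(Q_1)M(Q_1)^{\sigma_c}$, then use that $Q_1$ attains the sharp Gagliardo--Nirenberg constant (the paper's Lemma~\ref{lem-Q1}, which is exactly your ``main technical point'') to translate the sign of $G_0(f)$ into the inequality $K(f)<K_0$. The paper maximizes $F(\omega)=m_{0,\omega}-S_{0,\omega}(f)$ and works with the pair of functions $G(\lambda)$ and $H(\lambda)=\lambda G'(\lambda)$, whereas you minimize the ratio $S_{0,\omega}(f)/m_{0,\omega}$ and work with the single envelope $\Phi$ and the level $\frac{p-1}{2p}K^2$; these are cosmetic reparametrizations of the same argument, and your closed-form expression $\bigl[E_0(f)M(f)^{\sigma_c}/(E_0(Q_1)M(Q_1)^{\sigma_c})\bigr]^{\gamma_c/2}$ for the minimal ratio is a nice tidying of the paper's chain of equivalences.
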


Before proving this result, we need some preparations.

\begin{lemma}[\bf Gagliardo--Nirenberg inequality] \label{lem-GN-ineq} ~\\
	Let $d\geq 1$, $0<b<\min\{d,4\}$, $\alpha>0$, and $\alpha<\frac{8-2b}{d-4}$ if $d\geq 5$. Then the following sharp Gagliardo--Nirenberg inequality holds true
	\begin{align} \label{GN-ineq}
	\int_{\R^d} |x|^{-b} |f(x)|^{\alpha+2}dx \leq C_{\opt} \|\Delta f\|^{\frac{d\alpha+2b}{4}}_{L^2} \|f\|^{\frac{8-2b-(d-4)\alpha}{4}}_{L^2}, \quad \forall f\in H^2(\R^d).
	\end{align}
	The optimal constant is obtained by a non-trivial solution to 
	\begin{align}\label{elli-equa-0}
	\Delta^2 f+ f -|x|^{-b}|f|^\alpha f=0.
	\end{align}
\end{lemma}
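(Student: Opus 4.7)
The plan is to prove the inequality first with a finite (not necessarily optimal) constant by interpolation, then extract the sharp constant as the extremum of a scale-invariant Weinstein-type ratio, and identify the extremizer with a solution of the elliptic equation \eqref{elli-equa-0}.

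For the non-optimal bound, I would apply H\"older in Lorentz spaces with $|x|^{-b}\in L^{d/b,\infty}(\R^d)$ to get
\[
\int_{\R^d}|x|^{-b}|f(x)|^{\alpha+2}\,dx \lesssim \|f\|_{L^{q,2}}^{\alpha+2}, \qquad q := \frac{d(\alpha+2)}{d-b}.
\]
The hypotheses $0<b<\min\{d,4\}$ and $\alpha<(8-2b)/(d-4)$ if $d\geq 5$ place $q$ within the Sobolev embedding range, so $\|f\|_{L^{q,2}}\lesssim \|\Delta f\|_{L^2}^{\theta}\|f\|_{L^2}^{1-\theta}$ by Sobolev embedding combined with $L^2$-interpolation in Lorentz spaces, with $\theta = (d\alpha+2b)/(4(\alpha+2))$. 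Raising to the power $\alpha+2$ yields the desired homogeneous form with the correct exponents, which are moreover forced by scaling.

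For the sharp constant, I would consider the scale-invariant ratio
\[
J(f) := \frac{\|\Delta f\|_{L^2}^{(d\alpha+2b)/4}\,\|f\|_{L^2}^{(8-2b-(d-4)\alpha)/4}}{\int_{\R^d}|x|^{-b}|f(x)|^{\alpha+2}\,dx}
\]
and take a minimizing sequence $(f_n)\subset H^2(\R^d)\setminus\{0\}$. Using the two-parameter scaling $f(x)\mapsto \mu f(\lambda x)$, under which $J$ is invariant, I would normalize $\|f_n\|_{L^2}=\|\Delta f_n\|_{L^2}=1$, so that $(f_n)$ is bounded in $H^2(\R^d)$. The decisive ingredient is the compact embedding $H^2(\R^d)\hookrightarrow L^{\alpha+2}(|x|^{-b}\,dx)$, which is exactly what is established in the first part of the proof of Proposition \ref{prop-mini-prob} under the present assumptions. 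It furnishes a nontrivial weak limit $f_\ast$ with $\int|x|^{-b}|f_\ast|^{\alpha+2}\,dx>0$, and weak lower semicontinuity of the $H^2$-seminorms then forces $f_\ast$ to be a minimizer of $J$.

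Finally, the Lagrange multiplier theorem applied at $f_\ast$ yields
\[
A\,\Delta^2 f_\ast + B\, f_\ast - C\,|x|^{-b}|f_\ast|^\alpha f_\ast = 0
\]
with constants $A,B,C>0$ read off from $\|\Delta f_\ast\|_{L^2}$, $\|f_\ast\|_{L^2}$, $\int|x|^{-b}|f_\ast|^{\alpha+2}\,dx$ (strict positivity of $B$ is guaranteed by $8-2b-(d-4)\alpha>0$). Setting $Q(x):=\mu f_\ast(\lambda x)$ with $\lambda=(A/B)^{1/4}$ and $\mu>0$ chosen so as to absorb $C/B$, I obtain $\Delta^2 Q + Q - |x|^{-b}|Q|^\alpha Q = 0$, while the scale-invariance of $J$ preserves the extremal property of $Q$, delivering both the sharp constant $C_{\opt}=1/J(Q)$ and the desired elliptic equation. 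I expect the compactness step to be the main obstacle, but it has already been carried out in the proof of Proposition \ref{prop-mini-prob} by splitting into $\{|x|\leq R\}$ (Rellich--Kondrachov combined with local H\"older integrability of $|x|^{-b}$, valid since $b<d$) and $\{|x|>R\}$ (uniform bound $|x|^{-b}\leq R^{-b}$ together with the $H^2$-embedding into $L^{\alpha+2}$).
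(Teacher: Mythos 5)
Your proposal follows essentially the same route as the paper: normalize via the two-parameter scaling, invoke the compact embedding from the proof of Proposition~\ref{prop-mini-prob} to extract a nontrivial weak limit, use weak lower semicontinuity to conclude it is an extremizer, and then derive the Euler--Lagrange equation and rescale to reach \eqref{elli-equa-0}. The only addition is your explicit preliminary derivation of the non-sharp bound via H\"older and Sobolev embedding in Lorentz spaces, which the paper leaves implicit; this is a welcome detail but not a different method.
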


\begin{proof}[Proof of Lemma \ref{lem-GN-ineq}]
	Due to the appearance of an biharmonic operator, the method using the Schwarz symmetrization to reduce the problem to the radial setting as in \cite{Farah} does not work. We proceed differently. Denote
	\[
	W(f):= \int |x|^{-b} |f(x)|^{\alpha+2} dx \div \left(\|\Delta f\|^{\frac{d\alpha+2b}{4}}_{L^2} \|f\|^{\frac{8-2b-(d-4)\alpha}{4}}_{L^2} \right)
	\]
	the Weinstein functional. In particular, we have
	\[
	C_{\opt} = \sup \left\{W(f) : f\in H^2(\R^d) \backslash \{0\}\right\}>0.
	\]
	Let $(f_n)_n$ be an optimizing sequence for $C_{\opt}$. As $W(f)$ is invariant under the scaling $f_{\lambda, \zeta}(x)=\lambda f(\zeta x)$ for all $\lambda, \zeta>0$, we can assume that
	\begin{align} \label{prop-fn}
	\|\Delta f_n\|_{L^2}=\|f_n\|_{L^2}=1, \quad \int|x|^{-b} |f_n(x)|^{\alpha+2} dx = W(f_n) \to C_{\opt}.
	\end{align}
	Since $(f_n)_n$ is bounded in $H^2(\R^d)$, the same argument as in the proof of Proposition \ref{prop-mini-prob} shows the existence of $g\in H^2(\R^d)$ such that up to a subsequence, $f_n \rightharpoonup g$ weakly in $H^2(\R^d)$ and
	\[
	\int |x|^{-b} |f_n(x)|^{\alpha+2} dx \to \int |x|^{-b} |g(x)|^{\alpha+2} dx \text{ as } n \to \infty.
	\] 
	Thanks to \eqref{prop-fn} and $C_{\opt}>0$, we have $g\ne 0$. By the weak convergence in $H^2(\R^d)$, we have
	\[
	C_{\opt} \geq W(g) \geq \liminf_{n\to \infty} W(f_n) = C_{\opt}.
	\]
	This shows that $g$ is an optimizer for $C_{\opt}$. In particular, we have
	\[
	\left.\frac{d}{d\vareps}\right|_{\vareps=0} W(g+\vareps \chi) =0, \quad \forall \chi \in C^\infty_0(\R^d).
	\] 
	A direct computation shows
	\[
	\rea \int \( A \Delta^2 g + B g- C |x|^{-b} |g|^\alpha g \) \overline{\chi} dx =0,
	\]
	where
	\[
	A= \frac{(d\alpha+2b) C_{\opt}}{2\|\Delta g\|_{L^2}}, \quad B=\frac{(8-2b-(d-4)\alpha) C_{\opt}}{2\|g\|_{L^2}}, \quad C= \frac{\alpha+2}{\|\Delta g\|^{\frac{d\alpha+2b}{4}}_{L^2}\|g\|^{\frac{8-2b-(d-4)\alpha}{4}}_{L^2}}.
	\]
	Replacing $\chi$ by $i\chi$, we have the same identity with the imaginary part instead of the real part. In particular, we have
	\[
	A \Delta^2 g + B g- C |x|^{-b} |g|^\alpha g =0
	\]
	in the weak sense. By a change of variable $g(x)=\lambda f(\zeta x)$ with 
	\[
	\zeta^4=\frac{B}{A}, \quad \lambda^\alpha = \frac{B^{\frac{4-b}{4}} A^{\frac{b}{4}}}{C},
	\]
	we see that $C_{\opt}= W(f)$ and $f$ is a non-trivial solution to \eqref{elli-equa-0}. The proof is complete.
\end{proof}

\begin{lemma} \label{lem-Q1}
	Let $Q_1$ be a minimizer for $m_{0,1}$. Then $Q_1$ is an optimizer for \eqref{GN-ineq}. 
\end{lemma}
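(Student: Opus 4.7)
\medskip

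\textbf{Proof proposal for Lemma \ref{lem-Q1}.}

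My plan is to show both $W(Q_1)$ and the Weinstein optimum $C_{\opt}$ can be characterized by a single scalar (namely $\|\Delta f\|_{L^2}^2$) when restricted to non-trivial solutions of the elliptic equation \eqref{elli-equa-0}, and then use the variational characterization \eqref{chara-m-omega} of $m_{0,1}$ to conclude.

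\textbf{Step 1 (Pohozaev-type identities on the constraint set).} Let $f\in H^2(\R^d)\setminus\{0\}$ be any solution of \eqref{elli-equa-0}. Multiplying the equation by $\overline{f}$ and integrating I get
\begin{equation*}
\|\Delta f\|_{L^2}^2 + \|f\|_{L^2}^2 = \int |x|^{-b}|f|^{\alpha+2}\,dx.
\end{equation*}
Exactly as in the derivation of \eqref{equ-1}--\eqref{equ-2} within the proof of Proposition \ref{prop-mini-prob}, pairing \eqref{elli-equa-0} with $x\cdot\nabla\overline{f}$ also yields $G_0(f)=0$, i.e.
\begin{equation*}
2\|\Delta f\|_{L^2}^2 = \frac{d\alpha+2b}{2(\alpha+2)}\int |x|^{-b}|f|^{\alpha+2}\,dx.
\end{equation*}
Eliminating the potential term between these two relations gives explicit expressions
\begin{equation*}
\int |x|^{-b}|f|^{\alpha+2}\,dx = \frac{4(\alpha+2)}{d\alpha+2b}\|\Delta f\|_{L^2}^2,\qquad \|f\|_{L^2}^2 = \frac{8-2b-(d-4)\alpha}{d\alpha+2b}\|\Delta f\|_{L^2}^2.
\end{equation*}

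\textbf{Step 2 (Reduction of $W$ and $S_{0,1}$ on solutions).} Plugging the two identities of Step 1 into the Weinstein functional $W$ from the proof of Lemma \ref{lem-GN-ineq}, and checking that the total power of $\|\Delta f\|_{L^2}$ becomes $2-\tfrac{(d\alpha+2b)+(8-2b-(d-4)\alpha)}{4}=-\alpha$, I obtain for every non-trivial solution $f$ of \eqref{elli-equa-0}
\begin{equation*}
W(f) = K\,\|\Delta f\|_{L^2}^{-\alpha},
\end{equation*}
with $K=K(d,\alpha,b)>0$ independent of $f$. Similarly, using $\|\Delta f\|^2+\|f\|^2=\tfrac{4(\alpha+2)}{d\alpha+2b}\|\Delta f\|^2$, one computes
\begin{equation*}
S_{0,1}(f) = \frac{2\alpha}{d\alpha+2b}\|\Delta f\|_{L^2}^{2}.
\end{equation*}
Thus on the set of non-trivial solutions of \eqref{elli-equa-0}, both quantities are monotone functions of the single scalar $\|\Delta f\|_{L^2}^2$: smaller $\|\Delta f\|_{L^2}$ means smaller $S_{0,1}$ and larger $W$.

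\textbf{Step 3 (Conclusion via the characterization of $m_{0,1}$).} By Lemma \ref{lem-GN-ineq}, there exists an optimizer $f^\ast\in H^2(\R^d)\setminus\{0\}$ of \eqref{GN-ineq} which is a non-trivial solution of \eqref{elli-equa-0}; in particular $W(f^\ast)=C_{\opt}$. By the characterization \eqref{chara-m-omega} of $m_{0,1}$,
\begin{equation*}
S_{0,1}(f^\ast) \geq m_{0,1} = S_{0,1}(Q_1),
\end{equation*}
and since both $f^\ast$ and $Q_1$ are non-trivial solutions of \eqref{elli-equa-0}, Step 2 converts this into $\|\Delta Q_1\|_{L^2} \leq \|\Delta f^\ast\|_{L^2}$, hence
\begin{equation*}
W(Q_1) = K\|\Delta Q_1\|_{L^2}^{-\alpha} \geq K\|\Delta f^\ast\|_{L^2}^{-\alpha} = W(f^\ast) = C_{\opt}.
\end{equation*}
The reverse inequality $W(Q_1)\leq C_{\opt}$ is immediate from the definition of $C_{\opt}$, so $W(Q_1)=C_{\opt}$ and $Q_1$ is an optimizer of \eqref{GN-ineq}.

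\textbf{Main obstacle.} The routine part is the bookkeeping of exponents in Step 2; the only genuinely non-trivial ingredient is the availability of an optimizer of \eqref{GN-ineq} that solves \eqref{elli-equa-0}, which is precisely the content of the last assertion of Lemma \ref{lem-GN-ineq}. Without it one would have only $W(Q_1)\leq C_{\opt}$ with no way to match a minimizing sequence of $C_{\opt}$ to the constraint $G_0=0$.
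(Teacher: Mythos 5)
Your proof is correct and follows essentially the same strategy as the paper: derive the Pohozaev identities for solutions of \eqref{elli-equa-0}, use them to express both $W(f)$ and $S_{0,1}(f)$ as monotone functions of a single scalar, and then combine $W(Q_1)\le C_{\opt}=W(f^\ast)$ with $S_{0,1}(Q_1)=m_{0,1}\le S_{0,1}(f^\ast)$ from \eqref{chara-m-omega} to force equality. The only superficial difference is that you normalize everything by $\|\Delta f\|_{L^2}^2$ whereas the paper uses $\int|x|^{-b}|f|^{\alpha+2}\,dx$ as the reference scalar; the two are interchangeable via the Pohozaev identities.
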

\begin{proof}[Proof of Lemma \ref{lem-Q1}]
	By Lemma \ref{lem-GN-ineq}, there exists an optimizer $f$ for \eqref{GN-ineq} which is a non-trivial solution to \eqref{elli-equa-0}. We have the following Pohozaev identities
	\begin{align} \label{poho-iden}
	\|\Delta f\|^2_{L^2} =\frac{d\alpha+2b}{4(\alpha+2)} \int |x|^{-b}|f(x)|^{\alpha+2} dx = \frac{d\alpha+2b}{8-2b-(d-4)\alpha} \|f\|^2_{L^2}.
	\end{align}
	In particular, we have
	\[
	S_{0,1}(f) = \frac{\alpha}{2(\alpha+2)} \int |x|^{-b}|f(x)|^{\alpha+2} dx
	\]
	and
	\[
	W(f)= \(\frac{4(\alpha+2)}{d\alpha+2b}\)^{\frac{d\alpha+2b}{8}} \(\frac{4(\alpha+2)}{8-2b-(d-4)\alpha}\)^{\frac{8-2b-(d-4)\alpha}{8}} \(\int|x|^{-b}|f(x)|^{\alpha+2} dx\)^{-\frac{\alpha}{2}}.
	\]
	We have
	\begin{align*}
	W(Q_1) &\leq C_{\opt} =W(f)\\
	&= \(\frac{4(\alpha+2)}{d\alpha+2b}\)^{\frac{d\alpha+2b}{8}} \(\frac{4(\alpha+2)}{8-2b-(d-4)\alpha}\)^{\frac{8-2b-(d-4)\alpha}{8}} \(\frac{\alpha}{2(\alpha+2)}\)^{\frac{\alpha}{2}} \(S_{0,1}(f)\)^{-\frac{\alpha}{2}} \\
	&\leq \(\frac{4(\alpha+2)}{d\alpha+2b}\)^{\frac{d\alpha+2b}{8}} \(\frac{4(\alpha+2)}{8-2b-(d-4)\alpha}\)^{\frac{8-2b-(d-4)\alpha}{8}} \(\frac{\alpha}{2(\alpha+2)}\)^{\frac{\alpha}{2}} \(S_{0,1}(Q_1)\)^{-\frac{\alpha}{2}} \\
	&=W(Q_1).
	\end{align*}
	In the third line, we have used \eqref{chara-m-omega} to get
	\[
	S_{0,1}(f) \geq m_{0,1} =S_{0,1}(Q_1).
	\]
	In the last line, we use the fact that $Q_1$ is a solution to \eqref{elli-equa-0}. This shows that $Q_1$ is an optimizer for \eqref{GN-ineq}. 
\end{proof}

\begin{remark}
	In the definition of $\Bc^+$, the terms $E_0(Q_1) (M(Q_1))^{\sigc}$ and $\|\Delta Q_1\|_{L^2} \|Q_1\|_{L^2}^{\sigc}$ are independent from the choice of $Q_1$. In fact, since $Q_1$ is a non-trivial solution to \eqref{elli-equa-0}, we can use the Pohozaev identities \eqref{poho-iden} to express $E_0(Q_1), M(Q_1)$, and $\|\Delta Q_1\|_{L^2}$ in terms of $S_{0,1}(Q_1)=m_{0,1}$. 
\end{remark}

We are now able to prove the equivalent threshold given in Proposition \ref{prop-equi-thre}.

\begin{proof}[Proof of Proposition \ref{prop-equi-thre}]
	We follow an idea of Hamano \cite[Appendix]{Hamano}. Let $Q_\omega$ be a minimizer for $m_{0,\omega}$ and set $Q_\omega(x) = \omega^{\frac{4-b}{4\alpha}} Q_1\(\omega^{\frac{1}{4}} x\)$. We observe that $Q_1$ is a minimizer for $m_{0,1}$. In fact, it follows directly from the fact that $S_{0,\omega}(Q_\omega) = \omega^{\frac{8-2b-(d-4)\alpha}{4\alpha}} S_{0,1}(Q_1)$, and $Q_\omega$ solves
	\[
	\Delta^2 f + \omega f - |x|^{-b}|f|^\alpha f=0
	\]
	if and only if $Q_1$ solves \eqref{elli-equa-0}.

	Now we consider
	\[
	F(\omega):= S_{0,\omega}(Q_\omega) - S_{0,\omega}(f) = \omega^{\frac{8-2b-(d-4)\alpha}{4\alpha}} S_{0,1}(Q_1) - \frac{\omega}{2}M(f)-E_0(f).
	\]
	We have
	\[
	F'(\omega) = \frac{8-2b-(d-4)\alpha}{4\alpha} \omega^{-\frac{d\alpha-8+2b}{4\alpha}} S_{0,1}(Q_1) - \frac{1}{2}M(f).
	\]
	As $\alpha>\frac{8-2b}{d}$ and $\alpha<\frac{8-2b}{d-4}$ if $d\geq 5$, the equation $F'(\omega)=0$ admits a unique positive solution
	\[
	\omega_0 = \(\frac{(8-2b-(d-4)\alpha)}{2\alpha} \frac{S_{0,1}(Q_1)}{M(f)}\)^{\frac{4\alpha}{d\alpha-8+2b}}
	\]
	and $F(\omega_0) = \max_{\omega>0} F(\omega)$. We see that
	\begin{align*}
	&\exists \omega>0 : F(\omega)>0 \\
	&\quad\Longleftrightarrow F(\omega_0)>0 \\
	&\quad\Longleftrightarrow \omega_0 \( \omega_0^{-\frac{d\alpha-8+2b}{4\alpha}} S_{0,1}(Q_1) -\frac{1}{2}M(f)\) -E_0(f) >0 \\
	&\quad\Longleftrightarrow \(\frac{(8-2b-(d-4)\alpha)}{2\alpha} \frac{S_{0,1}(Q_1)}{M(f)}\)^{\frac{4\alpha}{d\alpha-8+2b}} \frac{d\alpha-8+2b}{2(8-2b-(d-4)\alpha)} M(f)-E_0(f) >0 \\
	&\quad\Longleftrightarrow \frac{d\alpha-8+2b}{4\alpha} S_{0,1}(Q_1) \(\frac{8-2b-(d-4)\alpha}{2\alpha} S_{0,1}(Q_1)\)^{\sigc} > E_0(f)(M(f))^{\sigc} \\
	&\quad\Longleftrightarrow E_0(Q_1)(M(Q_1))^{\sigc} > E_0(f)(M(f))^{\sigc}.
	\end{align*}
	Here we have used \eqref{poho-iden} to get
	\begin{align*}
	E_0(Q_1) &=\frac{d\alpha-8+2b}{2(8-2b-(d-4)\alpha)} M(Q_1), \\ 
	S_{0,1}(Q_1) &= \frac{2\alpha}{8-2b-(d-4)\alpha}M(Q_1)=\frac{4\alpha}{d\alpha-8+2b}E_0(Q_1).
	\end{align*}
	Thus we have proved that
	\begin{align} \label{equi-thre-prof}
	\bigcup_{\omega>0} \{f \in H^2(\R^d): S_{0,\omega}(f)<m_{0,\omega}\} = \left\{f\in H^2(\R^d) : E_0(f)(M(f))^{\sigc}<E_0(Q_1)(M(Q_1))^{\sigc}\right\}.
	\end{align}
	Next, by \eqref{GN-ineq}, we have
	\begin{align*}
	E_0(f)(M(f))^{\sigc} &\geq \frac{1}{2} \(\|\Delta f\|_{L^2}\|f\|^{\sigc}_{L^2}\)^2 - \frac{C_{\opt}}{\alpha+2} \(\|\Delta f\|_{L^2}\|f\|^{\sigc}_{L^2}\)^{\frac{d\alpha+2b}{4}} \\
	&=: G\(\|\Delta f\|_{L^2}\|f\|^{\sigc}_{L^2}\),
	\end{align*}
	where $G(\lambda)= \frac{1}{2}\lambda^2 - \frac{C_{\opt}}{\alpha+2} \lambda^{\frac{d\alpha+2b}{4}}$. As $Q_1$ is an optimizer for the Gagliardo--Nirenberg inequality (see Lemma \ref{lem-Q1}), we infer from \eqref{poho-iden} that
	\begin{align*}
	C_{\opt} &= \int |x|^{-b}|Q_1(x)|^{\alpha+2} dx \div \( \|\Delta Q_1\|^{\frac{d\alpha+2b}{4}}_{L^2} \|Q_1\|^{\frac{8-2b-(d-4)\alpha}{4}}_{L^2}\) \\
	&= \frac{4(\alpha+2)}{d\alpha+2b} \(\|\Delta Q_1\|_{L^2} \|Q_1\|^{\sigc}_{L^2}\)^{-\frac{d\alpha-8+2b}{4}}.
	\end{align*}
	In particular, $G$ attains its maximum at 
	\[
	\lambda_0 = \|\Delta Q_1\|_{L^2}\|Q_1\|^{\sigc}_{L^2}.
	\]
	Similarly, we have
	\begin{align*}
	G_0(f) (M(f))^{\sigc} &\geq \(\|\Delta f\|_{L^2} \|f\|^{\sigc}_{L^2}\)^2 -\frac{(d\alpha+2b)C_{\opt}}{2(\alpha+2)} \(\|\Delta f\|_{L^2} \|f\|_{L^2}^{\sigc}\)^{\frac{d\alpha+2b}{4}}\\
	&=:H\(\|\Delta f\|_{L^2} \|f\|_{L^2}^{\sigc}\), 
	\end{align*}
	where $H(\lambda)=\lambda^2 - \frac{(d\alpha+2b)C_{\opt}}{2(\alpha+2)} \lambda^{\frac{d\alpha+2b}{4}}$. Observe that $H(\lambda)=\lambda G'(\lambda)$, hence
	\[
	H(\lambda) \left\{
	\begin{array}{cl}
	> 0 &\text{if } 0< \lambda<\lambda_0, \\
	=0 &\text{if } \lambda=\lambda_0, \\
	<0 &\text{if } \lambda>\lambda_0.
	\end{array}
	\right.
	\]
	
	If $f\in \Bc^+$, then we have $\lambda:=\|\Delta f\|_{L^2} \|f\|_{L^2}^{\sigc}<\|\Delta Q_1\|_{L^2} \|Q_1\|_{L^2}^{\sigc}=\lambda_0$. We infer that $H(\lambda)\geq 0$ or $G_\mu(f) \geq 0$. This combined with \eqref{equi-thre-prof} show that $f\in \bigcup_{\omega>0} \Ac^+_{0,\omega}$.
	
	If $f\in \Ac^+_{0,\omega}$ for some $\omega>0$, then we have $G_0(f) \geq 0$. It follows that 
	\[
	0\leq G_0(f) (M(f))^{\sigc}= \frac{d\alpha+2b}{4} E_0(f) (M(f))^{\sigc} - \frac{d\alpha-8+2b}{4} \(\|\Delta f\|_{L^2}\|f\|_{L^2}^{\sigc}\)^2
	\]
	which implies
	\begin{align*}
	\frac{d\alpha-8+2b}{4} \(\|\Delta f\|_{L^2}\|f\|_{L^2}^{\sigc}\)^2 &= \frac{d\alpha+2b}{4} E_0(f) (M(f))^{\sigc} - G_0(f) (M(f))^{\sigc} \\
	&\leq \frac{d\alpha+2b}{4} E_0(f)(M(f))^{\sigc} \\
	&< \frac{d\alpha+2b}{4} E_0(Q_1)(M(Q_1))^{\sigc} \\
	&= \frac{d\alpha-8+2b}{4} \(\|\Delta Q_1\|_{L^2} \|Q_1\|_{L^2}^{\sigc}\)^2.  
	\end{align*}
	This shows that
	\[
	\|\Delta f\|_{L^2}\|f\|^{\sigc}_{L^2} < \|\Delta Q_1\|_{L^2} \|Q_1\|_{L^2}^{\sigc}
	\]
	which together with \eqref{equi-thre-prof} imply that $f\in \Bc^+$. The proof is complete.
\end{proof}



\end{document}